\documentclass[11pt]{amsart}

\usepackage{amssymb}
\usepackage{amsmath}
\usepackage{amsthm}
\usepackage{graphics}

\setlength{\paperwidth}{21.0cm}
\setlength{\hoffset}{-25.4mm}
\setlength{\oddsidemargin}{3cm}
\setlength{\evensidemargin}{3cm}
\setlength{\textwidth}{15cm}
\setlength{\paperheight}{29.7cm}
\setlength{\voffset}{-25.4mm}
\setlength{\topmargin}{2cm}
\setlength{\textheight}{24.0cm}

\allowdisplaybreaks

\numberwithin{equation}{section}

\theoremstyle{plain}
\newtheorem{thm}{Theorem}[section]
\newtheorem{prop}[thm]{Proposition}
\newtheorem{lem}[thm]{Lemma}

\theoremstyle{definition}

\newtheorem{rem}[thm]{Remark}

%
\newcommand{\set}[1]{\left\{#1\right\}}%
\newcommand{\abs}[1]{\left\vert#1\right\vert}%
\newcommand{\norm}[1]{\left\Vert{#1}\right\Vert}%
\newcommand{\one}{\mathrm{I}}%
\newcommand{\two}{\mathrm{II}}%
\newcommand{\three}{\mathrm{III}}%
\DeclareMathOperator{\supp}{\mathrm{supp}\,}

\begin{document}


\title[Multilinear Fourier Multipliers with Minimal Regularity, II]{Multilinear Fourier Multipliers with Minimal Sobolev Regularity, II}

\author[Grafakos]{Loukas Grafakos}
\address{Department of Mathematics, University of Missouri, Columbia, MO 65211, USA}
\email{grafakosl@missouri.edu}
\thanks{The first author would like to thank the Simons Foundation and the University of Missouri Research Council.}

\author[Miyachi]{Akihiko Miyachi}

\address{Akihiko Miyachi \\
Department of Mathematics \\
Tokyo Woman's Christian University \\
Zempukuji, Suginami-ku, Tokyo 167-8585, Japan}
\email{miyachi@lab.twcu.ac.jp}

\author[Nguyen]{Hanh Van Nguyen}
\address{Department of Mathematics, University of Missouri, Columbia, MO 65211, USA}
\email{hnc5b@mail.missouri.edu}
\thanks{The third author would like to thank Hue University - College of Education for their support.}

\author[Tomita]{Naohito Tomita}

\address{Naohito Tomita \\
Department of Mathematics \\
Osaka University \\
Toyonaka, Osaka 560-0043, Japan}
\email{tomita@math.sci.osaka-u.ac.jp}

\begin{abstract}
We  provide  characterizations for  boundedness of
multilinear Fourier   operators on   Hardy or Lebesgue spaces with symbols locally in Sobolev spaces.
Let $H^q(\mathbb R^n)$
 denote      the Hardy space when $0<q\le 1$ and the Lebesgue space $L^q(\mathbb R^n)$ when $1<q\le \infty$.
We find optimal conditions
 on $m$-linear Fourier multiplier operators to be bounded from  $H^{p_1}\times \cdots \times H^{p_m}$ to $L^p$ when $1/p=1/p_1+\cdots +1/p_m$  in terms of local $L^2$-Sobolev space estimates for the symbol of the operator.
Our conditions provide multilinear analogues of the linear   results  of
 Calder\'on and Torchinsky   \cite{CalTochin}
 and of   the bilinear results
of  Miyachi and Tomita \cite{MiTo}. The extension to general $m$ is
significantly more complicated  both technically and combinatorially; the optimal
Sobolev space smoothness   required of the symbol depends on the Hardy-Lebesgue
exponents and is constant on various convex simplices
formed by configurations of    $m2^{m-1} +1$ points  in $[0,\infty)^m$.
\end{abstract}

\maketitle

\section{Introduction}\label{section:Introduction}
We denote by $T_{\sigma}$ the linear Fourier multiplier operator, acting on
Schwartz functions $f$, defined  by
\begin{equation}\label{equ:LiMulOp}
T_{\sigma}(f)(x) = \int_{\mathbb R^{n}} {\sigma}(\xi) \widehat{f}(\xi)e^{2\pi ix\cdot \xi}\, d\xi,
\end{equation}
where $\sigma$ is a bounded function on $\mathbb R^{n}$
and $\widehat{f}(\xi)=\int_{\mathbb R^n} f(x) e^{-2\pi i x\cdot \xi} dx$ denotes the Fourier transform of $f$.
 H\"ormander \cite{Horman}   proved that $T_\sigma$ is bounded
 from $L^p(\mathbb R^n)$ to itself for $1<p<\infty$  if
 \begin{equation}\label{equ:gghhtt2}
\sup_{j\in\mathbb Z}
\Big\| {{\sigma}(2^{j} \cdot )
\widehat{\psi}}\,
\Big\| _{W^s}
<\infty
\end{equation}
for some $s>\frac{n}2,$ where $\widehat{\psi}$ is a smooth function supported in
$\frac12\le{\left\vert{\xi}\right\vert}\le 2$ that satisfies
$$
\sum_{j\in\mathbb Z}\widehat\psi(2^{-j}\xi) =1
$$
for all $\xi\ne 0.$ In this paper,  $W^s$ denotes the Sobolev space with norm
$$
\|g\|_{W^s} =\| (I-\Delta)^{s/2}g\|_{L^2},
$$
 where $I$ is the identity operator and $\Delta= \sum_{j=1}^n \partial_j^2$ is the Laplacian on $\mathbb R^n$.
 H\"ormander's result strengthens an earlier result
of Mikhlin \cite{Miklin}.

Throughout  this work, $H^p(\mathbb R^{n})$ denotes the real-variable Hardy space of Fefferman and Stein \cite{FS},  for $0<p \le \infty$.
This space  coincides with the Lebesgue space  $L^p(\mathbb R^{n})$ when $1<p\le \infty$.
Calder\'on and Torchinsky  \cite{CalTochin}  provided an extension  of H\"ormander's result   to $H^p(\mathbb R^{n})$ for $p\le 1$. They showed  that the   Fourier multiplier operator in \eqref{equ:LiMulOp}
admits a bounded extension from the Hardy space  $H^p(\mathbb R^{n})$ to $H^p(\mathbb R^{n})$ with  $0<p\le 1$ if
$$
\sup_{t>0}\left\Vert{{\sigma}(t\cdot)\widehat{\psi}}\, \right\Vert_{W^s}<\infty
$$
and $s>\frac{n}{p} - \frac{n}{2} $. Moreover,
the boundedness of $T_\sigma$ on $H^p$ may not hold if
$s< \frac{n}{p} - \frac{n}{2}$; in other words, the  Calder\'on and Torchinsky condition $s>\frac{n}{p} - \frac{n}{2} $ is sharp (for this, see for instance \cite[Remark 1.3]{MiTo}).

In this work we study   analogues of these results for multilinear multipliers defined on
products of Hardy or Lebesgue spaces on the entire range of indices $0<p\le \infty$.  Multilinear multiplier operators  were studied by  Coifman and Meyer
\cite{CM1}, \cite{CM2}, \cite{CoiMey} and more recently by
Grafakos and Torres \cite{GraTor}.
Multilinear Fourier multiplier is a bounded function
 $\sigma$ on $\mathbb R^{mn} = \mathbb R^n\times\cdots\times \mathbb R^n$
  associated with the $m$-linear Fourier multiplier operator
\begin{equation}\label{equ:TSigmaMul}
T_{\sigma}(f_1,\ldots,f_m)(x) = \int_{\mathbb R^{mn}}e^{2\pi ix\cdot (\xi_1+\cdots+\xi_m)}\sigma(\xi_1,\ldots,\xi_m)\widehat{f_1}(\xi_1)\cdots \widehat{f_m}(\xi_m)\, d\vec \xi ,
\end{equation}
where  $f_j$ are in the Schwartz space of $\mathbb R^n$ and $d\vec \xi =d\xi_1\cdots d\xi_m  $.

A short history of the known results concerning multilinear multipliers with minimal smoothness is as follows:
Tomita \cite{TomitaJFA}
obtained $L^{p_1}\times \cdots \times L^{p_m}\to L^p$   boundedness ($1<p_1,\ldots,p_m,p< \infty$) for   multilinear multiplier operators
under a condition     \eqref{equ:gghhtt2}.  Grafakos and Si \cite{GraSi} extended
Tomita's result to the case $p\le 1$   using   $L^r$-based Sobolev spaces with $1<r\le 2.$
Fujita and Tomita \cite{FuTomi} provided weighted  extensions of these results and also noticed that the Sobolev space $W^s$ in \eqref{equ:gghhtt2} can be
replaced by a product-type Sobolev space $W^{(s_1,\dots ,s_m)}$ when $p> 2$.
Grafakos, Miyachi, and Tomita \cite{GrMiTo} extended the range of $p$ in \cite{FuTomi}
to $p>0$ and obtained the boundedness even  in the endpoint case
where all but one indices $p_j$ are equal to infinity.
Miyachi and Tomita \cite{MiTo} provided  sharp conditions on the entire range of  indices ($0<p_j\le \infty$), extending    the Calder\'on and Torchinsky   \cite{CalTochin}  result   to the case $m=2$.

In this work we provide extensions of the result of
Calder\'on and Torchinsky \cite{CalTochin}  ($m=1$) and of
Miyachi and Tomita \cite{MiTo}  ($m=2$)
to the cases $m\ge 3$. We point out that the complexity of the problem increases significantly as $m$ increases. In fact, the main difficulty concerns the case where $1<p_j<2$, in which the
boundedness holds exactly in the interior of a convex simplex in $\mathbb R^m$. This simplex has
 $m2^{m-1}+1$ vertices   but  it is not enough to obtain the corresponding estimates for the vertices of the simplex, since interpolation between the vertices does not yield minimal smoothness in the interior. We overcome this difficulty by
 establishing
 estimates for all the points inside the simplex being arbitrarily close to those $m2^{m-1}+1$ points without losing   smoothness.

Before stating our  main result  we introduce some notation.
First, for $x\in \mathbb R^n$ we set $\left<x\right> = \sqrt{1+ |x| ^2}.$
For $s_1,\ldots,s_m>0,$ we denote by $W^{(s_1,\ldots,s_m)}$ the product-type-Sobolev space consisting of
all functions    $f$ on $\mathbb R^{mn}$ such that
\begin{equation*}
\left\Vert{f}\right\Vert_{W^{(s_1,\ldots,s_m)}}:=
\left(
\int_{\mathbb R^{mn}}
\left\vert \widehat{f}(y_1,\ldots,y_m)
\left<y_1\right>^{s_1}\cdots \left<y_m\right>^{s_m}
\right\vert^2 dy_1\cdots dy_m
\right)^{\frac12} <\infty.
\end{equation*}
Notice that $W^{(s_1,\ldots,s_m)}$ is a subspace of $L^2$.

We denote by $\psi$ a smooth function on $\mathbb R^{mn}$ whose Fourier transform $\widehat{\psi}$ is supported in $\frac12\le {\left\vert{\xi}\right\vert}\le 2$ and satisfies
\begin{equation*}
\sum_{j\in \mathbb Z}\widehat{\psi}(2^{-j}\xi) = 1,\qquad \xi\ne 0.
\end{equation*}

The following is the main result of this paper. It concerns boundedness of operators
of the form \eqref{equ:TSigmaMul} on products of  Hardy spaces in the full range of indices.

\begin{thm}\label{thm:General}
Let $0<p_1,\ldots,p_m\le \infty$, $0< p< \infty$,
$\frac{1}{p_1}+ \cdots + \frac{1}{p_m} = \frac{1}{p}$,
$s_1,\ldots,s_m> n/2$, and suppose
  \begin{equation}\label{equ:SIndexCond}
  \sum_{k\in J}\Big(\dfrac{s_k}{n}-\dfrac1{p_k}\Big)>-\dfrac12
  \end{equation}
for every nonempty subset $J\subset\left\{1,2,\ldots,m\right\}$.
If $\sigma$ satisfies
\begin{equation}\label{equ:SigmCond}
A:=\sup_{j\in\mathbb Z}
\left\Vert{\sigma(2^j\cdot)\widehat\psi}\,
\right\Vert_{W^{(s_1,\ldots,s_m)}}<\infty,
\end{equation}
then we have
\begin{equation}\label{equ:TSigmBound}
\norm{T_{\sigma}}_{H^{p_1}\times\cdots\times H^{p_m}
\longrightarrow  L^p}
\lesssim
A.
\end{equation}
Moreover, this result is optimal
in the sense that if \eqref{equ:SigmCond} and \eqref{equ:TSigmBound}
are valid
then
we must necessarily have
$s_1, \dots, s_m \ge n/2$ and
  \begin{equation}\label{equ:SIndexCondGe}
  \sum_{k\in J}\Big(\dfrac{s_k}{n}-\dfrac1{p_k}\Big)\ge -\dfrac12
  \end{equation}
for every nonempty subset $J\subset\left\{1,2,\ldots,m\right\}$.
\end{thm}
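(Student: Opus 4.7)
My plan is to prove the sufficiency direction (\eqref{equ:SigmCond} $\Rightarrow$ \eqref{equ:TSigmBound}) and the necessity direction (optimality) separately. For sufficiency, I would first decompose $\sigma=\sum_{j\in\mathbb Z}\sigma_j$ via Littlewood-Paley with $\sigma_j=\sigma\cdot\widehat\psi(2^{-j}\cdot)$ supported in $|\xi|\sim 2^j$, and simultaneously break each input $f_i$ into LP pieces $\Delta_{j_i}f_i$. The frequency support conditions restrict which tuples of scales $(j,j_1,\ldots,j_m)$ contribute to $T_\sigma(f_1,\ldots,f_m)$, naturally separating the ``one-high, rest-low'' configurations (where a paraproduct-style argument works) from the critical ``all-high'' configurations.

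Next, I would expand $\sigma_j(2^j\cdot)\widehat\psi$ in a multidimensional Fourier series on a large cube, which yields a representation
\[
T_{\sigma_j}(f_1,\dots,f_m)(x)=\sum_{\vec k\in(\mathbb Z^n)^m}c_{\vec k,j}\prod_{i=1}^m G^{(j)}_{i,k_i}(f_i)(x),
\]
where each $G^{(j)}_{i,k_i}$ is a modulated translate of a dyadic LP projection of $f_i$, and Plancherel together with \eqref{equ:SigmCond} gives the bound $\bigl(\sum_{\vec k}|c_{\vec k,j}|^2\prod_i\langle k_i\rangle^{2s_i}\bigr)^{1/2}\lesssim A$. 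Bounding each elementary product via Hölder's inequality and the LP or atomic characterizations of the $H^{p_i}$ spaces, supplemented by Peetre's maximal inequality and the Fefferman-Stein vector-valued maximal theorem, then reduces matters to the ``pure'' dyadic estimates, after which the weighted $\ell^2$ control over $\vec k$ yields summability.

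The main obstacle is the region where $1<p_i<2$ for many indices $i$: here the condition \eqref{equ:SIndexCond} defines a simplex in the space of admissible $(1/p_1,\ldots,1/p_m)$ whose boundary is built from $m2^{m-1}+1$ vertices, and straightforward interpolation between these vertices loses smoothness in the interior. To recover sharpness, I would prove endpoint-type estimates that are \emph{stable} in a neighborhood of each vertex without losing regularity. The strategy is a delicate case analysis based on the subset $J\subseteq\{1,\ldots,m\}$ of indices with $p_i<2$: for $i\notin J$ the $L^{p_i}$-based Littlewood-Paley square function handles the $i$th factor directly, while for $i\in J$ one uses partial duality to move the factor to the dual side and then applies the product-Sobolev hypothesis to pay the precise smoothness cost dictated by $J$. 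The combinatorics of how these subsets glue together is exactly what produces the $m2^{m-1}+1$ vertices, and managing this combinatorial bookkeeping is the technical heart of the argument.

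For the necessity direction, for each nonempty $J\subseteq\{1,\ldots,m\}$ I would construct a test symbol $\sigma$ concentrated on a region where the frequencies indexed by $J$ are large and mutually comparable (so that $\|\sigma(2^j\cdot)\widehat\psi\|_{W^{(s_1,\ldots,s_m)}}$ can be computed explicitly via scaling), together with test functions $f_i$ whose Fourier transforms are bumps localized accordingly. A direct rescaling comparison of $\|T_\sigma(f_1,\ldots,f_m)\|_{L^p}$ with $A\prod_i\|f_i\|_{H^{p_i}}$ then forces \eqref{equ:SIndexCondGe} for the chosen $J$. The requirement $s_k\ge n/2$ follows by specializing to symbols depending only on $\xi_k$ and invoking the classical sharpness of H\"ormander's theorem in the linear case.
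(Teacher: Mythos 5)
Your proposal takes a genuinely different route from the paper on the sufficiency direction, and the most critical part of your route has a gap.

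On structure: the paper does not use a Fourier series expansion of the symbol or a double Littlewood--Paley decomposition of $\sigma$ and all inputs. Instead it (i) reduces to the case where $\sigma$ is supported in a cone $\Gamma(V(k_1,k_2))$ where two of the $m+1$ frequencies $\xi_1,\ldots,\xi_m,\xi_1+\cdots+\xi_m$ are comparable; (ii) uses the atomic decomposition only for the indices with $p_i\le 1$, and proves \emph{pointwise} decay estimates for $T_\sigma(a_1,\ldots,a_l,f_{l+1},\ldots,f_m)$ away from the atoms' cubes (Lemmas \ref{lem:KeyLem1}, \ref{lem:KeyLem2}, \ref{lem:KeyLem3}), with the kernel estimates of Lemma \ref{lem:LInfL2} and the maximal-function estimate of Lemma \ref{lem:Tomita} doing the work; and (iii) covers $1<p_i<2$ \emph{only} by interpolation. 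Your Coifman--Meyer-style expansion is a legitimate alternative for some cases (it underlies Tomita's and Grafakos--Si's results), but it does not naturally yield the off-cube decay for atoms that the paper needs when $p_i\le 1$, so you would still need something like the paper's Key Lemmas in that regime.

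The concrete gap is in your proposed handling of $1<p_i<2$. You suggest ``partial duality'' to move factors with $p_i<2$ to the dual side. For a multilinear multiplier the $i$-th adjoint replaces $\sigma(\xi_1,\dots,\xi_m)$ by a symbol obtained through a linear change of variables that replaces $\xi_i$ by $-(\xi_1+\cdots+\xi_m)$; this transformation mixes the frequency directions and does \emph{not} preserve the anisotropic product-type Sobolev norm $W^{(s_1,\ldots,s_m)}$ when the $s_i$ are unequal. Since the sharpness here lives precisely in that anisotropy (condition \eqref{equ:SIndexCond} couples each $s_k$ to the corresponding $p_k$), partial duality generically loses smoothness and cannot reach the boundary of the simplex $\Delta(\mathbf s)$. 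The paper avoids this entirely: it proves Lemma \ref{lem:DeltaConvex}, identifying $\Delta(\mathbf s)$ as the convex hull of the origin and the $m2^{m-1}$ extreme points satisfying \eqref{equ:extremeDelta1}--\eqref{equ:extremeDelta2}, then runs an induction (Lemma \ref{lem:Interpolation2}) on the number $\ell(\mathbf p)$ of indices with $1<p_i<2$. Each inductive step replaces one such $(p_i^0,s_i^0)$ with $1/p_i^0=s_i^0/n$ by the pair of points $(1,n)$ and $(2,n/2)$, and then applies the Calder\'on--Torchinsky-type interpolation of Theorem \ref{thm:CalTorInterp}, which crucially interpolates \emph{both} the Lebesgue exponents \emph{and} the smoothness indices $s_i$ linearly, so that no regularity is lost. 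Your ``stable endpoint estimates near vertices'' heuristic is pointing at the right issue, but ``partial duality plus paying the smoothness cost'' is not a concrete mechanism that achieves this, and as stated it would not be sharp.

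Your necessity sketch via scaling test functions is standard and consistent with how the paper handles it, although the paper simply cites \cite[Theorem 5.1]{GraHan} rather than reproving it.
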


\begin{rem}\label{rem:aaa}

This paper is a sequel of   \cite{GraHan} for the following reasons:
\begin{enumerate}

\item The case $p_i\le 1$ for all $1\le i\le m$ is contained in \cite{GraHan}.

\item The endpoint case of Theorem \ref{thm:General} in the case where $p_i=p=\infty$  for all $i\in \{1,\dots , m\}$ is   proved in \cite{GraHan}:
$$
\norm{T_{\sigma}(f_1,\ldots,f_m)}_{BMO}\lesssim \sup_{j\in \mathbb Z}\norm{\sigma(2^j\cdot)\widehat{\psi}\, }  _{W^{(s_1,\ldots,s_m)}}
\prod_{i=1}^m\norm{f_i}_{L^\infty}
$$
for $s_1, \dots, s_m>n/2$.

\item The necessity of the conditions
$s_i \ge n/2$ and \eqref{equ:SIndexCondGe} was shown
in \cite[Theorem 5.1]{GraHan} for the
entire range of indices $0<p_j\le \infty$, $0<p<\infty$.

\end{enumerate}
\end{rem}

We  will consistently  use the notation
$A\lesssim B$ to indicate that $A\le CB$ for
some constant $C>0,$ and $A\approx B$
if $A\lesssim B$ and $B\lesssim A$ simultaneously.

The paper is structured as follows.
Section \ref{section:Preliminaries} contains preliminaries and known results.
In Section \ref{section:ProofMain}, we give the proof of the main result
by considering four cases.
In Section \ref{section:ProofLem}, we present
detailed proofs of the lemmas used in Section \ref{section:ProofMain}.
In the last section, Section \ref{section:WeakEstimate},
we give a result concerning the space $L^1$
and weak type estimate.

\section{Preliminaries and known results}\label{section:Preliminaries}

Now fix $0<p\le \infty$ and a Schwartz function $\Phi$ with $\widehat{\Phi}(0)\ne 0.$ Then the Hardy space $H^p$ contains all tempered distributions $f$ on $\mathbb R^n$ such that
$$
\left\Vert{f}\right\Vert_{H^p}:= \left\Vert{\sup_{0<t<\infty}{\left\vert{\Phi_t*f}\right\vert}}\right\Vert_{L^p}<\infty.
$$
It is well known that the definition of the Hardy space does not depend on the choice of the function $\Phi.$
Note that $H^p=L^p$ for all $p>1.$
When $0<p\le 1,$ one of nice features of Hardy spaces is the atomic decomposition. More precisely, any function $f\in H^p$ ($0<p\le 1$) can be decomposed as
$f=\sum_{k}\lambda_k a_k,$
where $a_k$'s are $L^\infty$-atoms for $H^p$ supported in
 cubes $Q_k$
 such that $\left\Vert{a_k}\right\Vert_{L^{\infty}}\le {\left\vert{Q_k}\right\vert}^{-\frac1p}$
and $\int x^\gamma a_k(x)dx=0$ for all ${\left\vert{\gamma}\right\vert}< N$,
and the coefficients $\lambda_k$ satisfy
$\sum_k{\left\vert{\lambda_k}\right\vert}^p\le 2^p\left\Vert{f}\right\Vert_{H^p}^p$.
The order $N$ of the moment condition can be taken arbitrarily large.

A fundamental $L^2$ estimate for $T_{\sigma}$
is given in the following theorem.
\begin{thm}[\cite{GrMiTo}]\label{thm:L2estimate}
If $s_1,\,\dots,\,s_m >n/2$, then
  $$
  \left\Vert{T_{\sigma}}\right\Vert
  _{
  L^{2}\times L^{\infty} \times \cdots\times L^{\infty}
  \longrightarrow  L^{2}
  }
  \le C \sup_{j\in\mathbb Z}
  \left\Vert{\sigma(2^j\cdot)\widehat\psi}\,
  \right\Vert
  _{W^{(s_{1},\ldots,s_{m})}}.
  $$
\end{thm}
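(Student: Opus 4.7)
The plan is a Littlewood--Paley approach in two steps: first obtain a uniform estimate for each dyadic piece $T_{\sigma_j}$ via a Fourier inversion and Cauchy--Schwarz in the product-Sobolev norm, then sum over $j$ using almost-orthogonality. Concretely, I would write $\sigma=\sum_{j\in\mathbb Z}\sigma_j$ with $\sigma_j(\vec\xi)=\sigma(\vec\xi)\widehat\psi(2^{-j}\vec\xi)$, so that $\supp \sigma_j\subset\{|\vec\xi|\approx 2^j\}\subset\mathbb R^{mn}$, and set $\tau_j(\vec\eta)=\sigma(2^j\vec\eta)\widehat\psi(\vec\eta)$, which is compactly supported in $\{|\vec\eta|\le 2\}$. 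Choosing a bump $\Phi_0$ on $\mathbb R^n$ with $\widehat{\Phi_0}(\zeta)=1$ for $|\zeta|\le 2$, we have $\sigma_j(\vec\xi)=\tau_j(2^{-j}\vec\xi)\prod_i\widehat{\Phi_0}(2^{-j}\xi_i)$, and applying Fourier inversion to $\tau_j$ and plugging into \eqref{equ:TSigmaMul} produces
\[
T_{\sigma_j}(f_1,\ldots,f_m)(x)=\int_{\mathbb R^{mn}}\widehat{\tau_j}(y_1,\ldots,y_m)\prod_{i=1}^{m}(\Phi_j*f_i)(x+2^{-j}y_i)\,dy_1\cdots dy_m,
\]
where $\Phi_j(z)=2^{jn}\Phi_0(2^jz)$.

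Next, pointwise Cauchy--Schwarz against the weight $\prod_i\langle y_i\rangle^{s_i}$ gives
\[
|T_{\sigma_j}(f_1,\ldots,f_m)(x)|^2\le\|\tau_j\|_{W^{(s_1,\ldots,s_m)}}^2\int\prod_{i=1}^m\bigl|(\Phi_j*f_i)(x+2^{-j}y_i)\bigr|^2\langle y_i\rangle^{-2s_i}\,dy_1\cdots dy_m.
\]
Integrating in $x$, using translation invariance and H\"older (with $L^2$ in slot $i=1$ and $L^\infty$ in slots $i\ge 2$), and applying $\int\langle y_i\rangle^{-2s_i}\,dy_i<\infty$ (from $s_i>n/2$) together with $\|\Phi_j*f_i\|_{L^p}\lesssim\|f_i\|_{L^p}$ by Young, I would obtain the uniform-in-$j$ bound $\|T_{\sigma_j}(f_1,\ldots,f_m)\|_{L^2}\lesssim A\,\|f_1\|_{L^2}\prod_{i\ge 2}\|f_i\|_{L^\infty}$.

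The main obstacle is summing over $j$, since the naive sum diverges. I would pass to duality, $\|T_\sigma(\vec f)\|_{L^2}=\sup_{\|g\|_{L^2}=1}|\langle T_\sigma(\vec f),g\rangle|$, and exploit almost-orthogonality: $\widehat{T_{\sigma_j}(\vec f)}$ is supported in $\{|\eta|\lesssim 2^j\}$ and the symbol forces $|\xi_1|\lesssim 2^j$, so after Littlewood--Paley decomposing $g=\sum_k\Delta_k g$ and $f_1=\sum_\ell\Delta_\ell f_1$, only pairs with $k,\ell\le j+O(1)$ contribute. Converting this vanishing-condition into genuine decay in $j-\max(k,\ell)$ requires a secondary product-type dyadic partition of $\sigma_j$ in the individual frequency variables $\xi_1,\ldots,\xi_m$ and fully exploits the product-Sobolev smoothness $s_i>n/2$ in all $m$ directions; Plancherel in $g$ and the Littlewood--Paley square function in $f_1$ then assemble the resulting multiple sum into the desired estimate.
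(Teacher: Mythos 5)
Your preliminary computation is correct: the Fourier inversion of $\tau_j$ and the Cauchy--Schwarz step give exactly the uniform-in-$j$ bound $\|T_{\sigma_j}(\vec f)\|_{L^2}\lesssim\|\tau_j\|_{W^{(s_1,\dots,s_m)}}\|f_1\|_{L^2}\prod_{i\ge2}\|f_i\|_{L^\infty}$, and this is the same starting point as the proof in \cite{GrMiTo}. The gap is the summation over $j$. You correctly observe that $\mathrm{supp}\,\sigma_j$ only gives a one-sided constraint $|\xi_1|\lesssim 2^j$ (and similarly $|\xi_1+\cdots+\xi_m|\lesssim 2^j$ for the output), which is not an almost-orthogonality statement; you then wave at a ``secondary product-type dyadic partition'' without saying what to do with the pieces. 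In fact for $m\ge3$ a genuine obstruction appears: on $\mathrm{supp}\,\sigma_j$ at least one $|\xi_i|\approx 2^j$, but if that index lies in $\{2,\dots,m\}$ (an $L^\infty$ slot) and moreover the sum $\xi_1+\cdots+\xi_m$ is small (the $L^\infty$ frequencies can cancel), then neither $f_1$ nor the output can be Littlewood--Paley localized at scale $2^j$. Duality plus Plancherel in $g$ and $f_1$ gives no decay in that configuration, so the ``multiple sum'' you describe does not converge by those devices alone. (For $m\le2$ this configuration cannot occur, which is why the plan feels plausible.)

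What is actually used to close the sum is the pair of tools recorded as Lemma~\ref{lem:Tomita} and Lemma~\ref{lem:CarlesonMeasure}, applied after the conical decomposition of $\sigma$ (as in Subsection~\ref{subsection:ThirdCase}): on each cone $\Gamma(V(k_1,k_2))$ two of the $m+1$ frequencies $\xi_1,\dots,\xi_m,\xi_1+\cdots+\xi_m$ are comparable to $2^j$, so one may insert Littlewood--Paley projections $\Delta_j$ into two designated slots. When one of $k_1,k_2$ equals $1$ or $m+1$, summation in $j$ follows from Plancherel/$L^2$ Littlewood--Paley. When both $k_1,k_2\in\{2,\dots,m\}$ (the problematic case above), the pointwise bound of Lemma~\ref{lem:Tomita} produces factors $(\zeta_j*|\Delta_j f_{k_i}|^q)^{1/q}$ and one sums using the Carleson-measure estimate \eqref{equ:ccc}, which exploits $L^\infty\subset BMO$. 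This $BMO$/Carleson-measure mechanism is the key idea that your outline does not supply, and without it the argument does not close for $m\ge3$.
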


The following two lemmas are essentially contained in \cite{MiTo},
modulo a few minor modifications.
\begin{lem}[\cite{MiTo}]\label{lem:LInfL2}
Let $m$ be a positive integer,
${\sigma}$ be a function defined on
$\mathbb R^{mn}$, and
$K={\sigma}^{\vee},$
the inverse Fourier transform of $\sigma.$
Suppose
$\sigma$ is supported in
$\big\{y \in\mathbb R^{mn}\, :\ \abs{y}\le 2\big\}$
and suppose $s_i\ge 0$ for $1\le i\le m$
and $1\le p\le q\le \infty.$
Then for each multi-index $\alpha$
there exists a constant $C_{\alpha}$ such that
$$
\left\Vert{\left<y_1\right>^{s_{1}} \cdots\left<y_l\right>^{s_{l}}
\partial_{y}^{\alpha}
K(y)}\right\Vert_{L^{q}(\mathbb R^{ml},\; dy_1\cdots dy_l)}
\le C_{\alpha} \left\Vert{\left<y_1\right>^{s_{1}}\cdots
\left<y_l\right>^{s_{l}}K(y)}\right\Vert_{L^{p}(\mathbb R^{ml},\; dy_1\cdots dy_l)},
$$
where $y=(y_1,\ldots,y_m)$ with $y_j\in\mathbb R^n$.
\end{lem}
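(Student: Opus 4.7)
The plan is to reduce the inequality to a weighted Bernstein--Nikol'ski{\u\i} estimate by exploiting the compact Fourier support of $K$ together with a standard weight-splitting argument.

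First, since $\sigma$ is supported in $\{\abs{\xi}\le 2\}\subset \mathbb R^{mn}$, I would choose a Schwartz function $\Phi$ on $\mathbb R^{mn}$ with $\widehat{\Phi}\equiv 1$ on that ball. Then $\sigma=\sigma\cdot\widehat{\Phi}$, and taking inverse Fourier transforms yields the reproducing identity $K=K\ast \Phi$. Differentiating gives
\[
\partial_{y}^{\alpha}K(y)=(K\ast \partial^{\alpha}\Phi)(y).
\]

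Second, write $W(z):=\left<z_1\right>^{s_1}\cdots \left<z_m\right>^{s_m}$ for the product weight on $\mathbb R^{mn}=\mathbb R^n\times\cdots\times\mathbb R^n$. From the elementary inequality $\left<a+b\right>\le \sqrt{2}\,\left<a\right>\left<b\right>$ on $\mathbb R^n$ (applied separately to each of the $m$ blocks) one obtains the submultiplicativity $W(y)\le C\, W(z)\,W(y-z)$ with a constant $C$ depending only on $s_1,\ldots,s_m$. Combining this with the convolution formula above, after taking absolute values and inserting the weight under the integral, yields the pointwise majorization
\[
W(y)\,\abs{\partial_{y}^{\alpha}K(y)}\le C\,\bigl(W\,\abs{K}\bigr)\ast \bigl(W\,\abs{\partial^{\alpha}\Phi}\bigr)(y).
\]

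Finally, I would apply Young's convolution inequality with exponents determined by $1+1/q=1/p+1/r$. The assumption $1\le p\le q\le \infty$ forces $r\in[1,\infty]$, so Young's inequality applies. Since $\Phi$ is a Schwartz function, the weighted convolution kernel $W\cdot\partial^{\alpha}\Phi$ decays faster than any polynomial and therefore lies in $L^{r}(\mathbb R^{mn})$ for every $r\in[1,\infty]$, with norm controlled by $\alpha$, $m$, $n$, and the $s_i$. This immediately yields $\norm{W\cdot \partial^{\alpha}K}_{L^q}\le C_\alpha \norm{W\cdot K}_{L^p}$, which is the desired inequality. The only step requiring a bit of care is the submultiplicativity of $W$; after that, Young's inequality combined with the Schwartz decay of $\Phi$ closes the argument, so I do not anticipate any real obstacle.
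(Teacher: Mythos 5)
Your scheme---reproducing identity $K=K*\Phi$ from the compact Fourier support, submultiplicativity of the Japanese-bracket weight, and Young's inequality---is the right Bernstein--Nikol'ski{\u\i} idea, and it does establish the lemma in the case $l=m$ (norms over all $m$ block variables). But as stated the lemma is a pointwise-in-$(y_{l+1},\dots,y_m)$ inequality: the weight involves only the first $l$ blocks, and both the $L^p$ and the $L^q$ norms are taken only over $dy_1\cdots dy_l$, the remaining block variables being frozen. Your convolution takes place on all of $\mathbb R^{mn}$, so it necessarily averages in those frozen directions: writing $y=(y',y'')$ with $y'=(y_1,\dots,y_l)$, $W_l(y')=\prod_{i\le l}\langle y_i\rangle^{s_i}$, and taking a tensor bump $\Phi=\phi^{\otimes m}$, your argument yields
\[
\bigl\|W_l\,\partial^\alpha K(\cdot,y'')\bigr\|_{L^q(dy')}
\lesssim\int_{\mathbb R^{n(m-l)}}\bigl\|W_l\,K(\cdot,z'')\bigr\|_{L^p(dy')}\,\bigl|\phi^{\otimes(m-l)}(y''-z'')\bigr|\,dz'',
\]
a $y''$-smoothed bound rather than the pointwise one the lemma asserts. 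The $l<m$ case is precisely what the paper needs: in the proofs of Lemmas~\ref{lem:KeyLem1} and \ref{lem:KeyLem2} the $L^\infty\to L^2$ gain is applied to an inner norm over $dy_1\cdots\widehat{dy_k}\cdots dy_r$, with $y_k$ and $y_{r+1},\dots,y_m$ held fixed.

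The fix is to take the reproducing convolution only in the $y'$ variables. Since $\widehat K=\sigma$ vanishes off $\{|\xi|\le 2\}\subset\mathbb R^{mn}$, the partial Fourier transform of $y'\mapsto K(y',y'')$, namely $\xi'\mapsto\int e^{2\pi i y''\cdot\xi''}\sigma(\xi',\xi'')\,d\xi''$, is supported in the ball $\{|\xi'|\le 2\}\subset\mathbb R^{nl}$ for every fixed $y''$. Choosing $\Phi_l$ on $\mathbb R^{nl}$ with $\widehat{\Phi_l}\equiv 1$ on that ball gives $K(\cdot,y'')=K(\cdot,y'')*_{y'}\Phi_l$ for each $y''$, and your submultiplicativity-plus-Young argument, run in $y'$ alone, then delivers the pointwise-in-$y''$ estimate (for $\alpha$ with components only among $y_1,\dots,y_l$; derivatives in the frozen block variables arise in the paper only when $l=m$, where your global argument already applies).
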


\begin{lem}[\cite{MiTo}]\label{lem:ProdSobN}
  Let $s_i> \frac{n}{2}$ for $1\le i\le m,$ and let $\widehat{\zeta}$ be
  a smooth function which is supported in an annulus centered at zero.
  Suppose that $\Phi$ is a smooth function away from zero that satisfies the estimates
  $$
  {\left\vert{\partial^{\alpha}_{\xi}\Phi(\xi)}\right\vert}\le
  C_{\alpha}{\left\vert{\xi}\right\vert}^{-{\left\vert{\alpha}\right\vert}}
  $$
  for all $\xi\in \mathbb R^{mn},$ $\xi \ne 0$,
  and for all multi-indices $\alpha.$
  Then there exists a constant $C$ such that
  $$
  \sup_{j\in\mathbb Z}
  \left\Vert{\sigma(2^{j} \cdot )
  \Phi(2^j \cdot )\widehat{\zeta}}\,
  \right\Vert_{W^{(s_1,\ldots,s_m)}}
  \le C\sup_{j\in\mathbb Z}
  \left\Vert{\sigma(2^{j} \cdot )
  \widehat\psi}\, \right\Vert_{W^{(s_1,\ldots,s_m)}}.
  $$
\end{lem}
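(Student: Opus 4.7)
The plan is to reduce the left-hand side to a finite sum of expressions of the same shape as the right-hand side by exploiting the compact Fourier support of $\widehat{\zeta}$. Fix an annulus $\{c_1 \le |\xi| \le c_2\}$ containing $\mathrm{supp}\,\widehat{\zeta}$. Since $\widehat{\psi}(2^{-k}\cdot)$ is supported in $\{2^{k-1} \le |\xi| \le 2^{k+1}\}$, only finitely many integers $k$ (say $k \in F$) produce an overlap with $\mathrm{supp}\,\widehat{\zeta}$, and the resolution of unity $\sum_k \widehat{\psi}(2^{-k}\xi)=1$ yields the pointwise decomposition
$$\sigma(2^j\xi)\,\Phi(2^j\xi)\,\widehat{\zeta}(\xi) \;=\; \sum_{k \in F} \bigl[\sigma(2^j\xi)\widehat{\psi}(2^{-k}\xi)\bigr]\cdot\bigl[\Phi(2^j\xi)\widehat{\zeta}(\xi)\bigr].$$

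For each summand I would bound the $W^{(s_1,\ldots,s_m)}$ norm of the product by treating the two factors separately, viewing multiplication by $\Phi(2^j\cdot)\widehat{\zeta}$ as a bounded operator on $W^{(s_1,\ldots,s_m)}$ with operator norm uniform in $j$. The hypothesis $|\partial^{\alpha}\Phi(\xi)| \lesssim_{\alpha} |\xi|^{-|\alpha|}$ together with the fact that $\widehat{\zeta}$ is smooth and supported in an annulus away from the origin gives, via the chain rule, that every derivative of $\Phi(2^j\cdot)\widehat{\zeta}$ is bounded uniformly in $j$, while the support stays inside $\mathrm{supp}\,\widehat{\zeta}$. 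Hence its Fourier transform decays faster than any polynomial with constants independent of $j$. The multiplication estimate then follows from Peetre's inequality $\langle y_i\rangle^{s_i} \lesssim \langle y_i-z_i\rangle^{s_i}\langle z_i\rangle^{s_i}$ applied coordinatewise, combined with Young's convolution inequality $L^{1}\ast L^{2}\to L^{2}$ on $\mathbb{R}^{mn}$.

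For the other factor, I would change variables $\eta = 2^{-k}\xi$ to write
$$\sigma(2^{j}\xi)\widehat{\psi}(2^{-k}\xi) \;=\; h_{j,k}(2^{-k}\xi),\qquad h_{j,k}(\eta):=\sigma(2^{j+k}\eta)\widehat{\psi}(\eta).$$
A direct computation of the effect of a dilation on the product-Sobolev norm gives
$$\|\sigma(2^{j}\cdot)\widehat{\psi}(2^{-k}\cdot)\|_{W^{(s_1,\ldots,s_m)}}^{2} \;=\; 2^{kmn}\!\int |\widehat{h_{j,k}}(z)|^{2}\prod_{i=1}^{m}\langle 2^{-k}z_i\rangle^{2s_i}\,dz,$$
and since $k$ ranges over the finite set $F$ the distortion factor is bounded, so this quantity is $\lesssim_F \|h_{j,k}\|_{W^{(s_1,\ldots,s_m)}}^{2} \le A^{2}$. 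Summing over the finite index set $F$ and taking the supremum in $j$ gives the lemma with constant depending only on $\widehat{\zeta}$, $\Phi$ and the $s_i$.

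The main technical obstacle is justifying that multiplication by $\Phi(2^j\cdot)\widehat{\zeta}$ is uniformly bounded on $W^{(s_1,\ldots,s_m)}$; once the uniform-in-$j$ Schwartz-type decay of the Fourier transform of this auxiliary function has been isolated from the $|\xi|^{-|\alpha|}$ control on $\Phi$, the remainder of the argument is a bookkeeping exercise with a finite partition of unity and an elementary dilation computation. The condition $s_i > n/2$ is not used here in an essential way (any nonnegative $s_i$ would do for this step), but it is inherited from the hypotheses of the lemma so that the ambient space $W^{(s_1,\ldots,s_m)}$ is the one relevant for the later applications.
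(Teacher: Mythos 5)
The paper does not give its own proof of this lemma; it is cited from Miyachi--Tomita \cite{MiTo} with the remark that it holds ``modulo a few minor modifications.'' Your argument is a correct and self-contained proof of the statement, and it proceeds by exactly the route one would expect for this kind of result: a finite partition of unity to reduce to the scales $k$ whose annuli overlap $\supp\widehat\zeta$, a multiplier estimate for $\Phi(2^j\cdot)\widehat\zeta$ on the product-Sobolev space (Peetre's inequality plus Young's convolution inequality, with the uniform-in-$j$ rapid decay of its Fourier transform deduced from the homogeneous bound $|\partial^\alpha\Phi(\xi)|\lesssim|\xi|^{-|\alpha|}$ together with the fact that $\widehat\zeta$ lives in a fixed annulus away from the origin), and a dilation computation showing $\|\sigma(2^j\cdot)\widehat\psi(2^{-k}\cdot)\|_{W^{(s_1,\dots,s_m)}}$ is comparable, uniformly over the finite set of relevant $k$, to $\|\sigma(2^{j+k}\cdot)\widehat\psi\|_{W^{(s_1,\dots,s_m)}}\le A$. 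All the computations check out, including the scaling identity for the $W^{(s_1,\dots,s_m)}$-norm, and your observation that $s_i>n/2$ is inherited from context rather than needed here is correct.

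One small presentational point: before invoking the product estimate it is worth noting that each factor $\sigma(2^j\cdot)\widehat\psi(2^{-k}\cdot)$ does in fact lie in $W^{(s_1,\dots,s_m)}$; this follows from the dilation step you prove afterwards, so logically the scaling computation should precede (or at least be cited within) the application of Young's inequality. This is a matter of ordering only and does not affect the correctness of the argument.
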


Adapting the Calder\'on and Torchinsky  interpolation techniques in the multilinear setting (for  details on this we refer  to \cite[p.\,318]{GrMiTo}) allows us to interpolate between two
 estimates for multilinear multiplier operators from a product of some Hardy spaces or Lebesgue spaces to Lebesgue spaces.

\begin{thm}[\cite{GrMiTo}]\label{thm:CalTorInterp}
  Let $0<p_i,p_{i,k}\le \infty$ and $s_{i,k}>0$ for $i=1,2$ and $1\le k\le m.$
  For $0<\theta<1$, set
  $\frac1{p}= \frac{1-\theta}{p_1}+\frac{\theta}{p_2},$
  $\frac1{p_k}
  = \frac{1-\theta}{p_{1,k}}+\frac{\theta}{p_{2,k}},$ and
  $s_k = (1-\theta)s_{1,k}+\theta s_{2,k}.$
  Assume that the multilinear operator $T_{\sigma}$ defined in
  \eqref{equ:TSigmaMul} satisfies the estimates
  $$
  \left\Vert{T_{\sigma}}\right\Vert_{H^{p_{i,1}}\times\cdots\times
  H^{p_{i,m}}\longrightarrow  L^{p_i}}
  \le C_i\sup_{j\in\mathbb Z}
  \left\Vert{\sigma(2^j\cdot)\widehat\psi}\, \right\Vert
  _{W^{(s_{i,1},\ldots,s_{i,m})}},\qquad i=1,2,
  $$
  where $L^{p_i}$ should be replaced by $BMO$ if $p_i=\infty$.
  Then
  $$
  \left\Vert{T_{\sigma}}\right\Vert_{H^{p_{1}}\times\cdots\times
  H^{p_{m}}\longrightarrow  L^{p}}
  \le C\sup_{j\in\mathbb Z}\left\Vert{\sigma(2^j\cdot)
  \widehat\psi}\,\right\Vert_{W^{(s_{1},\ldots,s_{m})}},
  $$
    where $L^{p}$ should be replaced by $BMO$ if $p=\infty$.
\end{thm}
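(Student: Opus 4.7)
The plan is to establish this via a multilinear version of Stein's complex interpolation theorem, applied to an analytic family of symbols $\{\sigma_z\}_{0\le \mathrm{Re}\,z\le 1}$ designed so that $\sigma_\theta=\sigma$, while on the two boundary lines $\sigma_z$ fits into the two hypothesized estimates with controlled growth in $|\mathrm{Im}\,z|$.

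First, I would choose a smooth frequency-side bump $\eta$ supported in a slightly larger annulus containing $\{1/2\le|\xi|\le 2\}$ and equal to $1$ on the support of $\widehat\psi$, and decompose $\sigma=\sum_{j\in\mathbb Z}\sigma_j$ with $\sigma_j(\xi):=\sigma(\xi)\eta(2^{-j}\xi)$. The analytic family is then
\[
\sigma_z(\xi_1,\dots,\xi_m):= e^{(z-\theta)^2}\sum_{j\in\mathbb Z}\sigma_j(\xi)\prod_{k=1}^{m}\langle 2^{-j}\xi_k\rangle^{\lambda_k(z)},
\]
where $\lambda_k$ is the affine function of $z$ with $\lambda_k(0)=s_k-s_{1,k}$ and $\lambda_k(1)=s_k-s_{2,k}$, so that $\lambda_k(\theta)=0$ and $\sigma_\theta=\sigma$. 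The Gaussian factor $e^{(z-\theta)^2}$ supplies the necessary decay along the imaginary direction required for Stein's theorem.

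Next, I would verify the two boundary Sobolev estimates. After the change of variables $\xi\mapsto 2^j\xi$, each auxiliary multiplier $\langle 2^{-j}\xi_k\rangle^{\lambda_k(z)}$ becomes a function $\Phi_k^{(z)}(\xi_k):=\langle \xi_k\rangle^{\lambda_k(z)}$ that satisfies Mikhlin-type pointwise derivative bounds with constants polynomial in $|\mathrm{Im}\,z|$, uniformly in $j$. Applying Lemma \ref{lem:ProdSobN} with $\Phi=\prod_k\Phi_k^{(z)}$ I would obtain
\[
\sup_{j}\bigl\|\sigma_{z}(2^j\cdot)\widehat\psi\bigr\|_{W^{(s_{i,1},\dots,s_{i,m})}}\lesssim (1+|\mathrm{Im}\,z|)^{M}\,A\quad\text{for }\mathrm{Re}\,z=i-1,\ i=1,2,
\]
where $A:=\sup_{j}\|\sigma(2^j\cdot)\widehat\psi\|_{W^{(s_1,\dots,s_m)}}$. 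Feeding this into the two hypothesized endpoint estimates yields
\[
\bigl\|T_{\sigma_z}\bigr\|_{H^{p_{i,1}}\times\cdots\times H^{p_{i,m}}\to L^{p_i}}\lesssim (1+|\mathrm{Im}\,z|)^M\,A\quad\text{for }\mathrm{Re}\,z=i-1,
\]
with the usual $BMO$ replacement when $p_i=\infty$.

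Finally, I would invoke a multilinear version of Stein's complex interpolation theorem: test against $m$-tuples of Hardy-space atoms $a_1,\dots,a_m$ (or Schwartz functions) and an $L^{p'}$ or $H^1$ test function $g$, and verify that $z\mapsto\langle T_{\sigma_z}(a_1,\dots,a_m),g\rangle$ is holomorphic in the strip with admissible growth of the form $Ce^{a|\mathrm{Im}\,z|}$; evaluation at $z=\theta$ yields the desired bound for $T_\sigma$. The main obstacle is the Sobolev-norm bookkeeping in the third step: because $\lambda_k(z)$ is complex on the boundary, one writes $\langle \xi_k\rangle^{\lambda_k(z)}=\exp(\lambda_k(z)\log\langle \xi_k\rangle)$ and carefully tracks the $|\mathrm{Im}\,z|$-dependence of Leibniz-type products after differentiating in each $\xi_k$ slot, ensuring that the Mikhlin bounds on $\Phi_k^{(z)}$ hold with at most polynomial growth in $|\mathrm{Im}\,z|$, so that the Gaussian prefactor $e^{(z-\theta)^2}$ dominates on the two boundary lines.
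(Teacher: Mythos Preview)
Your overall strategy---build an analytic family $\{\sigma_z\}$ with $\sigma_\theta=\sigma$, control the boundary Sobolev norms, then apply multilinear Stein complex interpolation---is exactly the Calder\'on--Torchinsky scheme the paper invokes (it does not reprove the theorem here but cites \cite[p.~318]{GrMiTo}). However, your specific construction of $\sigma_z$ is on the wrong side of the Fourier transform, and the argument breaks at the boundary Sobolev estimate.

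Recall that $\|g\|_{W^{(s_1,\dots,s_m)}}=\big\|\widehat g(y)\prod_k\langle y_k\rangle^{s_k}\big\|_{L^2}$: the Sobolev weight lives on the \emph{spatial} variable $y$, i.e., on $K_j=(\sigma(2^j\cdot)\widehat\psi)^{\vee}$, not on the frequency variable $\xi$. Your family multiplies $\sigma(2^j\cdot)\widehat\psi(\xi)$ by $\prod_k\langle\xi_k\rangle^{\lambda_k(z)}$; on the annulus $\tfrac12\le|\xi|\le2$ each factor $\langle\xi_k\rangle^{\lambda_k(z)}$ is just a smooth bounded function, so your $\sigma_z(2^j\cdot)\widehat\psi$ differs from $\sigma(2^j\cdot)\widehat\psi$ only by a harmless Mikhlin factor. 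In particular, Lemma~\ref{lem:ProdSobN} only gives
\[
\sup_j\|\sigma_z(2^j\cdot)\widehat\psi\|_{W^{(s_{1,1},\dots,s_{1,m})}}\lesssim\sup_j\|\sigma(2^j\cdot)\widehat\psi\|_{W^{(s_{1,1},\dots,s_{1,m})}},
\]
with the \emph{same} exponents on both sides; it cannot convert the right-hand side into $A=\sup_j\|\sigma(2^j\cdot)\widehat\psi\|_{W^{(s_1,\dots,s_m)}}$. When some $s_{1,k}>s_k$ the right side above need not be finite, and the boundary estimate fails.

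The correct analytic family applies Bessel potentials to the localized pieces: set
\[
\sigma_z(2^j\cdot)\widehat\psi\;=\;e^{(z-\theta)^2}\prod_{k=1}^m (I-\Delta_{\xi_k})^{\lambda_k(z)/2}\big[\sigma(2^j\cdot)\widehat\psi\big],
\]
equivalently multiply $K_j(y)$ by $\prod_k\langle y_k\rangle^{\lambda_k(z)}$. Then on $\mathrm{Re}\,z=0$ one has, with $\lambda_k(0)=s_k-s_{1,k}$,
\[
\|\sigma_z(2^j\cdot)\widehat\psi\|_{W^{(s_{1,1},\dots,s_{1,m})}}=\Big\|K_j(y)\prod_k\langle y_k\rangle^{s_k+it_k}\Big\|_{L^2}=\Big\|K_j(y)\prod_k\langle y_k\rangle^{s_k}\Big\|_{L^2}\le A,
\]
and similarly on $\mathrm{Re}\,z=1$. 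One must also check $\sigma_\theta=\sigma$ (here use a genuine partition of unity such as $\widehat\psi$ itself, not an $\eta$ with $\sum_j\eta(2^{-j}\cdot)\neq1$), and verify analyticity and admissible growth of $z\mapsto\langle T_{\sigma_z}(f_1,\dots,f_m),g\rangle$; your Gaussian prefactor handles the latter. With this correction the rest of your outline goes through and matches the argument referred to in \cite{GrMiTo}.
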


Fix a Schwartz function $K.$
We denote the multilinear
operator of convolution type associated with the kernel $K$ by
$$
{ T^K}(f_1,\ldots,f_m)(x) = \int_{\mathbb R^{mn}}
K(x-y_1,\ldots,x-y_m)f_1(y_1)\cdots f_m(y_m)dy_1\cdots dy_m.
$$
The following result can be verified with a very similar argument
as showed in \cite[Proposition 3.4]{GraHan}.
\begin{prop}\label{pro:SwitchSum}
  Let $0<p_1, \dots, p_l\le 1$ and
$1<p_{l+1}, \dots, p_m\le \infty$.
Let $K$ be a smooth function with compact support.
Suppose $f_i\in H^{p_i}$, $1\le i\le l$,
has atomic representation
$f_i = \sum_{k_i}\lambda_{i,k_i}a_{i,k_i},$
where $a_{i,k_i}$ are $L^{\infty}$-atoms for $H^{p_i}$ and
$\sum_{k_i}{\left\vert{\lambda_{i,k_i}}\right\vert}^{p_i}\le
2^{p_i}\norm{f_i}^{p_i}_{H^{p_i}}$.
Suppose $f_i \in L^{p_i}$ for $l + 1\le i \le m$.
Then
  $$
 {  T^K}(f_1,\ldots,f_m)(x) =
\sum_{k_1}\cdots\sum_{k_l}\lambda_{1,k_1}\cdots\lambda_{l,k_l} {  T^K}(a_{1,k_1},\ldots,a_{l,k_l},f_{l+1},\ldots,f_m)(x)
  $$
for almost all $x\in\mathbb R^n.$
\end{prop}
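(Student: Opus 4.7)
The plan is to argue by induction on $l$, which reduces the claim to the single-variable swap
\[T^K(f_1,g_2,\ldots,g_m)(x)=\sum_{k_1}\lambda_{1,k_1}T^K(a_{1,k_1},g_2,\ldots,g_m)(x),\qquad\text{for a.e.\ }x,\]
valid whenever $f_1\in H^{p_1}$ has an atomic expansion $f_1=\sum_{k_1}\lambda_{1,k_1}a_{1,k_1}$ and $g_2,\ldots,g_m$ are fixed functions (each either an $L^\infty$-atom for some $H^{p_j}$ or an element of $L^{p_j}$). Iterating this single-variable swap $l$ times in succession produces the iterated sum appearing in the statement.

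For the continuity step, I would observe that, since $K$ is smooth with compact support, $\widehat K$ is Schwartz; condition \eqref{equ:SigmCond} therefore holds trivially for every choice of smoothness exponents. Theorem \ref{thm:L2estimate} combined with the interpolation Theorem \ref{thm:CalTorInterp} then yields boundedness of $T^K$ from $H^{p_1}\times\cdots\times H^{p_m}$ into $L^p$. Writing $f_1^{(N)}:=\sum_{k_1\le N}\lambda_{1,k_1}a_{1,k_1}$, the atomic decomposition gives $f_1^{(N)}\to f_1$ in $H^{p_1}$; multilinearity turns the partial sum $\sum_{k_1\le N}\lambda_{1,k_1}T^K(a_{1,k_1},g_2,\ldots,g_m)$ into $T^K(f_1^{(N)},g_2,\ldots,g_m)$, and boundedness forces $L^p$-convergence of these partial sums to $T^K(f_1,g_2,\ldots,g_m)$, hence pointwise a.e.\ convergence along some subsequence.

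The principal technical obstacle is upgrading this to pointwise a.e.\ convergence of the full sequence of partial sums, so that the iterated sum in the statement is well-defined at almost every $x$. My plan is to establish absolute convergence of the series at almost every $x$. Let $c_{1,k_1}$ and $\ell_{1,k_1}$ denote the centre and side-length of the cube $Q_{1,k_1}$ supporting $a_{1,k_1}$, and let $R>0$ be such that $\supp K\subset\{|y|\le R\}$. Choosing the moment order $N$ of the atoms large (permissible since this order is arbitrary in the atomic decomposition) and Taylor-expanding $K$ in its first slot about $c_{1,k_1}$, the vanishing moments $\int y_1^{\gamma}a_{1,k_1}(y_1)\,dy_1=0$ for $|\gamma|<N$ lead to the pointwise bound
\[|T^K(a_{1,k_1},g_2,\ldots,g_m)(x)|\lesssim \ell_{1,k_1}^{N+n(1-1/p_1)}\,\mathbf{1}_{\{|x-c_{1,k_1}|\le R+\ell_{1,k_1}\}}\prod_{j=2}^{m}M(g_j)(x),\]
where $M$ denotes the Hardy--Littlewood maximal operator. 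The maximal factors are finite for almost every $x$ since each $g_j\in L^1_{\mathrm{loc}}$. A layer decomposition by dyadic side-length $\ell_{1,k_1}\sim 2^{-\nu}$, combined with $\sum_{k_1}|\lambda_{1,k_1}|^{p_1}<\infty$ and the elementary inequality $\sum_{k_1}b_{k_1}\le\bigl(\sum_{k_1}b_{k_1}^{p_1}\bigr)^{1/p_1}$ valid for non-negative $b_{k_1}$ when $0<p_1\le 1$, then produces an a.e.\ finite summable majorant. The resulting pointwise limit must coincide with the $L^p$-limit, completing the single-variable swap. This absolute-convergence estimate is the only step that exploits the fine structure of $H^{p_1}$-atoms—the normalization $\|a_{1,k_1}\|_{L^\infty}\le|Q_{1,k_1}|^{-1/p_1}$ together with arbitrarily high moment vanishing—and it is what enables the promotion of $H^{p_1}$-convergence of the input series to a.e.\ convergence of the output series.
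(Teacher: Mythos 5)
Your overall strategy---reduce to a single-variable swap, obtain a pointwise bound on $T^K(a_{1,k_1},g_2,\ldots,g_m)$ from moment cancellation, and conclude a.e.\ absolute convergence---is the right one, and the calculation leading to $\ell_{1,k_1}^{N+n(1-1/p_1)}$ is correct. But this estimate \emph{grows} as $\ell_{1,k_1}\to\infty$: the exponent $N+n(1-1/p_1)$ has to be taken positive for the small-cube layers, and nothing in the atomic decomposition caps the side lengths, so the large-cube contribution to $\sum_{k_1}|\lambda_{1,k_1}|\,\ell_{1,k_1}^{N+n(1-1/p_1)}\mathbf 1_{\{|x-c_{1,k_1}|\le R+C\ell_{1,k_1}\}}$ can diverge at a.e.\ $x$ even when $\sum_{k_1}|\lambda_{1,k_1}|^{p_1}<\infty$ (take $\ell_{1,k_1}=2^{k_1}$, $\lambda_{1,k_1}=2^{-k_1}$, all cubes containing $x$). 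You also need, for the large cubes, the direct no-cancellation bound
\[
|T^K(a_{1,k_1},g_2,\ldots,g_m)(x)|\le\|K\|_{L^\infty}\int_{|x-y_1|\le R}|a_{1,k_1}(y_1)|\,dy_1\cdot\prod_{j=2}^m\int_{|x-y_j|\le R}|g_j(y_j)|\,dy_j\lesssim\ell_{1,k_1}^{-n/p_1}\prod_{j=2}^m M(g_j)(x),
\]
which \emph{decays} in $\ell_{1,k_1}$. Since $\min\{\ell^{N+n(1-1/p_1)},\,\ell^{-n/p_1}\}\lesssim 1$ uniformly in $\ell$, the a.e.\ absolute convergence then follows at once from $\sum_{k_1}|\lambda_{1,k_1}|B_{k_1}(x)\le\bigl(\sum_{k_1}|\lambda_{1,k_1}|^{p_1}B_{k_1}(x)^{p_1}\bigr)^{1/p_1}\lesssim\bigl(\sum_{k_1}|\lambda_{1,k_1}|^{p_1}\bigr)^{1/p_1}\prod_j M(g_j)(x)$, with no layer decomposition needed. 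As written, your argument has a real gap on the large cubes.

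The boundedness claim is also not justified as stated. Theorem~\ref{thm:L2estimate} gives only $L^2\times L^\infty\times\cdots\times L^\infty\to L^2$, and Theorem~\ref{thm:CalTorInterp} interpolates \emph{between two given endpoint estimates}; you have no $H^{p_1}$-endpoint with $p_1\le 1$ to feed it except the theorems of Section~\ref{section:ProofMain}, which are proved using Proposition~\ref{pro:SwitchSum} itself, so this route is circular. Fortunately the $L^p$-limit step can be dispensed with entirely: for a.e.\ fixed $x$ the function $y_1\mapsto\int K(x-y_1,x-y_2,\ldots,x-y_m)\,g_2(y_2)\cdots g_m(y_m)\,dy_2\cdots dy_m$ is a $C_c^\infty$ test function on $\mathbb R^n$ (compact support inherited from $\supp K$; smoothness by differentiating under the integral, the $g_j$ being locally integrable), and the atomic series $\sum_{k_1}\lambda_{1,k_1}a_{1,k_1}$ converges to $f_1$ in $\mathcal S'$, so the single-variable swap follows directly from the continuity of this pairing, while the corrected absolute-convergence estimate of the previous paragraph handles the iterated sums produced by the induction.
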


We also use the following lemmas.

\begin{lem}[{\cite[Lemma 2.1]{GrKal}}]\label{lem:LpAvg}
  Let $0<p\le 1$ and let $\Big(f_Q\Big)_{Q\in \mathcal J}$
be a family of nonnegative integrable functions with
$\supp(f_Q)\subset Q$ for all $Q\in\mathcal{J},$
where $\mathcal{J}$ is a family of finite or
countable cubes in $\mathbb R^n.$
Then we have
  $$
\left\Vert
{\sum_{Q\in\mathcal{J}} f_Q}
\right\Vert_{L^p}
\lesssim
\left\Vert
{
\sum_{Q\in\mathcal{J}}
\Big(\frac1{{\left\vert{Q}\right\vert}}
\int_Q f_Q(x)\, dx\Big)
\chi_{Q}
}
\right\Vert_{L^p},
  $$
  where the constant of the inequality depends only on $p.$
\end{lem}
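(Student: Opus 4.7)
The plan is to combine the $p$-subadditivity of the $L^p$ quasi-norm (valid for $0<p\le 1$) with a cubewise application of Jensen's inequality. For each fixed $Q\in\mathcal{J}$, the concavity of $t\mapsto t^p$ yields
$$
\int_Q f_Q(x)^p\,dx \le |Q|^{1-p}\Big(\int_Q f_Q(x)\,dx\Big)^p
= \int \Big(\tfrac{1}{|Q|}\int_Q f_Q\Big)^p\chi_Q(x)\,dx,
$$
so on each cube separately one has $\|f_Q\|_{L^p}\le \|c_Q\chi_Q\|_{L^p}$, where $c_Q=\frac{1}{|Q|}\int_Q f_Q$. This single-cube inequality is the only local ingredient needed; the real content of the lemma is the globalization to a family of (possibly overlapping) cubes.

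The immediate attempt would be to apply $p$-subadditivity once and write
$$\Big\|\sum_Q f_Q\Big\|_p^p\le \sum_Q\|f_Q\|_p^p\le \sum_Q c_Q^p|Q|,$$
and then to compare $\sum_Q c_Q^p|Q|$ with $\int\bigl(\sum_Q c_Q\chi_Q\bigr)^p$. However, this crude approach fails when the cubes overlap: two identical copies of the same cube $Q$ give $\sum_Q c_Q^p|Q|=2c_Q^p|Q|$ while $\|\sum_Q c_Q\chi_Q\|_p^p=2^p c_Q^p|Q|$, with a ratio $2^{1-p}$ that can be driven arbitrarily large by piling up many copies. Consequently the proof must avoid a wasteful $p$-subadditivity applied cube-by-cube.

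To overcome this I plan a stopping-time / sparse decomposition of $\mathcal{J}$. Greedy selection of maximal cubes in $\mathcal{J}$, together with a Whitney-type partitioning of their descendants, splits the family into countably many subcollections $\mathcal{J}_k$ within each of which the cubes are either pairwise disjoint or organized in a controlled nested fashion. Within each subcollection the problematic overlaps disappear and the cubewise bound from the first step combines cleanly (for a disjoint subcollection, $\|\sum_Q c_Q\chi_Q\|_p^p=\sum_Q c_Q^p|Q|$ and equality with the $f_Q$-sum is automatic on disjoint supports). Applying $p$-subadditivity once over the index $k$ then costs only a constant depending on $p$.

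The principal obstacle is that the Hardy-Littlewood maximal function is not bounded on $L^p$ for $p\le 1$, so the natural maximal-function comparison between $\sum_Q f_Q$ and $\sum_Q c_Q\chi_Q$ is unavailable, and one must proceed combinatorially from the geometry of $\mathcal{J}$. The key technical fact that drives the comparison is that
$$
\int_R f_Q\,dx \;=\; \int_R c_Q\chi_Q\,dx \;=\; c_Q|Q| \quad\text{whenever } R\supseteq Q,
$$
together with the nonnegativity and the support condition $\supp f_Q\subset Q$. This identity is the hinge that permits replacing each $f_Q$ by its mean $c_Q\chi_Q$ inside a level-set / good-$\lambda$ comparison without appealing to an $L^p$-bounded maximal operator.
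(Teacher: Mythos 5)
Your opening Jensen step and your diagnosis of why the crude $p$-subadditivity argument fails are both correct. The gap is in the repair. After estimating each subfamily $\mathcal J_k$ you would be left with
\[
\Big\|\sum_{Q}f_Q\Big\|_{L^p}^p
\le
\sum_{k}\Big\|\sum_{Q\in\mathcal J_k}f_Q\Big\|_{L^p}^p
\le
\sum_{k}\Big\|\sum_{Q\in\mathcal J_k}c_Q\chi_Q\Big\|_{L^p}^p ,
\]
and you would then need the last sum to be $\lesssim \big\|\sum_{Q}c_Q\chi_Q\big\|_{L^p}^p$. But for $0<p\le 1$, $p$-subadditivity runs in exactly the opposite direction: the quantity on the left of that desired inequality is the \emph{larger} one whenever the subfamilies' contributions overlap, and the discrepancy grows without bound as the overlap increases. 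So "applying $p$-subadditivity once over $k$" does not cost a constant; it costs precisely the factor you were trying to avoid, unless the $g_k=\sum_{Q\in\mathcal J_k}c_Q\chi_Q$ have essentially disjoint supports. That is impossible for a general family mixing scales: a small cube inside a big one forces them into the same subcollection, and your plan treats only the disjoint sub-case explicitly, not the nested one — where for a chain $Q_1\subset\cdots\subset Q_N$ the functions $c_{Q_j}\chi_{Q_j}$ all pile up on $Q_1$ and the comparison between $\sum_j c_{Q_j}^p|Q_j|$ and $\|\sum_j c_{Q_j}\chi_{Q_j}\|_{L^p}^p$ is exactly the one you correctly flagged as lossy.

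There is also a conceptual sign that a cube-at-a-time replacement cannot be the engine of the proof: the one-step monotonicity $\|h+f_Q\|_{L^p}\le\|h+c_Q\chi_Q\|_{L^p}$ for a fixed nonnegative background $h$ is false for $p<1$. Take $n=1$, $p=1/2$, $Q=[0,1]$, $f_Q=2\chi_{[0,1/2]}$ (so $c_Q=1$), and $h=100\chi_{[1/2,1]}$; then $\|h+f_Q\|_{1/2}^{1/2}=\tfrac{\sqrt2}{2}+5>\tfrac12+\tfrac{\sqrt{101}}{2}=\|h+c_Q\chi_Q\|_{1/2}^{1/2}$. So the lemma's constant is genuinely bigger than $1$ and any correct proof must make a global comparison. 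Note that this paper does not prove the lemma — it cites \cite[Lemma 2.1]{GrKal}. The cited proof does not decompose $\mathcal J$ at all: it works with the superlevel sets of $g=\sum_Q c_Q\chi_Q$, using the observations you correctly singled out ($\int_R f_Q=c_Q|Q|$ for $R\supseteq Q$, and the fact that $Q\not\subset\{g>\lambda\}$ forces $c_Q\le\lambda$) to compare $\int F$ and $\int g$ across level sets of $g$, and then applies Jensen to those integrals rather than to individual cubes. I would recommend reconstructing that argument instead of pursuing the sparse decomposition, which as written does not close.
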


\begin{lem}[{\cite[Lemma 3.3]{GrMiTo}}]\label{lem:Tomita}
Let $s>n/2$, $\max\{1,n/s\}<q<2$, and
$$
\zeta_{j}(x) = 2^{jn} (1+\abs{2^{j}x})^{-sq},
\quad j\in \mathbb Z, \quad x\in {\mathbb R}^{n}.
$$
Suppose $\sigma \in W^{(s,\dots,s)}({\mathbb R}^{mn})$
and
$\mathrm{supp}\, \sigma \subset \{|\xi| \le 2^{j+1}\}$
with a $j\in \mathbb Z$.
Then there exists a constant $C>0$ depending only on
$m$, $n$, $s$, and $q$ such that
\[
|T_{\sigma}(f_1,\dots,f_m)(x)|
\le
C\|\sigma (2^{j}\cdot )\|_{W^{(s,\dots,s)}}
(\zeta_{j}\ast \abs{f_1}^{q})(x)^{1/q}
\dots
(\zeta_{j}\ast \abs{f_m}^{q})(x)^{1/q}
\]
for all $x \in {\mathbb R}^n$.
\end{lem}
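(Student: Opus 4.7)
The plan is to first reduce to the case $j=0$ by scaling. A change of variables $\xi_i \mapsto 2^j \eta_i$ in the defining integral for $T_\sigma$ and the corresponding substitution in each $\widehat{f_i}$ give the dilation identity
\[
T_\sigma(f_1,\ldots,f_m)(x) = T_{\sigma(2^j\cdot)}\big(f_1(2^{-j}\cdot),\ldots,f_m(2^{-j}\cdot)\big)(2^j x).
\]
Under this identity, together with the substitution $u\mapsto 2^j u$ in each convolution $\zeta_j \ast |f_i|^q$, the desired inequality becomes the analogous statement for the rescaled symbol $\widetilde\sigma(\eta):=\sigma(2^j\eta)$, now supported in $\{|\eta|\le 2\}$, with $\zeta_0(u)=(1+|u|)^{-sq}$ in place of $\zeta_j$. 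So we may assume $j=0$.

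Writing $T_\sigma(f_1,\ldots,f_m)(x)=\int_{\mathbb R^{mn}} K(u_1,\ldots,u_m)\prod_{i=1}^m f_i(x-u_i)\,du_1\cdots du_m$ with $K=\sigma^\vee$, I would apply H\"older's inequality one variable at a time, inserting the weight $\langle u_i\rangle^{s}$ on $K$ and $\langle u_i\rangle^{-s}$ on $f_i(x-u_i)$ and using conjugate exponents $q',q$. The $u_1$-step reads
\[
\int |K(u_1,\ldots,u_m)|\,|f_1(x-u_1)|\,du_1 \le \Big(\int|K(u_1,\ldots,u_m)|^{q'}\langle u_1\rangle^{sq'}du_1\Big)^{1/q'}(\zeta_0 \ast |f_1|^q)(x)^{1/q}.
\]
Iterating this in $u_2,\ldots,u_m$ and applying Fubini to accumulate all weights on the remaining kernel integral produces
\[
|T_\sigma(f_1,\ldots,f_m)(x)| \le \Big\|K(u)\prod_{i=1}^m \langle u_i\rangle^s\Big\|_{L^{q'}(\mathbb R^{mn})}\prod_{i=1}^m (\zeta_0 \ast |f_i|^q)(x)^{1/q}.
\]
The hypotheses $q>1$ and $q<2$ make $q'$ finite and $\ge 2$, while $q>n/s$ guarantees $\zeta_0\in L^1(\mathbb R^n)$, so each convolution is meaningful.

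It remains to bound the weighted $L^{q'}$-norm of $K$ by $\|\sigma\|_{W^{(s,\ldots,s)}}$. Since $\widetilde\sigma$ is supported in $\{|\xi|\le 2\}$ and $q'\ge 2$, Lemma \ref{lem:LInfL2} with $l=m$, $\alpha=0$, $p=2$, $q=q'$ gives
\[
\Big\|K\prod_{i=1}^m \langle u_i\rangle^s\Big\|_{L^{q'}} \lesssim \Big\|K\prod_{i=1}^m \langle u_i\rangle^s\Big\|_{L^2},
\]
and the latter equals $\|\sigma\|_{W^{(s,\ldots,s)}}$: indeed $K(u)=\widehat\sigma(-u)$, so the reflection $u\mapsto -u$ identifies $\int\prod_{i}\langle u_i\rangle^{2s}|K(u)|^2\,du$ with $\int\prod_{i}\langle u_i\rangle^{2s}|\widehat\sigma(u)|^2\,du$, which is $\|\sigma\|_{W^{(s,\ldots,s)}}^2$ by the paper's definition of the product-type Sobolev norm. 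Undoing the $j=0$ reduction restores $\zeta_j$ in the convolution and replaces $\|\sigma\|_W$ by $\|\sigma(2^j\cdot)\|_{W^{(s,\ldots,s)}}$, yielding the stated pointwise bound.

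The main technical point is the iterated H\"older step: each successive application must peel off exactly one convolution factor $(\zeta_0\ast|f_i|^q)(x)^{1/q}$ depending only on $x$, while the remaining kernel integral keeps $K$ with one extra weight $\langle u_i\rangle^s$ and one fewer $f$-variable. Once this bookkeeping is set up, the appeals to Lemma \ref{lem:LInfL2} and to Plancherel via the reflection $K=\widehat\sigma(-\cdot)$ are immediate.
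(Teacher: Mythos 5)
Your proof is correct, and since the paper cites this as Lemma~3.3 of \cite{GrMiTo} without reproducing the argument, you have supplied a self-contained derivation rather than one that can be checked line-by-line against the text. The structure you chose — dilate to reduce to $j=0$, then H\"older with exponents $(q',q)$ to peel off the $f_i$-factors, then Lemma~\ref{lem:LInfL2} to pass from the weighted $L^{q'}$-norm of $K$ to the weighted $L^2$-norm, and finally Plancherel via $K(y)=\widehat{\sigma}(-y)$ — is exactly in the spirit of the estimates used elsewhere in the paper (for instance in the proof of Lemma~\ref{lem:KeyLem1}), and every step checks out: the dilation identity is right, the scaling of $\zeta_j$ back to $\zeta_0$ works because $(\zeta_0\ast|f_i(2^{-j}\cdot)|^q)(2^jx)=(\zeta_j\ast|f_i|^q)(x)$, the hypothesis $q<2$ gives $q'>2$ so Lemma~\ref{lem:LInfL2} applies with $p=2\le q'$, and the minor replacement of $(1+|u|)^{-sq}$ by $\langle u\rangle^{-sq}$ only costs an absolute constant.

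One small simplification worth noting: the iterated, variable-by-variable H\"older step is unnecessary. Because the $f$-part of the integrand tensorizes — $\prod_i\langle u_i\rangle^{-s}|f_i(x-u_i)|$ — a single application of H\"older in all $mn$ variables at once gives
\[
|T_\sigma(f_1,\dots,f_m)(x)|
\le
\Big\|K\prod_{i=1}^m\langle u_i\rangle^{s}\Big\|_{L^{q'}(\mathbb R^{mn})}
\bigg(\prod_{i=1}^m\int_{\mathbb R^n}\langle u_i\rangle^{-sq}|f_i(x-u_i)|^q\,du_i\bigg)^{1/q},
\]
and the second factor is already the product of convolutions, so no bookkeeping about one-fewer-$f$-variable-per-step is needed. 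This does not change the result, but it removes the main technical point you flagged at the end. Otherwise the argument is sound and complete.
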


\begin{lem}[{\cite[Lemma 3.2]{GrMiTo}}]\label{lem:CarlesonMeasure}
Let $\varphi \in \mathcal{S}({\mathbb R}^n)$ be such that
$\varphi (0)=0$, and set
\begin{equation}\label{equ:Deltaj}
\Delta_{j}f (x)= \int _{\mathbb R^n}
e^{2\pi i x\cdot \xi}\,
\varphi (2^{-j} \xi)
\widehat f (\xi)\,
d\xi,
\quad
j\in \mathbb Z.
\end{equation}
Let $\epsilon >0$ and
$\zeta_{j}(x) = 2^{jn} (1+\abs{2^{j}x})^{-n-\epsilon}$,
$j\in \mathbb Z$, $x\in {\mathbb R}^{n}$.
Then the following inequalities hold
for each $0<q<2$:
\begin{align}
&\sum_{j\in \mathbb Z}
\int_{\mathbb R ^{n}}
|\Delta_{j} f(x)|^2\,
dx
\le C\|f\|_{L^2}^2,
\label{equ:aaa}
\\
&
\sum_{j\in \mathbb Z}
\int_{\mathbb R ^{n}}
(\zeta_{j}\ast |f|^q)(x)^{2/q}
(\zeta_{j}\ast |\Delta_{j} g|^q)(x)^{2/q}
\,
dx
\le C_{q}\|f\|_{L^2}^2\|g\|_{BMO}^2.
\label{equ:ccc}
\end{align}
\end{lem}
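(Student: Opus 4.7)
\emph{Proof proposal.} The plan is to handle the two inequalities by independent techniques: \eqref{equ:aaa} reduces to Plancherel, while \eqref{equ:ccc} will need a discrete Carleson-measure argument. For \eqref{equ:aaa}, I would apply Plancherel to each $\|\Delta_j f\|_{L^2}^2$, interchange sum and integral, and reduce matters to showing the multiplier $\Phi(\xi)=\sum_{j\in\mathbb Z}|\varphi(2^{-j}\xi)|^2$ is bounded on $\mathbb R^n\setminus\{0\}$. Since $\varphi\in\mathcal S$ with $\varphi(0)=0$, one has $|\varphi(\eta)|\lesssim \min(|\eta|,|\eta|^{-N})$ for any $N$, and summing the resulting two geometric series in $j$ (one over the range $2^{-j}|\xi|\le 1$, the other over $2^{-j}|\xi|\ge 1$) yields $\Phi\in L^\infty$, which is all that is required.

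The estimate \eqref{equ:ccc} is the substantive part, and I would assemble it from three pointwise ingredients. First, since $\zeta_j$ is an $L^1$-bounded, positive, polynomially decaying approximate identity at scale $2^{-j}$, one has $(\zeta_j\ast h)(y)\lesssim Mh(y)$ uniformly in $j$ for any $h\ge 0$; applying this to $h=|f|^q$ and raising to the power $2/q$ gives the pointwise bound $(\zeta_j\ast |f|^q)^{2/q}\lesssim (M_qf)^2$, where $M_qf:=(M|f|^q)^{1/q}$, and because $q<2$ the Hardy--Littlewood theorem yields $\|M_qf\|_{L^2}\lesssim\|f\|_{L^2}$. Second, since $2/q\ge 1$, Jensen's inequality gives $(\zeta_j\ast |\Delta_j g|^q)^{2/q}\lesssim \zeta_j\ast |\Delta_j g|^2$, converting the $q$-average back to an $L^2$-average on the $g$-side. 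Third, the classical $BMO$ characterization says that the discrete family $\{|\Delta_j g(y)|^2\,dy\otimes \delta_{2^{-j}}\}_{j\in\mathbb Z}$ assembles into a Carleson measure on the upper half-space of norm $\lesssim \|g\|_{BMO}^2$.

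Combining the first two ingredients and using Fubini to move the surviving $\zeta_j$-factor onto the $f$-side, the left-hand side of \eqref{equ:ccc} is bounded by $\sum_{j\in\mathbb Z}\int H_j(y)|\Delta_j g(y)|^2\,dy$, where $H_j$ is controlled at scale $2^{-j}$ by a constant multiple of $(M_qf)^2$. The discrete Carleson embedding theorem then finishes the job, giving the bound $\lesssim \|g\|_{BMO}^2\|M_qf\|_{L^2}^2\lesssim\|g\|_{BMO}^2\|f\|_{L^2}^2$. The hard part is the Carleson embedding step: one must pair $H_j$, which genuinely depends on $j$ through the $\zeta_j$-averaging, with a tent-space non-tangential maximal function of $M_qf$ in a manner that preserves the scale-by-scale localization. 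Once this pairing is correctly arranged, all the remaining steps are routine.
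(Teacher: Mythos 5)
The paper does not prove this lemma; it is quoted from \cite[Lemma 3.2]{GrMiTo} with no in-paper argument to compare against, so your proposal is judged on its own merits. The treatment of \eqref{equ:aaa} is standard and correct: $\varphi(0)=0$ together with the Schwartz decay give $|\varphi(\eta)|\lesssim\min(|\eta|,|\eta|^{-N})$, the dyadic sum $\sum_j|\varphi(2^{-j}\xi)|^2$ is bounded, and Plancherel finishes. Your three ingredients for \eqref{equ:ccc} are also correct individually.

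The assembly of \eqref{equ:ccc}, however, has a genuine gap at precisely the step you flag as ``the hard part.'' After Fubini you are integrating $H_j=\zeta_j*(M_qf)^2$ against the Carleson measure $|\Delta_jg|^2\,dy$, and Carleson embedding then requires the non-tangential maximal function of $H_j$ to lie in $L^1$. It does not: because $\zeta_j$ has only polynomial tails $\sim|2^jx|^{-n-\epsilon}$ (not compact support), for $|x-y|<2^{-j}$ one has $\zeta_j*(M_qf)^2(y)\approx\zeta_j*(M_qf)^2(x)\lesssim M\bigl((M_qf)^2\bigr)(x)$ and this bound is sharp. Testing against $(M_qf)^2=\chi_{B(0,1)}$ shows the cone supremum is $\gtrsim|x|^{-n}$ at infinity, which is not integrable. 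So the quantity you feed to the embedding theorem is not in $L^1$ and the bound $\lesssim\|g\|_{BMO}^2\|M_qf\|_{L^2}^2$ does not follow from the route as written. The phrase ``in a manner that preserves the scale-by-scale localization'' is exactly where the argument breaks: the double smoothing $\zeta_j*\zeta_j$ destroys that localization.

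The fix is to not perform the Fubini. Keep the averaged factor on the $f$-side intact: the non-tangential maximal function of $u_j(y):=(\zeta_j*|f|^q)^{2/q}(y)$ \emph{is} bounded by $(M_qf)^2$, since for $|x-y|<2^{-j}$ the kernels $\zeta_j(y-\cdot)$ and $\zeta_j(x-\cdot)$ are pointwise comparable, and $\|(M_qf)^2\|_{L^1}\lesssim\|f\|_{L^2}^2$ as $q<2$. On the $g$-side apply Jensen as you do to get $\zeta_j*|\Delta_jg|^2$, and then show directly that $\{\zeta_j*|\Delta_jg|^2\,dx\}_j$ is still a Carleson measure sequence with norm $\lesssim\|g\|_{BMO}^2$: for a cube $Q$ of sidelength $2^{-j_0}$, write $\sum_{j\ge j_0}\int_Q\zeta_j*|\Delta_jg|^2=\sum_{j\ge j_0}\int|\Delta_jg(y)|^2(\zeta_j*\chi_Q)(y)\,dy$, split the $y$-integral over annuli $2^{k+1}Q\setminus 2^kQ$, use $(\zeta_j*\chi_Q)(y)\lesssim 2^{-k(n+\epsilon)}$ there, and invoke the Carleson property of $\{|\Delta_jg|^2\,dy\}$ on $2^{k+1}Q$ to pick up $2^{kn}|Q|\|g\|_{BMO}^2$; the geometric tail $\sum_k 2^{-k\epsilon}$ then converges. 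Carleson embedding applied to $u_j$ against this measure family gives \eqref{equ:ccc}.
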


\begin{lem}\label{lem:LittlewoodPaley}
Suppose $\{F_j\}\subset \mathcal{S}^{\prime}(\mathbb{R}^n)$
and suppose
there exists a constant $B>1$ such that
$\mathrm{supp}\, \widehat{F_j} \subset
\{\zeta \in \mathbb{R} ^n \mid  B^{-1}2^{j} \le \abs{\zeta}
\le B2^j\}$.
Then, for each $0<p<\infty$,
\[
\left\Vert\sum_{j}F_j\right\Vert_{H^p}
\lesssim \left\Vert\bigg(\sum_{j}|F_j|^2\bigg)^{1/2}
\right\Vert_{L^p}.
\]
\end{lem}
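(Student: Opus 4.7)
The plan is to combine the Littlewood--Paley square-function characterization of $H^p$ with the Peetre maximal function and the Fefferman--Stein vector-valued maximal inequality. First fix a smooth family $\{\Delta_k\}_{k\in\mathbb Z}$ with $\widehat{\Delta_k f}(\xi)=\varphi(2^{-k}\xi)\widehat f(\xi)$, where $\varphi$ is a radial bump supported in $\{1/2\le|\xi|\le 2\}$ forming a partition of unity away from the origin. The classical characterization
\[
\Big\|\sum_j F_j\Big\|_{H^p}\;\approx\;\Big\|\Big(\sum_{k\in\mathbb Z}\bigl|\Delta_k\bigl(\textstyle\sum_j F_j\bigr)\bigr|^2\Big)^{1/2}\Big\|_{L^p}\qquad (0<p<\infty)
\]
reduces the lemma to bounding the right-hand side by $\|(\sum_j|F_j|^2)^{1/2}\|_{L^p}$.

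Since $\widehat{F_j}$ lives in $\{B^{-1}2^j\le|\xi|\le B2^j\}$ while $\varphi(2^{-k}\cdot)$ lives on $\{2^{k-1}\le|\xi|\le 2^{k+1}\}$, there exists $N=N(B)\in\mathbb N$ with $\Delta_k F_j\equiv 0$ whenever $|j-k|>N$. Hence $\Delta_k(\sum_j F_j)=\sum_{|j-k|\le N}\Delta_k F_j$, and Cauchy--Schwarz applied to the sum of $2N+1$ nonzero terms gives
\[
\Big|\Delta_k\bigl(\textstyle\sum_j F_j\bigr)\Big|^2\;\lesssim\;\sum_{|j-k|\le N}|\Delta_k F_j|^2.
\]

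Next, choose $r$ with $0<r<\min(p,2)$. Since $F_j$ has Fourier support at scale $2^j$ and, for $|j-k|\le N$, the convolution kernel of $\Delta_k$ is a Schwartz function at the comparable scale $2^{-k}$, the standard Peetre / Plancherel--P\'olya pointwise inequality yields
\[
|\Delta_k F_j(x)|\;\lesssim\;\bigl(M(|F_j|^r)(x)\bigr)^{1/r},
\]
with implicit constants depending only on $B,n,r,N$, where $M$ is the Hardy--Littlewood maximal operator. Interchanging the order of summation and using the bounded overlap in $j$,
\[
\Big(\sum_{k}\bigl|\Delta_k(\textstyle\sum_j F_j)\bigr|^2\Big)^{1/2}\;\lesssim\;\Big(\sum_{j}\bigl(M(|F_j|^r)\bigr)^{2/r}\Big)^{1/2}.
\]

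Finally, because $p/r>1$ and $2/r>1$, the Fefferman--Stein vector-valued maximal inequality in $L^{p/r}(\ell^{2/r})$ yields
\[
\Big\|\Big(\sum_j\bigl(M(|F_j|^r)\bigr)^{2/r}\Big)^{1/2}\Big\|_{L^p}\;\lesssim\;\Big\|\Big(\sum_j|F_j|^2\Big)^{1/2}\Big\|_{L^p},
\]
which combined with the previous three displays completes the proof. The main technical step is the Peetre pointwise bound: one must verify that the convolution of $F_j$ (with frequency support at scale $2^j$) against a Schwartz kernel of scale $2^{-k}$ is dominated by the $r$-truncated maximal function with constants uniform in $|j-k|\le N$; this is classical but is the only nontrivial input. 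When $p>1$ one can take $r=1$ and apply the Fefferman--Stein inequality to $M$ directly, avoiding the $r$-truncation altogether.
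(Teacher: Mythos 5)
The paper does not give its own proof of this lemma; it simply refers the reader to Triebel~\cite{Triebel} and Grafakos~\cite{Grafk}. Your proof is correct and is precisely the standard argument found in those references: reduce via the Littlewood--Paley square-function characterization of $H^p$, exploit the bounded overlap in frequency to write $\Delta_k(\sum_j F_j)=\sum_{|j-k|\le N}\Delta_k F_j$, apply the Peetre/Plancherel--P\'olya pointwise bound $|\Delta_k F_j|\lesssim (M(|F_j|^r))^{1/r}$ for $0<r<\min(p,2)$, and finish with the Fefferman--Stein vector-valued maximal inequality.
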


The preceding lemma is well known in the Littlewood-Paley theory,
see for example \cite[5.2.4]{Triebel} and \cite[Lemma 7.5.2]{Grafk}.


\section{The proof of the main result}\label{section:ProofMain}

In this section, we prove the main theorem by considering four cases.
\subsection{The first case: $0<p_i\le 1$, $1\le i\le m$}\label{subsection:FirstCase}
This case is a consequence of  the following result   established in \cite{GraHan}:
\begin{thm}[\cite{GraHan}]\label{thm:MainSmallInd}
  Let $\frac{n}2<s_1,\ldots,s_m<\infty,$ $0<p_1,\ldots,p_m\le 1$,
and $\frac{1}{p_1}+ \cdots + \frac{1}{p_m} = \frac{1}{p}$.
Suppose \eqref{equ:SIndexCond} holds
for every nonempty subset $J\subset\left\{1,2,\ldots,m\right\}$.
Then the estimate \eqref{equ:TSigmBound} holds.
\end{thm}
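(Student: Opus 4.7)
The plan is to combine atomic decomposition of each Hardy space with a Littlewood--Paley decomposition of the symbol, and then to bound the resulting sum in $L^p$ via a $2^m$-fold case split on the geometry of the atoms.

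First, for each $f_i\in H^{p_i}$ with $\|f_i\|_{H^{p_i}}\le 1$ I would fix an atomic decomposition $f_i=\sum_{k_i}\lambda_{i,k_i}a_{i,k_i}$, where the $a_{i,k_i}$ are $L^\infty$-atoms supported in cubes $Q_{i,k_i}$ of side length $\ell_{i,k_i}$ with a large number $N$ of vanishing moments, and $\sum_{k_i}|\lambda_{i,k_i}|^{p_i}\lesssim 1$. I would then write $\sigma=\sum_{j\in\mathbb Z}\sigma_j$ with $\sigma_j(\xi)=\sigma(\xi)\widehat\psi(2^{-j}\xi)$, each $\sigma_j$ frequency-localized to $|\xi|\sim 2^j$. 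After a standard truncation of $\sigma_j$ so that its kernel $K_j$ is smooth and compactly supported, Proposition \ref{pro:SwitchSum} lets me pass the multi-indexed atomic sums through $T_{\sigma_j}$, so the goal reduces to bounding
\[
\Bigl\|\sum_{j\in\mathbb Z}\sum_{\vec k}\lambda_{1,k_1}\cdots\lambda_{m,k_m}\,T_{\sigma_j}(a_{1,k_1},\ldots,a_{m,k_m})\Bigr\|_{L^p}\lesssim A.
\]

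Second, I would obtain kernel estimates for $T_{\sigma_j}$ at scale $2^{-j}$. Applying Lemma \ref{lem:LInfL2} to $\sigma_j(2^j\cdot)$ (whose $W^{(s_1,\ldots,s_m)}$-norm is controlled by $A$) and Lemma \ref{lem:ProdSobN} yields decay of $K_j$ of the form $|K_j(y_1,\ldots,y_m)|\lesssim A\,2^{jmn}\prod_i(1+2^j|y_i|)^{-s_i}$, and Lemma \ref{lem:Tomita} furnishes the pointwise majorization $|T_{\sigma_j}(a_{1,k_1},\ldots,a_{m,k_m})(x)|\lesssim A\prod_i(\zeta_j\ast|a_{i,k_i}|^q)^{1/q}(x)$ for any $q<2$ with $sq>n$.

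Third, I would partition the pairs $(j,\vec k)$ by the subset $J=J(j,\vec k)\subset\{1,\ldots,m\}$ consisting of those indices $i$ with $2^{-j}\le \ell_{i,k_i}$ (i.e. the atom is "wide" relative to the kernel). For $i\in J$ I use the trivial bound $\|a_{i,k_i}\|_\infty\le\ell_{i,k_i}^{-n/p_i}$ and localize the estimate to a neighborhood of $Q_{i,k_i}$ using the kernel decay. For $i\notin J$ I exploit the moment condition: Taylor-expanding $K_j$ in the $y_i$-variable around the center of $Q_{i,k_i}$ produces a gain of $(2^j\ell_{i,k_i})^{N+n}$. Combining these factors on $L^2$ (via Theorem \ref{thm:L2estimate}) with a localization onto a controlled set and then taking the $L^p$ quasi-norm through Lemma \ref{lem:LpAvg} (crucial because $p\le 1$), I obtain a per-tuple estimate of the schematic form
\[
\bigl\|\lambda_{1,k_1}\cdots\lambda_{m,k_m}T_{\sigma_j}(a_{1,k_1},\ldots,a_{m,k_m})\bigr\|_{L^p(E)}\lesssim A\,\prod_{i}|\lambda_{i,k_i}|\cdot 2^{-j n(\sum_{i\in J}(s_i/n-1/p_i)+1/2)}\cdot(\text{geometric factors}),
\]
on the relevant localized region $E$.

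Finally, assembling over $j$ and $\vec k$: for each regime $J$, the geometric factors are summable over $\vec k$ by the maximal inequality for $H^{p_i}$, yielding $\prod_i\|f_i\|_{H^{p_i}}$, while the summability in $j$ holds precisely when $\sum_{i\in J}(s_i/n-1/p_i)>-1/2$, which is hypothesis \eqref{equ:SIndexCond}. The main obstacle, and the reason a separate paper is devoted to this case, is the combinatorial bookkeeping of the $2^m$ regimes together with the need to land on the \emph{sharp} exponent in each regime: one must arrange the Taylor gain on "narrow" indices, the $L^\infty$ loss on "wide" indices, and the product-Sobolev $L^2$ bound so that the critical power of $2^{-j}$ matches exactly $\sum_{i\in J}(s_i/n-1/p_i)+1/2$; a secondary technical point is that, since $p\le 1$, all the triangle-inequality steps need to be routed through Lemma \ref{lem:LpAvg} rather than through $L^p$ duality.
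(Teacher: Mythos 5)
The paper does not actually reprove this statement: Theorem~\ref{thm:MainSmallInd} is quoted verbatim from \cite{GraHan}. What the paper \emph{does} give, in the proof of Theorem~\ref{thm:HpiLInf} together with Lemma~\ref{lem:KeyLem1}, is the ``very similar'' argument for the mixed $H^{p_i}/L^\infty$ case, which the authors explicitly say reprises the \cite{GraHan} proof. Judging your proposal against that template, your high-level ingredients are the same (atomic decomposition, Littlewood--Paley decomposition $\sigma=\sum_j\sigma_j$, Taylor/moment gains, Lemma~\ref{lem:LInfL2} and the product Sobolev $L^2$ theorem), but the organization and, more importantly, the crucial assembly step differ.

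First, the case split. You partition on the pair $(j,\vec k)$ according to whether $2^{-j}\le \ell_{i,k_i}$, so your regime index $J$ moves with $j$. The paper instead partitions \emph{space} by $J\subset\{1,\dots,m\}$ according to whether $x\in Q_{i,k_i}^{\ast}$ or not; within the far region it retains \emph{both} the raw kernel estimate $h_j^{(k,0)}$ and the Taylor-gain estimate $h_j^{(k,1)}$ and takes $\min\{h_j^{(k,0)},h_j^{(k,1)}\}$. The minimum automatically encodes the ``wide/narrow'' dichotomy you are trying to impose by hand, and makes the $\sum_{j}$ convergence follow cleanly from a choice of one set of exponents rather than a separate argument in each of your $2^m$ regimes. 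Your scale-based split is not wrong in spirit, but the $j$-dependence of the regime makes the bookkeeping for the $\ell^{p_i}$-sum over $\vec k$ considerably heavier and is a genuine deviation from the paper.

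Second, and this is the real gap: your plan to ``route all triangle-inequality steps through Lemma~\ref{lem:LpAvg}'' cannot cover the far-field contribution, because that lemma requires each $f_Q$ to be \emph{supported in} the cube $Q$, while $T_{\sigma_j}(a_{1,k_1},\dots,a_{m,k_m})$ has polynomially decaying tails outside $\bigcap Q_{i,k_i}^{\ast}$. A direct $L^{p_k}$ estimate of the tail factor $\langle 2^j(x-c_k)\rangle^{-s_k}$ over $(Q_k^{\ast})^c$ would require $s_k>n/p_k$, which is strictly stronger than the hypothesis $s_k/n-1/p_k>-1/2$. The whole content of Lemma~\ref{lem:KeyLem1} (and of the \cite{GraHan} proof you did not see) is the H\"older split $1/p_k=\alpha_k+\beta_k/2$: the decay factor is measured in $L^{1/\alpha_k}$ (so one only needs $\alpha_k<s_k/n$), while the other factor $\min\{h_j^{(k,0)},h_j^{(k,1)}\}^{\beta_k}$ lands in $L^{2/\beta_k}$ via the $L^2$ control coming from $\|\sigma_j(2^j\cdot)\|_{W^{(s_1,\dots,s_m)}}\le A$; and the admissibility of the exponents $(\alpha_k,\beta_k)$ is \emph{exactly} the condition \eqref{equ:SIndexCond}. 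Without this interpolation on the exponent, your claim that ``the geometric factors are summable over $\vec k$ by the maximal inequality for $H^{p_i}$'' does not follow from the stated hypotheses. To repair the proposal you would need to replace the Lemma~\ref{lem:LpAvg} localization in the far region by the construction of explicit majorants $b_i$ depending on $Q_i$ only (so that H\"older separates the $\vec k$-sums into $m$ independent $\ell^{p_i}$ sums), and prove $\|b_i\|_{L^{p_i}}\lesssim 1$ via that exponent split.
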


\subsection{The second case: $0<p_i\le 1$ or $p_i=\infty$}\label{subsection:SecondCase}
\begin{thm}\label{thm:HpiLInf}
  Let $\frac{n}2<s_1,\ldots,s_m<\infty,$ $0<p_1,\ldots,p_l\le 1$,
$1\le l< m$,
and $\frac{1}{p_1}+ \cdots + \frac{1}{p_l} = \frac{1}{p}$.
Suppose \eqref{equ:SIndexCond} holds
for every nonempty subset $J\subset\left\{1,2,\ldots,l\right\}$.
Then
  \begin{equation}\label{equ:HpiLInf}
  \norm{T_{\sigma}}_{H^{p_1}\times\cdots\times H^{p_l}\times L^{\infty}\times\cdots\times L^{\infty}\longrightarrow  L^p}\lesssim \sup_{j\in\mathbb Z}{\left\Vert\sigma(2^j\cdot)\widehat\psi\, \right\Vert}_{W^{(s_1,\ldots,s_m)}}.
  \end{equation}
\end{thm}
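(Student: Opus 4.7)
The strategy parallels the atomic decomposition argument used in \cite{GraHan} to establish Theorem \ref{thm:MainSmallInd}, with the last $m-l$ slots treated as passive bounded factors. The crucial observation is that hypothesis \eqref{equ:SIndexCond} involves only subsets $J\subset\{1,\ldots,l\}$, so the Sobolev orders $s_{l+1},\ldots,s_m$ need only exceed the baseline $n/2$ required by Theorem \ref{thm:L2estimate}. Consequently the bounded functions $f_{l+1},\ldots,f_m$ should contribute to the estimate only through the multiplicative factor $\prod_{j>l}\|f_j\|_{L^\infty}$, without ever being atomically decomposed.

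First I would expand each $f_i$ ($1\le i\le l$) as a sum of $L^\infty$-atoms $f_i=\sum_{k_i}\lambda_{i,k_i}a_{i,k_i}$ with a large number $N$ of vanishing moments, and invoke Proposition \ref{pro:SwitchSum} (after a routine smooth truncation of $\sigma$) to pull the sums outside $T_\sigma$. A Littlewood-Paley decomposition $\sigma=\sum_{j\in\mathbb Z}\sigma\,\widehat\psi(2^{-j}\cdot)$ then breaks $T_\sigma$ into pieces $T_{\sigma_j}$ whose kernels satisfy weighted $L^q$ bounds controlled by $A$ via Lemma \ref{lem:LInfL2}. For each atomic configuration $\vec k=(k_1,\ldots,k_l)$ I would fix a large concentric dilate $Q^*_{\vec k}$ of the smallest cube containing every $Q_{i,k_i}$ and split
\[
T_\sigma(a_{1,k_1},\ldots,a_{l,k_l},f_{l+1},\ldots,f_m)=\chi_{Q^*_{\vec k}}T_\sigma(\cdots)+\chi_{(Q^*_{\vec k})^c}T_\sigma(\cdots).
\]
The local piece on $Q^*_{\vec k}$ is controlled by Theorem \ref{thm:L2estimate} followed by Hölder's inequality: place the atom whose cube is largest in the $L^2$ slot while every remaining atom and every $f_j$ occupies an $L^\infty$ slot. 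The summation over $\vec k$ of this local part is then handled by Lemma \ref{lem:LpAvg}.

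The main work lies in the nonlocal piece on $(Q^*_{\vec k})^c$. There I would exploit the $N$ vanishing moments by Taylor-expanding each kernel $K_j$ in the variables $y_i$ about the center of $Q_{i,k_i}$, yielding factors $|y_i-c_{i,k_i}|^N$ that, via Lemma \ref{lem:LInfL2}, convert to positive powers of $2^j\ell(Q_{i,k_i})$. The Littlewood-Paley sum over $j$ then splits into regimes governed by the ordering of the $l+1$ scales $2^{-j},\ell(Q_{1,k_1}),\ldots,\ell(Q_{l,k_l})$; in each regime one applies moment cancellation only in a particular subset $J\subset\{1,\ldots,l\}$ of variables and uses only trivial $L^\infty$ control in the complementary variables, and the strict inequality in \eqref{equ:SIndexCond} for that specific $J$ produces a convergent geometric series. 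Final summation in $\vec k$ uses $\sum_{k_i}|\lambda_{i,k_i}|^{p_i}\lesssim \|f_i\|_{H^{p_i}}^{p_i}$ together with the $p$-quasi-triangle inequality (legitimate since $p\le 1$). The chief obstacle is precisely this combinatorial bookkeeping: every ordering of scales and every choice of $J$ must be calibrated so that \emph{some} inequality in \eqref{equ:SIndexCond} yields the needed positive exponent, which is why the full family of strict inequalities, rather than any single one, is required.
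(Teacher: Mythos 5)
Your overall strategy matches the paper's in broad outline (atomic decomposition for the $H^{p_i}$ slots, Proposition \ref{pro:SwitchSum}, a local/nonlocal split, Theorem \ref{thm:L2estimate} for the local piece, vanishing moments and Taylor expansion plus Lemma \ref{lem:LInfL2} for the nonlocal piece, and Lemma \ref{lem:LpAvg} to control the local sum), and your remark about the role of $s_{l+1},\dots,s_m$ is correct. However, the specific local/nonlocal decomposition you choose introduces a genuine gap. You take $Q^*_{\vec k}$ to be a dilate of the \emph{smallest cube containing every} $Q_{i,k_i}$, i.e.\ a bounding cube whose sidelength is comparable to the diameter of the whole configuration. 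The local piece is then supported on $Q^*_{\vec k}$, and even granting the pointwise average bound $\frac{1}{|Q^*_{\vec k}|}\int_{Q^*_{\vec k}}|T_\sigma(\cdots)|\lesssim A\prod_i|Q_{i,k_i}|^{-1/p_i}$, the resulting sum $\|\sum_{\vec k}|\lambda_{1,k_1}\cdots\lambda_{l,k_l}|\prod_i|Q_{i,k_i}|^{-1/p_i}\chi_{Q^*_{\vec k}}\|_{L^p}$ does not factorize over $i$ and cannot be bounded by $\prod_i\|f_i\|_{H^{p_i}}$. Indeed, if the $l$ atoms have a common small sidelength $\ell$ but their cubes are mutually at distance $D\gg\ell$, then $|Q^*_{\vec k}|\approx D^n$ and the local contribution for that single configuration carries a factor $D^{n/p}\ell^{-n/p}$ with $1/p>1/2$, which diverges as $D/\ell\to\infty$; the gain factor $(|Q_{\mathrm{largest}}|/|Q^*_{\vec k}|)^{1/2}$ coming from putting the largest atom in the $L^2$ slot is not enough to compensate.

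The paper avoids this by taking the local region to be $Q^*_{1,k_1}\cap\cdots\cap Q^*_{l,k_l}$ rather than a bounding cube. This intersection is empty unless all the atoms' dilated cubes overlap, and when nonempty it is contained simultaneously in every $Q^{**}_{i,k_i}$, so the characteristic function in the local sum can be replaced by the product $\prod_i\chi_{Q^{**}_{i,k_i}}$. After Lemma \ref{lem:LpAvg} and H\"older's inequality the sum factorizes into $\prod_i\|\sum_{k_i}|\lambda_{i,k_i}||Q_{i,k_i}|^{-1/p_i}\chi_{Q^{**}_{i,k_i}}\|_{L^{p_i}}$, which is where the atomic bound $\sum_{k_i}|\lambda_{i,k_i}|^{p_i}\lesssim\|f_i\|_{H^{p_i}}^{p_i}$ is used. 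Correspondingly, the paper's nonlocal region $(\bigcap_i Q^*_{i,k_i})^c$ is not the complement of a single big cube but is partitioned into regions $E_{J_0}=(\bigcap_{i\notin J_0}Q^*_{i,k_i})\setminus(\bigcup_{i\in J_0}Q^*_{i,k_i})$ according to which dilated cubes $x$ lies outside of. Taylor expansion and moment cancellation are applied only in the variables $i\in J_0$ (the others are kept as characteristic-function factors $b_i=|Q_i|^{-1/p_i}\chi_{Q^*_i}$), and a geometric-mean trick with carefully chosen exponents $\beta_k$ (making essential use of the strict inequality \eqref{equ:SIndexCond} for the subset $J_0$) replaces your proposed case analysis over orderings of the $l+1$ scales. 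Your intuition that different $J$'s must contribute in different parts of the nonlocal region is correct, but it must be implemented on a decomposition by "which cubes is $x$ outside of," not by a single bounding cube, precisely so that Taylor expansion is justified only where $x$ is genuinely far from the relevant atom and so that the local piece remains small enough to factorize.
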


\begin{proof}
The proof of this theorem is very similar to an analogous proof
of Theorem \ref{thm:MainSmallInd} that is given in \cite{GraHan},
but we retake it to provide certain detailed computations, adapted appropriately in our setting.

  By regularization (see \cite[Section 3]{GraHan}), we can always assume
that the inverse Fourier transform of $\sigma$ is smooth and
compactly supported.
The aim is to show that
  \begin{equation}\label{equ:TSigSmP}
    \norm{T_{\sigma}(f_1,\ldots,f_m)}_{L^p} \lesssim
A     \norm{f_1}_{H^{p_1}}\cdots\norm{f_l}_{H^{p_l}}
    \prod_{i=l+1}^m\norm{f_i}_{L^{\infty}}.
  \end{equation}
  Fix functions $f_i\in H^{p_i}.$
  Using atomic representations for $H^{p_i}$-functions, write
  $$
  f_i = \sum_{k_i\in\mathbb Z}\lambda_{i,k_i}a_{i,k_i},\quad 1\le i\le l,
  $$
  where $a_{i,k_i}$ are $L^\infty$-atoms for $H^{p_i}$ satisfying
  $$
  \supp(a_{i,k_i})\subset Q_{i,k_i},\quad \norm{a_{i,k_i}}_{L^{\infty}}\le {\left\vert{Q_{i,k_i}}\right\vert}^{-\frac{1}{p_i}},\quad \int_{Q_{i,k_i}}x^{\alpha}a_{i,k_i}(x)dx=0
  $$
  for ${\left\vert{\alpha}\right\vert}<N_i$ with $N_i$ large enough,
and $\sum_{k_i}{\left\vert{\lambda_{i,k_i}}\right\vert}^{p_i}
\le 2^{p_i}\norm{f_i}_{H^{p_i}}^{p_i}$.

  For a cube $Q $ we denote by $Q^*$ its dilation   by the factor $2\sqrt{n}.$
  Since $K=\sigma^{\vee}$ is smooth and compactly supported, Proposition \ref{pro:SwitchSum} yields that
  \begin{equation*}
  T_{\sigma}(f_1,\ldots,f_m)(x) = \sum_{k_1}\cdots\sum_{k_l}\lambda_{1,k_1}\ldots \lambda_{l,k_l} T_{\sigma}(a_{1,k_1},\ldots,a_{l,k_l},f_{l+1},\ldots,f_m)(x)
  \end{equation*}
  for a.e. $x\in\mathbb R^n.$  Now we can split $T_{\sigma}(f_1,\ldots,f_m)$
into two parts and estimate
  $$
  {\left\vert{T_{\sigma}(f_1,\ldots,f_m)(x)}\right\vert}\le G_1(x) +G_2(x),
  $$
  where
  $$
  G_1 = \sum_{k_1}\cdots\sum_{k_l}
{\left\vert{\lambda_{1,k_1}}\right\vert}\cdots
  {\left\vert{\lambda_{l,k_l}}\right\vert}
{\left\vert{T_{\sigma}(a_{1,k_1},\ldots,a_{l,k_l},f_{l+1},\ldots,f_m)}
\right\vert}\chi_{Q^*_{1,k_1}\cap\cdots\cap Q^*_{l,k_l}}
  $$
  and
  $$
  G_2 = \sum_{k_1}\cdots\sum_{k_l}{\left\vert{\lambda_{1,k_1}}\right\vert}\cdots
  {\left\vert{\lambda_{l,k_l}}\right\vert} {\left\vert{T_{\sigma}(a_{1,k_1},\ldots,a_{l,k_l},f_{l+1},\ldots,f_m)}
  \right\vert}\chi_{(Q^*_{1,k_1}\cap\cdots\cap Q^*_{l,k_l})^c} .
  $$

The first part $G_1(x)$ can be dealt via the argument in \cite{GrKal}
(reprised more clearly in \cite{GraHan}).
Suppose
the cubes $Q^*_{1,k_1}, \, \dots,\,Q^*_{l,k_l}$
satisfy $Q^*_{1,k_1}\cap\cdots\cap Q^*_{l,k_l} \neq \emptyset$.
From these cubes, choose a cube that has the minimum sidelength,
and denote it by
$R_{k_1,\ldots,k_l}$.
Then
$$
  Q^*_{1,k_1}\cap\cdots\cap Q^*_{l,k_l} \subset R_{k_1,\ldots,k_l}
  \subset Q^{\ast\ast}_{1,k_1}\cap\cdots\cap Q^{\ast\ast}_{l,k_l},
  $$
where $Q^{\ast\ast}_{i,k_i}$ denotes suitable dilation of $Q^{*}_{i,k_i}.$
We shall prove
\begin{equation}\label{equ:EstimateAvg}
\begin{split}
&   \frac{1}{\abs{R_{k_1,\ldots,k_l}}}
   \int_{R_{k_1,\ldots,k_l}}
   {\left\vert{T_{\sigma}(a_{1,k_1},\ldots,a_{l,k_l},f_{l+1},\ldots,f_m)(x)}\right\vert}\, dx
\\
&   \lesssim
   A\prod_{i=1}^l{\left\vert{Q_{i,k_i}}\right\vert}^{-\frac1{p_i}}
    \prod_{i=l+1}^m\norm{f_i}_{L^\infty}.
\end{split}
\end{equation}
To show this, assume without loss of generality
$R_{k_1,\ldots,k_l}=Q_{1,k_1}^{\ast}$.
Then, using  Theorem \ref{thm:L2estimate}, we have
  \begin{align*}
    &\int_{R_{k_1,\ldots,k_l}}
    {\left\vert{T_{\sigma}(a_{1,k_1},\ldots,a_{l,k_l},f_{l+1},\ldots,f_m)(x)}\right\vert}dx
    \\
    &\le \norm{T_{\sigma}(a_{1,k_1},\ldots,a_{l,k_l},f_{l+1},\ldots,f_m)
    }_{L^2}{\left\vert{R_{k_1,\ldots,k_l}}\right\vert}^{\frac12}\\
    &\lesssim A{\left\vert{R_{k_1,\ldots,k_l}}\right\vert}^{\frac12}\norm{a_{1,k_1}}_{L^2}
    \prod_{i=2}^l\norm{a_{i,k_i}}_{L^\infty}
    \prod_{i=l+1}^m\norm{f_i}_{L^\infty}
    \\
    &\le
    A{\left\vert{R_{k_1,\ldots,k_l}}\right\vert}^{\frac12}{\left\vert{Q_{1,k_1}}\right\vert}^{\frac12}
    \prod_{i=1}^l{\left\vert{Q_{i,k_i}}\right\vert}^{-\frac1{p_i}}
    \prod_{i=l+1}^m\norm{f_i}_{L^\infty}
    \\
    &\le
    A{\left\vert{R_{k_1,\ldots,k_l}}\right\vert}
    \prod_{i=1}^l{\left\vert{Q_{i,k_i}}\right\vert}^{-\frac1{p_i}}
    \prod_{i=l+1}^m\norm{f_i}_{L^\infty},
\end{align*}
which implies \eqref{equ:EstimateAvg}.
Now using Lemma \ref{lem:LpAvg}, the estimate \eqref{equ:EstimateAvg},
and H\"older's inequality, we obtain
\begin{align*}
    \norm{G_1}_{L^p}
   &\lesssim
   \bigg\Vert
   \sum_{k_1}\cdots\sum_{k_m}
   \bigg(\prod_{i=1}^{l} \abs{\lambda_{i,k_i}} \bigg)
    \frac{1}{\abs{R_{k_1,\ldots,k_l}}}
   \int_{R_{k_1,\ldots,k_l}}
   \\
   &\quad \quad \quad\quad
   \abs{T_{\sigma}  (a_{1,k_1},\ldots,a_{l,k_l},f_{l+1},\ldots,f_m)(x) }\, dx\,
   \chi_{R_{k_1,\ldots,k_l}}
   \bigg\Vert
   _{L^p}
   \\
   &\lesssim
   A\norm{
   \sum_{k_1}\cdots\sum_{k_m}
   \bigg(\prod_{i=1}^{l} \abs{\lambda_{i,k_i}}
   \abs{Q_{i,k_i}}^{-\frac1{p_i}}\chi_{Q^{\ast\ast}_{i,k_i}}\bigg)
    \prod_{i=l+1}^m\norm{f_i}_{L^\infty}
    }_{L^p}
\\
&=
A\norm{
\prod_{i=1}^{l}
\bigg(\sum_{k_i}
\abs{\lambda_{i,k_i}} \abs{Q_{i,k_i}}^{-\frac1{p_i}}
\chi_{Q^{\ast\ast}_{i,k_i}}\bigg)
}_{L^p}
\prod_{i=l+1}^m
\norm{f_i}_{L^\infty}
\\
&
\le
A\prod_{i=1}^{l}
\norm{
\sum_{k_i}
\abs{\lambda_{i,k_i}} \abs{Q_{i,k_i}}^{-\frac1{p_i}}
\chi_{Q^{\ast\ast}_{i,k_i}}
}_{L^{p_i}}
\prod_{i=l+1}^m
\norm{f_i}_{L^\infty}
\\
&\lesssim
A\prod_{i=1}^{l}
\norm{f_i}_{H^{p_i}}
\prod_{i=l+1}^m
\norm{f_i}_{L^\infty}.
\end{align*}
Thus we have
  \begin{equation}\label{equ:G1Part}
  \norm{G_1}_{L^{p}}\lesssim A\norm{f_1}_{H^{p_1}}\cdots\norm{f_m}_{H^{p_m}}.
  \end{equation}

  Now for the more difficult part, $G_2(x),$ we first restrict
  $x\in (\bigcap_{i\notin J}Q_{i,k_{i}}^*)\setminus(\bigcup_{i\in J}Q_{i,k_{i}}^*)$ for some nonempty subset $J\subset\left\{1,2,\ldots,l\right\}.$ To continue, we   need the following lemma whose proof is given in Section \ref{section:ProofLem}.
\begin{lem}\label{lem:KeyLem1}
    Let $\frac{n}2<s_1, \dots, s_m<\infty$,
$0<p_1, \dots, p_l\le 1$, $1\le i\le l<m$, and suppose
\eqref{equ:SIndexCond} holds
for all $J\subset\set{1,\ldots,l}$.
Let ${\sigma}$ be a function satisfying \eqref{equ:SigmCond}.
Suppose $a_i$ ($i=1,\ldots,l$) are atoms supported in the
cube $Q_i$ such that
   $$
   \norm{a_i}_{L^{\infty}}\le {\left\vert{Q_i}\right\vert}^{-\frac1{p_i}},\qquad \int_{Q_i}x^{\alpha}a_i(x)dx=0
   $$
   for all ${\left\vert{\alpha}\right\vert}< N_i$ with $N_i$
   sufficiently large.
   Fix a non-empty subset $J_0\subset\left\{1,\ldots,l\right\}.$
  Then there exist positive functions $b_1,\ldots,b_{l}$
such that $b_i$ depends only on
$m$, $n$, $(s_i)_{i=1, \dots, m}$, $(p_i)_{i=1, \dots, m}$,
$\sigma$, $J_0$, $N_i$, and $Q_i$,
and
    \begin{equation*}
    {\left\vert{T_{\sigma}(a_1,\ldots,a_l,f_{l+1},\ldots,f_m)(x)}
\right\vert}
\lesssim A\,
b_1(x)\cdots b_{l}(x)\norm{f_{l+1}}_{{L^\infty}}
\cdots\norm{f_{m}}_{{L^\infty}}
    \end{equation*}
    for all $x\in (\bigcap_{i\notin J_0}Q_i^*)
\setminus (\bigcup_{i\in J_0}Q_i^*),$
    and
    $\norm{b_i}_{L^{p_i}}\lesssim 1$,  $1\le i\le l .$
  \end{lem}
For each nonempty subset $J$ of $\{1,2,\dots,l\}$,  Lemma \ref{lem:KeyLem1}
guarantees the existence of positive functions $b_{1,k_1}^J,\ldots,b_{l,k_l}^J$
depending on $Q_{1,k_1},\ldots,Q_{l,k_l}$ respectively, such that
  \begin{equation*}
  {\left\vert{T_{\sigma}(a_{1,k_1},\ldots,a_{l,k_l},
  f_{l+1},\ldots,f_m)}\right\vert} \lesssim A\,  b_{1,k_1}^J \cdots b_{l,k_l}^J
  \prod_{i=l+1}^{\infty}\norm{f_i}_{L^{\infty}}
  \end{equation*}
  for all $x\in (\bigcap_{i\notin J}Q_{i,k_{i}}^*)\setminus(\bigcup_{i\in J}Q_{i,k_{i}}^*)$
  and $\big\|{b_{i,k_i}^J}\big\|_{L^{p_i}} \lesssim 1.$ Now set
  $$
  b_{i,k_i} = \sum_{\emptyset \ne J\subset\left\{1,2,\ldots,l\right\}}b_{i,k_i}^J.
  $$
  Then
  \begin{equation}\label{equ:BiComp}
  {\left\vert{T_{\sigma}(a_{1,k_1},\ldots,a_{l,k_l},
  f_{l+1},\ldots,f_m)}\right\vert}
  \chi_{(Q^*_{1,k_1}\cap\ldots\cap Q^*_{l,k_l})^c}
  \lesssim A\,  b_{1,k_1} \cdots b_{l,k_l}\prod_{i=l+1}^{\infty}\norm{f_i}_{L^{\infty}}
  \end{equation}
  and $\norm{b_{i,k_i}}_{L^{p_i}} \lesssim 1.$  Estimate   \eqref{equ:BiComp} yields
  $$
  G_2(x)\lesssim A\prod_{i=1}^l\left( \sum_{k_i}{\left\vert{\lambda_{i,k_i}}\right\vert}b_{i,k_i}(x) \right)\prod_{i=l+1}^{\infty}\norm{f_i}_{L^{\infty}}.
  $$
  Then apply H\"older's inequality to deduce  that
  \begin{equation}\label{equ:G2Part}
  \norm{G_2}_{L^p}\lesssim A\, \norm{f_1}_{H^{p_1}}\cdots\norm{f_l}_{H^{p_l}}
  \prod_{i=l+1}^{\infty}\norm{f_i}_{L^{\infty}}.
  \end{equation}
  Combining \eqref{equ:G1Part} and \eqref{equ:G2Part},
we obtain
\eqref{equ:TSigSmP} as needed.
This completes the proof.
\end{proof}

\subsection{The third case: $0<p_i\le 1$ or $2\le p_i\le \infty$}\label{subsection:ThirdCase}
\begin{thm}\label{thm:HpiLpiLp}
  Let $\frac{n}2<s_1,\ldots,s_m<\infty,$ $p_1, \ldots, p_m\in (0,1]\cup[2,\infty]$,
  $0<p<\infty$, and $\frac{1}{p_1}+ \cdots + \frac{1}{p_m} = \frac{1}{p}.$
  Assume there exists at least one index $i$ such that $p_i \in (0,1]$
  and also assume the condition \eqref{equ:SIndexCond} holds
  for every nonempty subset $J\subset\left\{1,2,\ldots,m\right\}.$
  Then the estimate \eqref{equ:TSigmBound} holds.
\end{thm}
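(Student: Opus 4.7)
The plan is to derive Theorem \ref{thm:HpiLpiLp} from Theorem \ref{thm:MainSmallInd} (first case) and Theorem \ref{thm:HpiLInf} (second case) via the multilinear interpolation Theorem \ref{thm:CalTorInterp}. Partition the index set into $I_1 = \{i : p_i \in (0, 1]\}$ (nonempty by hypothesis), $I_2^{\mathrm{fin}} = \{i : p_i \in [2, \infty)\}$, and $I_\infty = \{i : p_i = \infty\}$. If $I_2^{\mathrm{fin}} = \emptyset$, the target configuration reduces to the setting of Theorem \ref{thm:HpiLInf} and we are done; assume therefore $I_2^{\mathrm{fin}} \ne \emptyset$.

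Fix $\theta \in (0, 1/\max_{i \in I_2^{\mathrm{fin}}} p_i]$ and set $q_i = \theta p_i \in (0, 1]$ for each $i \in I_2^{\mathrm{fin}}$. At endpoint $A$ I replace $p_i$ by $\infty$ for $i \in I_2^{\mathrm{fin}}$, keeping the other coordinates; this places us in the setting of Theorem \ref{thm:HpiLInf} with Hardy indices $I_1$. At endpoint $B$ I replace $p_i$ by $q_i$ for $i \in I_2^{\mathrm{fin}}$; this falls under Theorem \ref{thm:MainSmallInd} when $I_\infty = \emptyset$, and under Theorem \ref{thm:HpiLInf} with Hardy indices $I_1 \cup I_2^{\mathrm{fin}}$ otherwise. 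The identity $1/p_i = (1-\theta)\cdot 0 + \theta \cdot 1/q_i$ confirms that these endpoints interpolate (with parameter $\theta$) to the target exponents. For smoothness I choose $s_{1,k}, s_{2,k} > n/2$ with $s_k = (1-\theta)s_{1,k} + \theta s_{2,k}$, and then invoke Theorem \ref{thm:CalTorInterp} to conclude.

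The hard part is verifying condition \eqref{equ:SIndexCond} at both endpoints. Endpoint $A$ requires the condition only for $J \subset I_1$, which follows from the target condition by taking $s_{1,k} = s_k$ for $k \in I_1$. Endpoint $B$ is more delicate: for $J$ with $J \cap I_2^{\mathrm{fin}} \ne \emptyset$ the enlarged $1/q_i = 1/(\theta p_i)$ terms force $s_{2,k}$ appreciably larger than $s_k$, and by the interpolation identity this forces $s_{1,k}$ correspondingly smaller than $s_k$; the strict margin in the target \eqref{equ:SIndexCond} must absorb this shift. The main obstacle is therefore to choose $\theta$, the reassignments $q_i$, and the smoothness allocations $(s_{1,k}, s_{2,k})$ so that both endpoint conditions hold for every relevant $J$ simultaneously, given only the strict target inequalities. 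In borderline cases the single-step scheme above may not suffice, and the argument is expected to proceed by induction on $|I_2^{\mathrm{fin}}|$: move one index $i_0 \in I_2^{\mathrm{fin}}$ to $\infty$ (for endpoint $A$) or to some $q \le 1$ (for endpoint $B$), reducing $|I_2^{\mathrm{fin}}|$ by one and invoking the inductive hypothesis at each endpoint.
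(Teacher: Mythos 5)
The proposal is based on interpolating between an $L^\infty$ endpoint (Theorem \ref{thm:HpiLInf}) and an $L^{q_i}$ endpoint with $q_i\le 1$ (Theorem \ref{thm:MainSmallInd} or \ref{thm:HpiLInf}) to reach $L^{p_i}$ with $p_i\in[2,\infty)$. This does not work at the boundary of \eqref{equ:SIndexCond}, and the gap cannot be repaired by cleverness in choosing $\theta$, the $q_i$, or the smoothness allocation, nor by induction on $|I_2^{\mathrm{fin}}|$. The reason is structural: for $p_i\ge 2$ the contribution $\dfrac{s_i}{n}-\dfrac1{p_i}$ to the sum in \eqref{equ:SIndexCond} is nonnegative, so subsets $J$ meeting $I_2^{\mathrm{fin}}$ impose no constraint at the target; but at your endpoint $B$ you replace $p_i$ by $q_i\le 1$, and then $\dfrac{s_{2,i}}{n}-\dfrac1{q_i}$ is typically negative, so every $J$ meeting $I_2^{\mathrm{fin}}$ becomes binding. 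Satisfying those new constraints forces $s_{2,i}$ up, which (via $s_i=(1-\theta)s_{1,i}+\theta s_{2,i}$ and $s_{1,i}>n/2$) forces $s_i$ itself above a threshold strictly greater than $n/2$. Concretely, with $m=2$, $p_1=1$, $p_2=2$: the best choice is $\theta=1/2$, $q_2=1$, and then endpoint $B$ needs $s_{2,1}+s_{2,2}>3n/2$ together with $s_{2,1}<2s_1-n/2$ and $s_{2,2}<2s_2-n/2$ (to leave room for $s_{1,k}>n/2$), which forces $s_1+s_2>5n/4$. But the hypothesis only gives $s_1+s_2>n$; in the borderline regime $s_1=s_2=n/2+\epsilon$ the scheme is infeasible.

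The paper avoids this by \emph{not} interpolating across the $p_i=1$ threshold. Instead it proves directly the intermediate endpoint estimate \eqref{equ:HpiL2Linf}, with all finite $p_i\ge 2$ replaced by $2$, by a cone decomposition of $\sigma$ and the pointwise Lemmas \ref{lem:KeyLem2} and \ref{lem:KeyLem3} (which rely on the square-function, maximal-function, and Carleson-measure machinery of Lemmas \ref{lem:Tomita}, \ref{lem:CarlesonMeasure}, \ref{lem:LittlewoodPaley}). It then interpolates between \eqref{equ:HpiL2Linf} and \eqref{equ:HpiLInf} via Theorem \ref{thm:CalTorInterp}. Crucially, both of those endpoints have $p_i\in\{2,\infty\}$ on $I_2^{\mathrm{fin}}$, so $1/p_i\le 1/2<s_i/n$ there, and \eqref{equ:SIndexCond} imposes exactly the same constraints on $(s_k)$ at both endpoints and at the target; one can then fix the $s_k$'s and interpolate in the $p_i$-variables alone without any smoothness loss. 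That $L^2$-endpoint input is the missing ingredient in your argument.
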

\begin{proof}
In addition to the assumptions of the theorem,
we also assume
there exists at least one $i$ such that $p_i \in [2,\infty),$ since
otherwise the claim is already covered by Theorems \ref{thm:MainSmallInd} or \ref{thm:HpiLInf}.
Thus without loss of generality, we may assume that
$0<p_1,\ldots,p_l\le 1,$ $2\le p_{l+1},\ldots,p_\rho<\infty$, $p_{\rho+1}=\cdots=p_m=\infty$,
$1\le l < \rho \le m$, and
$\frac{1}{p_1}+ \cdots + \frac{1}{p_\rho} = \frac{1}{p}.$
Our goal is to establish the estimate
\begin{equation}\label{equ:HpiLpiLinf}
\norm{T_{\sigma}}_{H^{p_1}\times\cdots\times H^{p_l}
\times L^{p_{l+1}}\times\cdots\times L^{p_\rho}
\times L^\infty\times\cdots\times L^\infty
\longrightarrow  L^p}
\lesssim \sup_{j\in\mathbb Z}
{\left\Vert\sigma(2^j\cdot)\widehat\psi\, \right\Vert}_{W^{(s_1,\ldots,s_m)}}.
\end{equation}
Assume momentarily the validity of the following estimate
  \begin{equation}\label{equ:HpiL2Linf}
  \norm{T_{\sigma}}
  _{H^{p_1}\times\cdots\times H^{p_l}\times
  \underset{(\rho-l)-\mbox{\tiny times}}
  {\underbrace{L^{2}\times\cdots\times L^{2}}}\times
  \underset{(m-\rho)-\mbox{\tiny times}}
  {\underbrace{L^\infty\times\cdots\times L^\infty}}
  \longrightarrow  L^p}
  \lesssim \sup_{j\in\mathbb Z}
  {\left\Vert\sigma(2^j\cdot)\widehat\psi\, \right\Vert}
  _{W^{(s_1,\ldots,s_m)}}.
  \end{equation}
Then using Theorem \ref{thm:CalTorInterp} to
interpolate between \eqref{equ:HpiL2Linf} and \eqref{equ:HpiLInf},
we obtain the estimate \eqref{equ:HpiLpiLinf} as required.
(In fact, since the condition \eqref{equ:SIndexCond}
with $(p_i)_{i=1,\dots,m}$ in the estimates of \eqref{equ:HpiLInf},
\eqref{equ:HpiLpiLinf}, and \eqref{equ:HpiL2Linf}
give the same restriction on $(s_i)_{i=1,\dots, m}$,
in order to deduce \eqref{equ:HpiLpiLinf} from
\eqref{equ:HpiL2Linf} and \eqref{equ:HpiLInf},
we may fix $(s_i)_{i=1,\dots, m}$ and could
use the usual real or complex interpolation for linear operators.)
Thus it suffices to prove \eqref{equ:HpiL2Linf}.
In the rest of the proof, we assume $p_{l+1}=\cdots=p_{\rho}=2$.

Before we proceed to the proof of \eqref{equ:HpiL2Linf},
we shall see that it is sufficient to consider $\sigma$ that has support
in some cone.
To see this, for
$\eta =(\eta_1, \ldots, \eta_m)\in  {\mathbb R}^{mn}$,
consider the $m+1$ vectors
$
\eta_1,\, \ldots,\, \eta_m,\,
\eta_{m+1}=\sum_{i=1}^{m}\eta_i \in {\mathbb R}^{n}. $
If $\eta $ belongs to the unit sphere
$\Sigma = \{\eta \in \mathbb R ^{mn} \mid |\eta|=1\}$,
at least two of these $m+1$ vectors are not zero.
Hence, by the compactness of $\Sigma$, there exists a constant $a>0$
such that $\Sigma$ is covered by the $\binom{m+1}{2}$ open sets
$$
V(k_1, k_2) = \{\eta \in \Sigma :\,\, \abs{\eta_{k_1}}>a, \; \abs{\eta_{k_2}}>a\},
\quad 1\le k_1 <k_2\le m+1.
$$
We take a smooth partition of unity $\{\varphi_{k_1, k_2}\}$ on $\Sigma$ such that
$\mathrm{supp}\, \varphi_{k_1, k_2}\subset V(k_1, k_2)$ and decompose the multiplier
$\sigma$ as
$$
\sigma (\xi)
=\sum_{1\le k_1 <k_2\le m+1}
\sigma (\xi) \varphi_{k_1, k_2} (\xi / \abs{\xi})
=\sum_{1\le k_1 <k_2\le m+1}
\sigma _{k_1, k_2}(\xi).
$$
Then
$$
\mathrm{supp}\, \sigma_{k_1, k_2}
\subset \Gamma (V(k_1, k_2))
= \{\xi \in \mathbb R ^{mn} \setminus \{0\}
:\,\, \xi /\abs{\xi} \in V(k_1, k_2)\}
$$
and Lemma \ref{lem:ProdSobN} gives
$$
  \sup_{j\in\mathbb Z}
  \left\Vert{\sigma _{k_1, k_2}
  (2^{j}\cdot) \widehat{\psi}}\,
  \right\Vert_{W^{(s_1,\ldots,s_m)}}
  \lesssim
  \sup_{j\in\mathbb Z}
  \left\Vert{\sigma(2^{j}\cdot)\widehat\psi}\,
  \right\Vert_{W^{(s_1,\ldots,s_m)}}.
$$
The estimate \eqref{equ:TSigmBound} follows if we prove it with
$\sigma_{k_1,k_2}$ in place of $\sigma$.
This means that it is sufficient to prove \eqref{equ:TSigmBound}
under the additional assumption that
\begin{equation}\label{equ:support}
\mathrm{supp}\, \sigma \subset \Gamma (V(k_1, k_2))
\end{equation}
for some $1\le k_1 <k_2\le m+1$.

To simplify notation, we also assume
\begin{equation}\label{equ:normalize}
\sup_{j\in\mathbb Z}
  \left\Vert{\sigma (2^{j}\cdot) \widehat{\psi}}\,
  \right\Vert_{W^{(s_1,\ldots,s_m)}}
  = 1
\end{equation}
and  write
\begin{equation}\label{equ:SigmaDecomposition}
\sigma= \sum_{j\in \mathbb Z} \sigma_{j},
\quad
\sigma_{j} (\xi) = \sigma (\xi) \widehat{\psi} (2^{-j} \xi).
\end{equation}

We shall divide the proof into two cases.
First case: $\sigma$ satisfies \eqref{equ:support} with
$1\le k_1 < k_2 \le m$.
Second case: $\sigma$ satisfies \eqref{equ:support} with
$1\le k_1 \le m$ and $k_2=m+1$.
In the first case, we shall directly prove the estimate
\begin{equation}\label{equ:FirstGoal}
\norm{
T_{\sigma} (f_1, \ldots, f_m)
}_{L^p}
\lesssim \prod_{i=1}^{m}
\left\Vert
f_i
\right\Vert_{H^{p_i}}.
\end{equation}
In the second case, we shall use a Littlewood-Paley function.
Notice that, in the second case, the support of the Fourier transform of
$T_{\sigma_{j}} (f_1, \ldots, f_m) $ is included in the annulus
$\{\xi \in \mathbb R ^{n} :\,\, B^{-1} 2^{j} \le \abs{\xi} \le B 2^{j}\}$
with some constant $B>1$.
Hence, by Lemma \ref{lem:LittlewoodPaley},
we have
\begin{equation}\label{equ:LittlewoodPaley}
\left\Vert
T_{\sigma}(f_1, \ldots, f_m)
\right\Vert_{H^p}
\lesssim
\left\Vert
\bigg(
\sum_{j\in \mathbb{Z}}
\abs{T_{\sigma_j}(f_1, \ldots, f_m)}
^{2}
\bigg)^{1/2}
\right\Vert_{L^p}.
\end{equation}
Thus, in the second case,
we shall consider the function
$$
GT_{\sigma}(f_1, \ldots, f_m)
=\bigg(
\sum_{j\in \mathbb{Z}}
\abs{T_{\sigma_j}(f_1, \ldots, f_m)}
^{2}
\bigg)^{1/2}
$$
and prove the estimate
\begin{equation}\label{equ:SecondGoal}
\norm{
GT_{\sigma} (f_1, \ldots, f_m)
}_{L^p}
\lesssim \prod_{i=1}^{m}
\left\Vert
f_i
\right\Vert_{H^{p_i}},
\end{equation}
which combined with \eqref{equ:LittlewoodPaley} implies
\eqref{equ:FirstGoal}.

The essential part of the proofs of
\eqref{equ:FirstGoal} and \eqref{equ:SecondGoal} are
given in the following two lemmas.

\begin{lem}\label{lem:KeyLem2}
  Let $\frac{n}2<s_1,\ldots,s_m<\infty,$
  $0<p_1, \ldots, p_l \le 1$,
  $p_{l+1}, \ldots, p_{\rho} =2$,
  $p_{\rho+1}, \ldots, p_{m} =\infty$,
  $1\le l < \rho \le m$,
  and suppose \eqref{equ:SIndexCond}
  holds for every nonempty subset $J\subset \{1,\ldots, l\}$.
  Let $a_i$, $1\le i \le l$, be $H^{p_i}$ atoms such that
  $$
  \mathrm{supp}\, a_i \subset Q_i,
  \quad
  \norm{a_i}_{L^\infty}
  \le \abs{Q_i}^{-1/p_i},
  \quad
  \int a_i (x) x^{\alpha}\, dx =0
  $$
  for $\abs{\alpha}< N_i$, where $N_i$ is a sufficiently
  large positive integer
  and $Q_i$ is a cube.
  Let $f_{l+1}, \ldots, f_{\rho}\in L^2$
  and $f_{\rho+1}, \ldots, f_{m}\in L^\infty$.
  Finally suppose $\sigma$ satisfies \eqref{equ:normalize}
  and \eqref{equ:support} with some
  $1\le k_1<k_2\le m$.
  Then there exist functions
  $b_1, \ldots, b_l$ and $\widetilde{f}_{l+1}, \ldots, \widetilde{f}_{\rho}$
  such that
  \begin{equation}\label{equ:pointwiseFirstCase}
  \abs{T_{\sigma}(a_1,\ldots, a_l, f_{l+1}, \ldots, f_{m})(x)}
  \lesssim
  \prod_{i=1}^{l} b_i(x)\cdot
  \prod_{i=l+1}^{\rho} \widetilde{f}_i(x)
  \cdot
  \prod_{i=\rho+1}^{m} \norm{f_i}_{L^\infty};
  \end{equation}
  the function $b_i$ depends only on
  $m$, $n$, $(s_i)_{i=1,\dots, m}$, $(p_i)_{i=1,\dots, m}$,
  $\sigma$, $i$, $a_i$,
  and $(f_i)_{i=\rho+1, \dots,m}$;
  the function $\widetilde{f}_i$
  depends only on
  $m$, $n$, $(s_i)_{i=1,\dots, m}$,
  $i$, $f_i$,
  and $(f_i)_{i=\rho+1, \dots,m}$;
  and they satisfy the estimates
  $\norm{b_i}_{L^{p_i}}\lesssim 1$
  and $\| \widetilde{f}_i  \|_{L^{2}}\lesssim \norm{f_i}_{L^2}$.
\end{lem}
\begin{lem}\label{lem:KeyLem3}
  Let $s_i$, $p_i$, $a_i$, and $f_i$ be the same as
in Lemma \ref{lem:KeyLem2}.
  Suppose $\sigma$ satisfies \eqref{equ:normalize} and
  \eqref{equ:support} with some $1\le k_1\le m$ and $k_2=m+1$.
  Then there exist functions
  $b_1, \ldots, b_l$ and $\widetilde{f}_{l+1}, \ldots, \widetilde{f}_{\rho}$
  that satisfy
  $$
  GT_{\sigma}(a_1,\ldots, a_l, f_{l+1}, \ldots, f_{m})(x)
  \lesssim
  \prod_{i=1}^{l} b_i(x)\cdot
  \prod_{i=l+1}^{\rho} \widetilde{f}_i(x)
  \cdot
  \prod_{i=\rho+1}^{m} \norm{f_i}_{L^\infty}
  $$
  and have the same properties as
  in Lemma \ref{lem:KeyLem2}.
  \end{lem}
The proofs of these lemmas will be given in
Section \ref{section:ProofLem}.
We shall continue the proof of Theorem \ref{thm:HpiLpiLp}.
To utilize the above lemmas, we decompose
$f_i\in H^{p_i}$, $1\le i\le l$, into atoms as
$f_i = \sum_{k_i\in\mathbb Z}\lambda_{i,k_i}
  a_{i,k_i}$
  with $\lambda_{i,k_i}$, $a_{i,k_i}$, and the cubes
  $Q_{i,k_i}$ being the same as in the proof of
  Theorem \ref{thm:HpiLInf}.

Consider the first case where
$\sigma$ satisfies \eqref{equ:support} with
$1\le k_1 < k_2 \le m$.
In this case, Lemma \ref{lem:KeyLem2} yields functions
$b_{i,k_i}$ ($1\le i\le l$, $k_i\in \mathbb Z$)
and
$\widetilde{f}_i$ ($l +1 \le i \le \rho$) such that
  $$
  \abs{T_{\sigma}(a_{1,k_1},\ldots, a_{l,k_l},
f_{l+1}, \ldots, f_{m})(x)}
  \lesssim
  \prod_{i=1}^{l} b_{i,k_i}(x)\cdot
  \prod_{i=l+1}^{\rho} \widetilde{f}_i(x)
  \cdot
  \prod_{i=\rho+1}^{m} \norm{f_i}_{L^\infty}
  $$
and
$\norm{b_{i,k_i}}_{L^{p_i}}\lesssim 1$
and
$ \|{\widetilde{f}_i}\|_{L^{2}}\lesssim \norm{f_i}_{L^2}$.
Notice that $b_{i,k_i}$ do not depend on
$k_j$ with $j \neq i$ and
$\widetilde{f}_i$ do not depend on $k_1, \ldots, k_l$.
Hence, by the multilinear property of the operator
$T_{\sigma}$, we have
\begin{align*}
&
\abs{
T_{\sigma}(f_1, \ldots, f_m)(x)
}
\\
&\lesssim
\sum_{k_1}\cdots\sum_{k_l}
\abs{
\lambda_{1,k_1}\ldots \lambda_{l,k_l}
}
\prod_{i=1}^{l} b_{i,k_i}(x)\cdot
\prod_{i=l+1}^{\rho} \widetilde{f}_i(x)
\cdot
\prod_{i=\rho+1}^{m} \norm{f_i}_{L^\infty}
\\
&=
\prod_{i=1}^{l}
\bigg(
\sum_{k_i}
\abs{
\lambda_{i,k_i}}
b_{i,k_i}(x)
\bigg)
\cdot
\prod_{i=l+1}^{\rho} \widetilde{f}_i(x)
\cdot
\prod_{i=\rho+1}^{m} \norm{f_i}_{L^\infty}.
\end{align*}
(We omit necessary a limiting argument to the treat infinite sum,
which could be achieved with the aid of
Proposition \ref{pro:SwitchSum}.)
For $1\le i\le l$, we have
\begin{equation*}
\norm{
\sum_{k_i}
\left\vert
\lambda_{i,k_i}\right\vert
b_{i,k_i}
}_{L^{p_i}}^{p_i}
\le
\sum_{k_i}
\abs{\lambda_{i,k_i}}^{p_i}
\norm{b_{i,k_i}}_{L^{p_i}}^{p_i}
\lesssim
\sum_{k_i}
\abs{\lambda_{i,k_i}}^{p_i}
\lesssim
\norm{f_i}_{H^{p_i}}^{p_i}.
\end{equation*}
The above pointwise inequality and
H\"older's inequality
now give \eqref{equ:FirstGoal}.

Next consider the second case
where $\sigma$ satisfies \eqref{equ:support} with
$1\le k_1\le m$ and $k_2=m+1$.
By the sublinear property of square function,
we have
\begin{equation*}
GT_{\sigma}(f_1, \ldots, f_m)(x)
\le
\sum_{k_1}\cdots\sum_{k_l}
\abs{
\lambda_{1,k_1}\cdots \lambda_{l,k_l}
}
GT_{\sigma}(a_{1,k_1},\ldots, a_{l,k_l},
f_{l+1}, \ldots, f_{m})(x).
\end{equation*}
(Again we omit the necessary limiting argument.)
Hence, using Lemma \ref{lem:KeyLem3} and arguing in the same way
as in the first case,
we obtain \eqref{equ:SecondGoal}.
Thus the proof of Theorem \ref{thm:HpiLpiLp} is reduced to
Lemmas \ref{lem:KeyLem2} and \ref{lem:KeyLem3}.
\end{proof}

\subsection{The last case: $0<p_i \le \infty$.}\label{subsection:LastCase}
In this subsection, we shall prove the estimate \eqref{equ:TSigmBound}
for the entire range $0<p_i \le \infty$.
Since the necessity of the conditions
$s_i \ge n/2$ and \eqref{equ:SIndexCondGe} has already been shown
in \cite[Theorem 5.1]{GraHan},
this will complete the proof of Theorem \ref{thm:General}.
To simplify notation, we use the letters
${\bf s}$ and ${\bf p}$ to denote
$(s_1, \dots, s_m)$
and $(p_1, \dots, p_m)$, respectively.

We shall slightly change the formulation
of the claim of Theorem \ref{thm:General}.
We assume
$0<p_1, \dots , p_m\le \infty$,
\begin{equation}\label{equ:halfnGe}
\infty>s_1, \dots, s_m \ge n/2,
\end{equation}
and assume they satisfy
\eqref{equ:SIndexCondGe} for all
nonempty subset $J\subset \{1, \dots, m\}$.
We shall prove the estimate
\begin{equation}\label{equ:TSigmBoundEpsilon}
\norm{
T_{\sigma}
}
_{H^{p_1}\times \cdots \times H^{p_m}\to L^p}
\lesssim
\sup_{j\in {\mathbb Z}}
\norm{\sigma (2^{j} \cdot )\widehat{\psi}}\,
_{W^{(s_1+\epsilon, \cdots, s_m + \epsilon)}}
\end{equation}
holds for every $\epsilon>0$,
where $1/p=1/p_1+ \dots + 1/p_m$
and the space $L^p$ should be replaced by $BMO$
if $p_1=\dots =p_m=p=\infty$.
This is equivalent to the estimate
given in Theorem \ref{thm:General}.
The proof will be given in two steps.

In the first step, we fix $\bf s$ satisfying
\eqref{equ:halfnGe} and consider the set
$\Delta ({\bf s})$ that consists of all
$(1/p_1, \dots, 1/p_m)\in [0,\infty)^m$ such that
the condition \eqref{equ:SIndexCondGe} holds for all
nonempty subset $J\subset \{1, \dots, m\}$.
We prove the following lemma.

\begin{lem}\label{lem:DeltaConvex}
If ${\bf s}$ satisfies \eqref{equ:halfnGe}, then
$\Delta ({\bf s})$ is the convex hull of the
point $(0,\dots, 0)$ and the points
$(1/p_1, \dots, 1/p_m)$ that satisfy
\begin{equation}\label{equ:extremeDelta1}
\text{ $1/p_i=0$ or $1/p_i=s_i/n$ or $1/p_i=s_i/n + 1/2$
for all $i$,}
\end{equation}
and
\begin{equation}\label{equ:extremeDelta2}
\text{ $1/p_i=s_i/n + 1/2$ for exactly one $i$.}
\end{equation}
\end{lem}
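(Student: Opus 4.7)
Writing $t_i=1/p_i$, the set $\Delta({\bf s})$ is the subset of $[0,\infty)^m$ cut out by the finitely many affine inequalities $\sum_{k\in J} t_k \le \frac{1}{n}\sum_{k\in J} s_k + \frac{1}{2}$ for nonempty $J\subset\{1,\dots,m\}$, whose singleton cases bound each coordinate. Hence $\Delta({\bf s})$ is a bounded convex polytope and equals the convex hull of its vertices. The plan is to enumerate these vertices and check that they coincide with the $m2^{m-1}+1$ points listed in the lemma.

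Let $t^*$ be a vertex, set $I=\{i:t_i^*=0\}$, and let $\mathcal{J}$ be the family of nonempty $J\subset\{1,\dots,m\}$ whose sum-constraint is saturated at $t^*$. The first observation is that every $J\in\mathcal{J}$ must be disjoint from $I$: if $i\in I\cap J$, then the equation $\sum_{k\in J} t_k^* = \frac{1}{n}\sum_{k\in J} s_k + \frac{1}{2}$ together with $t_i^*=0$ and $s_i/n\ge 1/2$ contradicts the constraint for $J\setminus\{i\}$ (or, when $|J|=1$, the equation itself). Restricting to the coordinates outside $I$ and reinstating them at the end, one may assume $I=\emptyset$, so that every $t_i^*>0$ and the vertex property becomes: the indicator vectors of $\mathcal{J}$ span $\mathbb{R}^m$.

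The key structural step is to show that $\mathcal{J}$ is a sublattice of $2^{\{1,\dots,m\}}$. Disjoint $J_1,J_2\in\mathcal{J}$ would give $\sum_{k\in J_1\cup J_2} t_k^*=\frac{1}{n}\sum_{k\in J_1\cup J_2}s_k+1$ upon adding their active equations, violating the constraint for $J_1\cup J_2$. For intersecting $J_1,J_2\in\mathcal{J}$, the identity $\sum_{k\in J_1\cup J_2}t_k^*+\sum_{k\in J_1\cap J_2}t_k^* = \sum_{k\in J_1}t_k^*+\sum_{k\in J_2}t_k^*$ combined with the two $\le$-inequalities on the left forces both $J_1\cup J_2$ and $J_1\cap J_2$ to be saturated, hence to lie in $\mathcal{J}$. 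Iterating yields $J_*:=\bigcap\mathcal{J}\in\mathcal{J}$ and $J^*:=\bigcup\mathcal{J}\in\mathcal{J}$.

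Now indices in $J_*$ give all-ones columns of the indicator matrix of $\mathcal{J}$ and indices outside $J^*$ give all-zero columns, so the spanning requirement forces $|J_*|=1$ and $J^*=\{1,\dots,m\}$. Writing $J_*=\{i_0\}$, the $J_*$-equation gives $t_{i_0}^*=s_{i_0}/n+1/2$, and subtracting it from every other active equation reduces the system to $\sum_{k\in J\setminus\{i_0\}} t_k^* = \frac{1}{n}\sum_{k\in J\setminus\{i_0\}} s_k$, a system of rank $m-1$ in the remaining variables with unique solution $t_j^*=s_j/n$ for $j\ne i_0$. Unrolling the $I$-reduction, any vertex is either the origin (the case $I=\{1,\dots,m\}$) or has $t_{i_0}^*=s_{i_0}/n+1/2$ for a unique $i_0\in\{1,\dots,m\}\setminus I$, $t_i^*=0$ for $i\in I$, and $t_j^*=s_j/n$ for the remaining indices, matching \eqref{equ:extremeDelta1}--\eqref{equ:extremeDelta2}. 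A direct verification using $s_i/n\ge 1/2$ confirms every such point belongs to $\Delta({\bf s})$. The main obstacle is the sublattice claim of the previous paragraph; it uses the uniform constant $1/2$ on the right-hand side of the defining inequalities in an essential way, and once it is in hand the vertex coordinates drop out of clean linear algebra.
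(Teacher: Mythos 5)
Your proof is correct, and it takes a genuinely different route from the paper. The paper argues by induction on $m$: after observing the easy inclusion $H\subset\Delta(m,\mathbf{s})$, it writes $\Delta(m,\mathbf{s})=\bigcup_{k=0}^{m}\Delta^{k}(m,\mathbf{s})$, shows that for $1\le k\le m$ the set $\Delta^{k}(m,\mathbf{s})$ is the convex hull of its two faces $F^{k}_{0}$ and $F^{k}_{1}$ (each handled by the inductive hypothesis in dimension $m-1$), and verifies directly that the remaining piece $\Delta^{0}(m,\mathbf{s})$ is a standard simplex contained in $H$. Your argument instead characterizes the vertex set of the polytope in one shot: you show that at any vertex the family $\mathcal{J}$ of saturated sum-constraints is closed under union and intersection (the ``modularity'' identity plus the fact that the constant $1/2$ is uniform over all $J$), deduce $|J_{*}|=1$ and $J^{*}=\{1,\dots,m\}$ from the spanning requirement, and then solve the resulting triangular linear system. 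What your approach buys is a self-contained, induction-free description of the vertices that also makes the count $m2^{m-1}+1$ transparent; what it costs is reliance on the polyhedral fact that a bounded polyhedron is the convex hull of its vertices and that a vertex is exactly a point where the active constraints have full rank, whereas the paper uses only elementary convex-hull manipulations. One small point worth making explicit in your write-up: when you iterate to obtain $J_{*}=\bigcap\mathcal{J}\in\mathcal{J}$ you use that any two members of $\mathcal{J}$ intersect (so the running intersection is always nonempty and remains in $\mathcal{J}$), which indeed follows from the ``no disjoint pair'' observation; and when you reinstate the coordinates in $I$, the restriction of the vertex to the complementary coordinates is again a vertex of the lower-dimensional $\Delta$ precisely because no active sum-constraint meets $I$, so the active constraints split. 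Both of these are fine but deserve a sentence each.
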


\begin{proof}Fix $\mathbf{s} = (s_1,\ldots,s_m)$ such that $s_i\ge \frac{n}2$ for all $1\le i\le m.$
Condition \eqref{equ:SIndexCondGe} gives a clearer presentation of the set $\Delta(\mathbf{s})$ as
$$
\Delta(m,\mathbf{s}) = \set{\Big(\frac1{p_1},\ldots,\frac1{p_m}\Big)\in\mathbb R^m :\ 0\le \frac1{p_i}\le \frac{s_i}n+\frac12, \ \ \sum_{i\in J}\frac1{p_i}\le \sum_{i\in J}\frac{s_i}{n}+\frac12},
$$
where $J$ runs over all non-empty subsets of $\set{1,\ldots,m}$. We let $H$ denote
 the convex hull of $(0,\dots , 0)$ and of all the points $(1/p_1,\dots , 1/p_m)$
 that satisfy \eqref{equ:extremeDelta1} and \eqref{equ:extremeDelta2}.
We will show that $\Delta(m,\mathbf{s})=H$ by induction in $m$.

The case when $m=2$ is trivial because $\Delta(2,\mathbf{s})$ is the convex hull of the following points $(0,0)$, $(\frac{s_1}n+\frac12,0)$, $(\frac{s_1}n+\frac12,\frac{s_2}n)$, $(0,\frac{s_2}n+\frac12)$ and $(\frac{s_1}n, \frac{s_2}n+\frac12)$; hence, the statement of Lemma \ref{lem:DeltaConvex} holds obviously in this case.

Now fix an $m>2$ and suppose that the statement of the lemma is true for $m-1.$ For $1\le k\le m$, denote
$$
\Delta^k(m,\mathbf{s}) = \set{\Big(\frac1{p_1},\ldots,\frac1{p_m}\Big)\in \Delta(m,\mathbf{s})  :\ 0\le \frac1{p_k}\le \frac{s_k}n},
$$
$$
F^k_0(m,\mathbf{s}) = \set{\Big(\frac1{p_1},\ldots,\frac1{p_m}\Big)\in \Delta(m,\mathbf{s}) :\ \frac1{p_k}=0},
$$
$$
F^k_1(m,\mathbf{s}) = \set{\Big(\frac1{p_1},\ldots,\frac1{p_m}\Big)\in \Delta(m,\mathbf{s}) :\ \frac1{p_k}=\frac{s_k}n},
$$
and
$$
\Delta^0(m,\mathbf{s}) = \set{\Big(\frac1{p_1},\ldots,\frac1{p_m}\Big)\in \Delta(m,\mathbf{s})  :\ \frac{s_i}n\le \frac1{p_i}\le \frac{s_i}n+\frac12,\ \forall\; 1\le i\le m}.
$$
It is easy to see that $\Delta(m,\mathbf{s}) = \cup_{k=0}^m\Delta^k(m,\mathbf{s}).$
We observe that $H$ is a subset of $\Delta(m,\mathbf{s})$, since each vertex of $H$ obviously sits inside the convex set $\Delta(m,\mathbf{s}).$ Thus, it suffices to prove that $\Delta^k(m,\mathbf{s})$ is a subset of $H$ for every $0\le k\le m.$

We first consider $\Delta^k(m,\mathbf{s})$ for $1\le k\le m.$
By induction, the face $F^k_0(m,\mathbf{s})$ is the convex hull of the following points $(0,\ldots,0)$ and $(\frac1{p_1},\ldots,\frac1{p_m})$, where $\frac1{p_k}=0,$ $\frac1{p_i}\in \set{0,\frac{s_i}n,\frac{s_i}n+\frac12}$ for $i\ne k,$ and there exists exactly one $i\ne k$ such that $\frac1{p_i} = \frac{s_i}n+\frac12.$
Similarly, the face $F^k_1(m,\mathbf{s})$ is determined by the same constraints for all variables $\frac1{p_i},i\ne k$ as those for $F^k_0(m,\mathbf{s}).$
Therefore, by induction, we have that $F^k_1(m,\mathbf{s})$ is the convex hull of the points $(0,\ldots,0,\frac{s_k}n,0,\ldots,0)$ and $(\frac1{p_1},\ldots,\frac1{p_m})$, where $\frac1{p_k}=\frac{s_k}n,$ $\frac1{p_i}\in \set{0,\frac{s_i}n,\frac{s_i}n+\frac12}$ for $i\ne k,$ and there exists exactly one $i\ne k$ such that $\frac1{p_i} = \frac{s_i}n+\frac12.$
Note that the point $(0,\ldots,0,\frac{s_k}n,0,\ldots,0)$ belongs to the line segment that joins the origin $(0,\dots , 0)$ with  $(0,\ldots,0,\frac{s_k}n+\frac12,0,\ldots,0).$ Thus $F^k_0(m,\mathbf{s})$ and $F^k_1(m,\mathbf{s})$ are contained in $H$, and hence, $\Delta^k(m,\mathbf{s})$ is a subset of $H$ since $\Delta^k(m,\mathbf{s})$ is a convex hull of two faces $F^k_0(m,\mathbf{s})$ and $F^k_1(m,\mathbf{s})$.

It remains to check that $\Delta^0(m,\mathbf{s})\subset H.$ In this case, we note that the constraints $0\le \frac1{p_i}-\frac{s_i}n\le \frac12,\ \forall\; 1\le i\le m$ and
$$
\sum_{i=1}^m\left( \frac1{p_i}-\frac{s_i}n\right)\le \frac12
$$
imply that $\Delta^0(m,\mathbf{s})$ is a standard $m$-simplex with vertices $(\frac{s_1}n,\ldots,\frac{s_m}n)$ and $(\frac1{p_1},\ldots,\frac1{p_m})$, where $\frac1{p_i}\in \set{\frac{s_i}n,\frac{s_i}n+\frac12}$ for $1\le i\le m,$ and there exists exactly one $i$ such that $\frac1{p_i} = \frac{s_i}n+\frac12,$ which implies $\Delta^0(m,\mathbf{s})\subset H$ with noting that the point $(\frac{s_1}n,\ldots,\frac{s_m}n)\in F^k_1(m,\mathbf{s})\subset H.$
\end{proof}

By virtue of
Lemma \ref{lem:DeltaConvex} and
Theorem \ref{thm:CalTorInterp},
to prove the estimate \eqref{equ:TSigmBoundEpsilon}
under the assumptions
\eqref{equ:halfnGe} and \eqref{equ:SIndexCondGe},
it is sufficient to show it
for ${\bf p}=(\infty, \dots, \infty)$ and
for
$\bf p$ satisfying \eqref{equ:extremeDelta1}
and \eqref{equ:extremeDelta2}.
For ${\bf p}=(\infty, \dots, \infty)$,
the estimate \eqref{equ:TSigmBoundEpsilon} with $BMO$ in place of $L^p$
is established in \cite[Corollary 6.3]{GraHan}.
Thus it is sufficient to
consider the latter points.
In the second step, we shall prove the following lemma,
which will complete the proof of Theorem \ref{thm:General}.

\begin{lem}\label{lem:Interpolation2}
Estimate \eqref{equ:TSigmBoundEpsilon} holds if
${\bf s}$ and ${\bf p}$
satisfy \eqref{equ:halfnGe}, \eqref{equ:extremeDelta1}, and
\eqref{equ:extremeDelta2}.
\end{lem}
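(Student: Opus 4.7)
The plan is to induct on the count $k := \#\{i\ne i_0 : p_i \in (1,2)\}$ of \emph{problematic} coordinates, where $i_0$ is the unique index singled out by \eqref{equ:extremeDelta2}. A coordinate $i$ is called problematic when \eqref{equ:extremeDelta1} chooses the middle option $1/p_i = s_i/n$ with $n/2 < s_i < n$, so that $p_i \in (1,2)$ and is not directly covered by any of Theorems~\ref{thm:MainSmallInd}, \ref{thm:HpiLInf}, \ref{thm:HpiLpiLp}. Note that $1/p_{i_0} = s_{i_0}/n + 1/2 \ge 1$, so $p_{i_0} \le 1$ always, and the distinguished index is never problematic.

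For the base case $k=0$, every coordinate $p_i$ lies in $(0,1]\cup\{2\}\cup\{\infty\}\subset (0,1]\cup[2,\infty]$, and $p_{i_0} \le 1$ provides the required small-index hypothesis. Passing from $\mathbf{s}$ to $\mathbf{s}+\epsilon\mathbf{1}$ both guarantees $s_i+\epsilon>n/2$ and upgrades \eqref{equ:SIndexCondGe} to the strict inequality \eqref{equ:SIndexCond} with slack $\epsilon|J|/n$ for every nonempty $J$. Thus Theorem~\ref{thm:HpiLpiLp} (or Theorem~\ref{thm:HpiLInf} when no coordinate equals $2$) directly delivers \eqref{equ:TSigmBoundEpsilon}.

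For the inductive step, fix $k \ge 1$, assume the lemma at problematic count $\le k-1$, and pick a problematic index $i_1$. Define two neighboring configurations that differ from $(\mathbf{s}, \mathbf{p})$ only in the $i_1$-th coordinate: in $(\mathbf{s}^{(0)}, \mathbf{p}^{(0)})$ replace $(s_{i_1}, p_{i_1})$ by $(n/2, 2)$, and in $(\mathbf{s}^{(1)}, \mathbf{p}^{(1)})$ by $(n, 1)$. A short calculation shows that each $(\mathbf{s}^{(j)}, \mathbf{p}^{(j)})$ again satisfies \eqref{equ:halfnGe}, \eqref{equ:extremeDelta1}, and \eqref{equ:extremeDelta2} with the same distinguished index $i_0$, and that its problematic count is precisely $k-1$. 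Setting $\theta := 2(n-s_{i_1})/n \in (0,1)$, one verifies the convex-combination identities
\[
\theta \mathbf{s}^{(0)} + (1-\theta)\mathbf{s}^{(1)} = \mathbf{s},
\qquad
\frac{\theta}{p^{(0)}_i} + \frac{1-\theta}{p^{(1)}_i} = \frac{1}{p_i}\ \text{for every } i.
\]
The induction hypothesis yields \eqref{equ:TSigmBoundEpsilon} at the two endpoints $(\mathbf{s}^{(j)} + \epsilon\mathbf{1}, \mathbf{p}^{(j)})$ for $j=0,1$, and the Calder\'on--Torchinsky interpolation Theorem~\ref{thm:CalTorInterp} applied to these two endpoints hands back \eqref{equ:TSigmBoundEpsilon} at $\mathbf{p}$ with smoothness $\theta(\mathbf{s}^{(0)}+\epsilon\mathbf{1}) + (1-\theta)(\mathbf{s}^{(1)}+\epsilon\mathbf{1}) = \mathbf{s} + \epsilon\mathbf{1}$, closing the induction.

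The main obstacle is bookkeeping rather than hard analysis: one must check that each perturbed endpoint is itself a valid extreme configuration of $\Delta(\mathbf{s}^{(j)})$ in the sense of \eqref{equ:extremeDelta1}--\eqref{equ:extremeDelta2} and that its problematic count strictly drops, which is what makes the induction well-founded. The geometric idea behind the argument is simple -- interpolate every bad coordinate $p_i \in (1,2)$ between its natural endpoints $p_i=1$ and $p_i=2$, with $s_i$ moving along the critical line $1/p_i = s_i/n$ from $n$ down to $n/2$ -- and the crucial point is that the extra $\epsilon$ of smoothness passes through each interpolation step unchanged, which is exactly what preserves sharpness in the final estimate.
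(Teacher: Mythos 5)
Your proof is correct and follows essentially the same route as the paper: induct on the number of indices with $p_i\in(1,2)$, and at each step replace the problematic pair $(s_{i_1},p_{i_1})$ by its two neighbors $(n/2,2)$ and $(n,1)$ on the critical line $1/p_{i_1}=s_{i_1}/n$, then apply Theorem~\ref{thm:CalTorInterp}. Your version additionally spells out the interpolation parameter $\theta=2(n-s_{i_1})/n$ and emphasizes that the extra $\epsilon$ of smoothness is preserved through the interpolation, but the decomposition, the two endpoint configurations, and the well-foundedness check are the same as in the paper's argument.
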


\begin{proof}
For ${\bf p}\in (0,\infty]^{m}$,
we define $\ell ({\bf p})$ to be the
number of the indices $i \in \{1, \dots, m\}$ such that
$1<p_i<2$.
We shall prove the claim
by induction on $\ell ({\bf p})$.

The conditions \eqref{equ:halfnGe} and \eqref{equ:extremeDelta2}
imply in particular that there exists at least one $i$ such that $p_i \le 1$.
Hence if $\ell ({\bf p})=0$
then the claim directly follows from
Theorem \ref{thm:HpiLpiLp}.

Assume $\ell_0 \ge 1$ and
assume the claim holds if $\ell ({\bf p}) < \ell_0$.
Let
$$
({\bf p}^{0}, {\bf s}^{0})=
(p^{0}_{1}, \dots, p^{0}_{m}, s^{0}_{1}, \dots, s^{0}_{m})
$$
be a point that satisfies
the conditions
\eqref{equ:halfnGe}, \eqref{equ:extremeDelta1}, and
\eqref{equ:extremeDelta2},
and satisfies $\ell ({\bf p}^{0}) = \ell_0$.
There exists an index $i$ such that $1<p^{0}_{i}<2$.
Notice that $1/p^{0}_{i}=s^{0}_{i} / n$ for this index $i$.
Without loss of generality, we assume $1>1/p^{0}_{1}=s^{0}_{1} / n> 1/2$.
Then the condition \eqref{equ:extremeDelta2} implies that
there exists exactly one $i$ such that $2\le i\le m$ and
$1/p^{0}_{i}=s^{0}_{i} / n + 1/2$.
Consider the following two points:
\begin{align*}
({\bf p}^{\prime}, {\bf s}^{\prime})
&=(1, p^{0}_{2}, \dots, p^{0}_{m}, n, s^{0}_{2}, \dots, s^{0}_{m}),
\\
({\bf p}^{\prime\prime}, {\bf s}^{\prime\prime})
&=(2, p^{0}_{2}, \dots, p^{0}_{m}, n/2, s^{0}_{2}, \dots, s^{0}_{m}).
\end{align*}
Both $({\bf p}^{\prime}, {\bf s}^{\prime})$
and
$({\bf p}^{\prime\prime}, {\bf s}^{\prime\prime})$
satisfy
the conditions
\eqref{equ:halfnGe}, \eqref{equ:extremeDelta1}, and
\eqref{equ:extremeDelta2},
and $\ell ({\bf p}^{\prime}) =\ell ({\bf p}^{\prime\prime}) = \ell_0 -1$.
Hence by the induction hypothesis
the estimate \eqref{equ:TSigmBoundEpsilon}
holds for
$({\bf p}^{\prime}, {\bf s}^{\prime})$
and
$({\bf p}^{\prime\prime}, {\bf s}^{\prime\prime})$.
Then,
by Theorem \ref{thm:CalTorInterp},
it follows that
the estimate \eqref{equ:TSigmBoundEpsilon}
also holds for
$({\bf p}^{0}, {\bf s}^{0})$.
This completes the proof of Lemma \ref{lem:Interpolation2}.
\end{proof}

\section{Proofs of the key lemmas}\label{section:ProofLem}

  \begin{proof}[Proof of Lemma \ref{lem:KeyLem1}]
   Without loss of generality, we assume that $J_0=\left\{1,\ldots,r\right\}$ for some
   $1\le r\le l,$ and $\norm{f_i}_{L^{\infty}}=1$ for all $l+1\le i\le m.$ Fix
   $$x\in \Big(\bigcap_{i=r+1}^lQ_i^*\Big)\setminus\bigcup_{i=1}^rQ_i^*$$
   (when $r=l,$ just fix $x\in \mathbb R^n\setminus\bigcup_{i=1}^lQ_i^*$).
   Now we can write
    $$
    T_{\sigma}(a_1,\ldots,a_l, f_{l+1},\ldots,f_m)(x) = \sum_{j\in\mathbb Z}g_j(x),
    $$
    where $g_j(x)$ is the function
    $$
  \int_{\mathbb R^{mn}}2^{jmn}
  K_j(2^j(x-y_1), \ldots, 2^j(x-y_m))
  a_1(y_1)\cdots a_l(y_l) f_{l+1}(y_{l+1})\cdots f_m(y_m)\, d\vec{y}
    $$
    with $K_j = \big(\sigma(2^j\cdot)\widehat\psi\, \big)^{\vee}.$
    Let $c_i$ be the center of the cube $Q_i$ $(1\le i\le l).$
    For $1\le i\le r$, since $x\notin Q_i^*$,
    we have ${\left\vert{x-c_i}\right\vert}\approx {\left\vert{x-y_i}\right\vert}$
    for all $y_i\in Q_i$.
    Fix $1\le k\le r.$
    Using Lemma \ref{lem:LInfL2}
    and applying the Cauchy-Schwarz inequality we obtain
    \begin{align}
      \prod_{i=1}^r&\left<2^j(x-c_i)\right>^{s_i}
  {\left\vert{g_j(x)}\right\vert}\notag\\
      \lesssim
&\ 2^{jmn}\!\! \! \int\limits_{Q_1\times\cdots\times{Q_{l}}
\times {\mathbb R}^{(m-l)n}}
      \prod_{i=1}^r\left<2^j(x-y_i)\right>^{s_i}
{\left\vert{K_j(2^j(x-y_1),\ldots,2^j(x-y_m))}\right\vert}
 \prod_{i=1}^l\norm{a_i}_{L^{\infty}}\,d\vec y
\notag
\\
      \le
      &
      \ 2^{jmn}\prod_{i=1}^l{\left\vert{Q_i}\right\vert}^{-\frac1{p_i}}
      \!\! \! \int\limits_{
Q_1\times\cdots\times{Q_{r}}
\times {\mathbb R}^{(m-r)n}
}
      \prod_{i=1}^r\left<2^j(x-y_i)\right>^{s_i}
{\left\vert{K_j(2^j(x-y_1),\ldots,2^j(x-y_m))}\right\vert}
\, d\vec y
\notag
\\
= &
\ 2^{jrn}\prod_{i=1}^l{\left\vert{Q_i}\right\vert}^{-\frac1{p_i}}
      \int\limits_{Q_1\times\cdots\times{Q_r}\times \mathbb R^{(m-r)n}}
      \prod_{i=1}^r\left<2^j(x-y_i)\right>^{s_i}
      \notag\\
      &\times{\left\vert
{K_j(2^j(x-y_1),\ldots,2^j(x-y_r),y_{r+1},\ldots,y_m)}
\right\vert}
\,dy_1\cdots dy_rdy_{r+1}\cdots dy_m
\notag
\\
      \le &\
2^{jrn}\prod_{i=1}^r{\left\vert{Q_i}\right\vert}^{1-\frac1{p_i}}
      \prod_{i=r+1}^l{\left\vert{Q_i}\right\vert}^{-\frac1{p_i}}
\int_{\mathbb R^{(m-r)n}}
      \int\limits_{Q_k} {\left\vert{Q_k}\right\vert}^{-1}
\left<2^j(x-y_k)\right>^{s_k}\times
\notag
\\
      &\times\norm{\prod_{\substack{i=1\\i\ne k}}^{r}
\left<y_i\right>^{s_i}
K_j(y_1,\ldots,y_{k-1},2^j(x-y_k),y_{k+1}, \ldots,y_m)}
_{L^{\infty}(dy_1 \cdots \widehat{dy_k} \cdots dy_{r})}
   \hspace{-.8in}   dy_k dy_{r+1}\cdots dy_m
\notag\\
      \lesssim &\ 2^{jrn}\prod_{i=1}^r{\left\vert{Q_i}\right\vert}
^{1-\frac1{p_i}}
      \prod_{i=r+1}^l{\left\vert{Q_i}\right\vert}^{-\frac1{p_i}}
\int_{\mathbb R^{(m-r)n}}
      \int\limits_{Q_k}
      {\left\vert{Q_k}\right\vert}^{-1}\left<2^j(x-y_k)\right>^{s_k}
\times
\notag\\
      &\times\norm{\prod_{\substack{i=1\\i\ne k}}^{r}\left<y_i\right>^{s_i}
      K_j(y_1,\ldots,y_{k-1},2^j(x-y_k),y_{k+1}, \ldots,y_m)}
      _{L^{2}(dy_1 \cdots \widehat{dy_k} \cdots dy_{r})}
      \hspace{-.8in} dy_kdy_{r+1}\cdots dy_m\notag\\
      \lesssim &\ 2^{jrn}\prod_{i=1}^r{\left\vert{Q_i}\right\vert}^{1-\frac1{p_i}}
      \prod_{i=r+1}^l{\left\vert{Q_i}\right\vert}^{-\frac1{p_i}}
      \int\limits_{Q_k}
      {\left\vert{Q_k}\right\vert}^{-1}\left<2^j(x-y_k)\right>^{s_k}\times\notag\\
      &\times
      \norm{
      \prod_{\substack{i=1\\i\ne k}}^{m}
      \left<y_i\right>^{s_i}
      K_j(y_1,\ldots,y_{k-1},2^j(x-y_k),y_{k+1}, \ldots,y_m)
      }_{L^{2}(dy_1 \cdots \widehat{dy_k} \cdots dy_m)}
      \, dy_k
      \notag
      \\
      \label{ali:H10Func}
=
      &
      \ 2^{jrn}
      \bigg(\prod_{i=1}^r{\left\vert{Q_i}\right\vert}^{1-\frac 1{p_i}}\bigg)
      \bigg(\prod_{i=r+1}^lb_i(x)\bigg)  h_j^{(k,0)}(x),
    \end{align}
    where
    \begin{align*}
    h_j^{(k,0)}(x) = &\, \dfrac1{{\left\vert{Q_k}\right\vert}}
    \int\limits_{Q_k}\left<2^j(x-y_k)\right>^{s_k}\\
      & \hspace{-.5in}\times
      \norm{\prod_{\substack{i=1\\i\ne k}}^{m}\left<y_i\right>^{s_i}
      K_j(y_1,\ldots,y_{k-1},2^j(x-y_k),y_{k+1}, \ldots,y_m)}_{L^{2}(
      dy_1 \cdots \widehat{dy_k} \cdots dy_m)}dy_k
    \end{align*}
    and $b_i(x) = {\left\vert{Q_i}\right\vert}^{-\frac1{p_i}}\chi_{Q_i^*}(x)$
    for $r+1\le i\le l.$
The functions $b_i$, $r+1\le i\le l$, obviously satisfy the estimate
$\norm{b_i}_{L^{p_i}}\lesssim 1$.
    Minkowski's inequality gives
    $$
    \norm{h_j^{(k,0)}}_{L^2}\le 2^{-\frac{jn}{2}}
    {\left\Vert\sigma(2^j\cdot)\widehat\psi\right\Vert}
    _{W^{(s_1,\ldots,s_m)}} \le  A 2^{-\frac{jn}{2}}.
    $$
    Using the vanishing moment condition of $a_k$
    and Taylor's formula, we write
    $$
    \begin{aligned}
    g_j(x) \, = &\, 2^{jmn}\sum_{{\left\vert{\alpha}\right\vert}=N_k}
    C_{\alpha}\int_{\mathbb R^{mn}}
    \Bigg\{\int_0^1(1-t)^{N_k-1}\\
    &\times \partial^{\alpha}_{k}
    K_j\Big(2^j(x-y_1),\ldots,2^j x_{c_k,y_k}^t,\ldots, 2^j(x-y_m)\Big)\\
    &\times(2^j(y_k-c_k))^{\alpha}a_1(y_1)\cdots
    a_l(y_l)f_{l+1}(y_{l+1})\cdots f_m(y_m)\ dt\Bigg\}\ dy_1\cdots dy_m,
    \end{aligned}
    $$
    where
    $x_{c_k,y_k}^t=x-c_k-t(y_k-c_k)$ and $\partial^{\alpha}_{k}
    K_j (z_1, \dots, z_m)= \partial_{z_k}^{\alpha} K_j (z_1, \dots, z_m)$.
    Notice that $\abs{x_{c_k,y_k}^t}\approx \abs{x-c_k}$ for
    $x\not\in Q_k^{\ast}$, $y_k\in Q_k$, and $0<t<1$.
Repeating the preceding argument, we obtain
    \begin{equation}\label{equ:H1NFunc}
    \prod_{i=1}^r\left<2^j(x-c_i)\right>^{s_i}
    {\left\vert{g_j(x)}\right\vert}
    \lesssim 2^{jrn}\bigg(\prod_{i=1}^r{\left\vert{Q_i}\right\vert}
^{1-\frac1{p_i}} \bigg)
    \bigg(\prod_{i=r+1}^l b_i(x)\bigg) h_j^{(k,1)}(x),
    \end{equation}
    where $b_i(x)$ are the same as above
    and
    $$
    \begin{aligned}
    h_j^{(k,1)}(x) &=
    (2^j\ell(Q_k))^{N_k}{\left\vert{Q_k}\right\vert}^{-1}
    \sum_{{\left\vert{\alpha}\right\vert}=N_k}
    \int_{Q_k}\Big\{\int_0^1\left<2^jx_{c_k,y_k}^t\right>^{s_k}\\
    &\times\norm{
    \prod_{\substack{i=1\\i\ne k}}^{l}\left<y_i\right>^{s_i}
    {\partial^{\alpha}_{k}K_j(y_1,\ldots,y_{k-1},
    2^jx_{c_k,y_k}^t,y_{k+1},\ldots,y_m)}}_{L^2
    (dy_1  \cdots \widehat{dy_k} \cdots dy_m)} dt\Big\}\ dy_k
    \end{aligned}
    $$
    ($\ell (Q_k)$ denotes the sidelength of the cube $Q_k$).
Minkowski's inequality and Lemma \ref{lem:LInfL2} imply that
    $$
    \norm{h_j^{(k,1)}}_{L^2}\lesssim
A 2^{-\frac{jn}{2}}(2^j\ell(Q_k))^{N_k}.
    $$
    Combining inequalities \eqref{ali:H10Func} and \eqref{equ:H1NFunc},
we obtain
    \begin{equation}\begin{split} \label{equ:GjQ1}
     & \bigg(\prod_{i=1}^r\left<2^j(x-c_i)\right>^{s_i}\bigg)
     {\left\vert{g_j(x)}\right\vert}
      \\
    & \hspace{1in} \lesssim 2^{jrn}
    \bigg(\prod_{i=1}^r{\left\vert{Q_i}\right\vert}^{1-\frac1{p_i}}\bigg)
    \bigg(\prod_{i=r+1}^lb_i(x)\bigg)
     \min\left\{h_j^{(k,0)}(x),h_j^{(k,1)}(x)\right\}
     \end{split}\end{equation}
  for all $1\le k\le r.$
  The inequalities in \eqref{equ:GjQ1} imply that
    \begin{equation}\begin{split}\label{equ:GjQ2}
   & {\left\vert{g_j(x)}\right\vert}  \\
  &   \le 2^{jrn}
  \bigg(\prod_{i=1}^r{\left\vert{Q_i}\right\vert}^{1-\frac1{p_i}}
  \left<2^j(x-c_i)\right>^{-s_i}\bigg)
  \bigg(\prod_{i=r+1}^l b_i(x)\bigg)
  \min_{1\le k\le r}\left\{h_j^{(k,0)}(x),h_j^{(k,1)}(x)\right\}
    \end{split}\end{equation}
    for all $x\in (\bigcap_{i=r+1}^lQ_i^*)\setminus (\bigcup_{i=1}^rQ_i^*)$.

    Now we need to construct functions $u^k_j$ $(1\le k \le r)$ such that
$$\abs{g_j(x)}
\lesssim A\prod_{k=1}^ru^k_j(x)\prod_{i=r+1}^lb_i(x)$$
for all  $x\in (\bigcap_{i=r+1}^lQ_i^*)\setminus (\bigcup_{i=1}^rQ_i^*)$ and that
    $\norm{\sum_{j}u^k_j}_{L^{p_k}}\lesssim 1$.
Then the lemma follows by taking $b_k=\sum_{j}u^k_j$ ($1\le k\le r$).

    For this, we choose $\lambda_{k}$, $1\le k\le r$, such that
    $$
    0\le\lambda_k<\frac12,
    \quad
    \frac{s_k}n>\frac1{p_k}-\frac12+\lambda_k,
    \quad
    \sum_{k=1}^r\lambda_k= \dfrac{r-1}2.
    $$
This is possible since \eqref{equ:SIndexCond} implies
    $$
    \sum_{k=1}^r\min\Big\{\frac12,\frac{s_k}{n}-\frac{1}{p_k}+\frac12\Big\}>\frac{r-1}2.
    $$
We set
    $
    \alpha_{k}= \frac1{p_k}-\frac12+\lambda_k$ and
    $\beta_{k}= 1-2\lambda_k $.
    Then we have $\alpha_k >0$, $\beta_k >0$, and $\sum_{k=1}^{r}\beta_k =1$.
 We set
    $$
    u^k_j = A^{-\beta_k}2^{jn}{\left\vert{Q_k}\right\vert}^{1-\frac1{p_k}}
    \left<2^j(\cdot-c_k)\right>^{-s_k}\chi_{(Q_k^*)^c}
 \min\left\{h_j^{(k,0)},h_j^{(k,1)}\right\}^{\beta_k},\quad 1\le k\le r .
    $$
    Then, from \eqref{equ:GjQ2}, it is easy to see that
    $$
    g_j(x)\lesssim A\prod_{k=1}^ru^k_j(x)\prod_{i=r+1}^lb_i(x)
    $$
     for all  $x\in (\cap_{i=r+1}^lQ_i^*)\setminus (\cup_{i=1}^rQ_i^*).$
     It remains to check that
    $\sum_{j}\int_{\mathbb R^n}{ |{u^k_j(x)} |}^{p_k}dx\lesssim 1.$
    Since $\frac1{p_k} = \alpha_k+\frac{\beta_k}2,$
    H\"older's inequality gives
    \begin{align*}
    \norm{u^k_j}_{L^{p_k}}
    \le A^{-\beta_k}2^{jn}
    {\left\vert{Q_k}\right\vert}^{ 1- \frac1{p_k}}
    \norm{\left<2^j(\cdot-c_k)\right>^{-s_k}\chi_{(Q_k^*)^c}}_{L^{\frac{1}{\alpha_k}}}
    \norm{ \min\left\{h_j^{(k,0)},h_j^{(k,1)}\right\} ^{\beta_k}}_{L^{\frac{2}{\beta_k}}}.
    \end{align*}
Since $\frac{s_k}{\alpha_k}>n,$ we have
    $$
    \norm{\left<2^j(\cdot-c_k)\right>^{-s_k}\chi_{(Q_k^*)^c}}
    _{L^{1/\alpha_k}}
    \approx 2^{-jn\alpha_k}
    \min\left\{1,(2^j\ell(Q_k))^{\alpha_kn-s_k}\right\}.
    $$
The estimates of $L^2$-norms of $h_j^{(k,0)}$ and $h_j^{(k,1)}$ given above
imply
    $$
    \begin{aligned}
    \norm{
    \left(
    \min\left\{h_j^{(k,0)},h_j^{(k,1)}\right\}
    \right)^{\beta_k}}_{L^{2/\beta_k}}
    \le
    &\min\left\{
    \norm{h_j^{(k,0)}}_{L^2}^{\beta_k},
    \norm{h_j^{(k,1)}}_{L^2}^{\beta_k}
    \right\}
    \\
    \lesssim
    & \Big(
    A2^{-jn/2}\min\left\{1,(2^j\ell(Q_k))^{N_k}\right\}
    \Big)^{\beta_k}.
    \end{aligned}
    $$
    Therefore
    $$
    \begin{aligned}
    \norm{u^k_j}_{L^{p_k}}\le
    &
    2^{jn}{\left\vert{Q_k}\right\vert}^{1-\frac1{p_k}}
    2^{-jn(\alpha_k + \beta_k/2)}
    \min\left\{   1,  (2^j\ell(Q_k))^{\alpha_kn-s_k}   \right\}
    \min\left\{   1,   (2^j\ell(Q_k))^{N_k\beta_k}   \right\}
    \\
    = &
\begin{cases}
{  (2^{j}\ell (Q_k) )^{n - n/p_k + N_k \beta_k}  }
&
{ \quad \text{if} \quad  2^{j}\ell (Q_k) \le 1  }
\\
{  (2^{j}\ell (Q_k) )^{n - n/p_k + \alpha_k n - s_k}  }
& {  \quad  \text{if}  \quad   2^{j}\ell (Q_k) > 1 }.
\end{cases}
\end{aligned}
$$
This inequality is enough to establish what we needed
    $
    \sum_{j\in\mathbb Z}\int_{\mathbb R^n}{\left\vert{u^k_j(x)}\right\vert}^{p_k}dx\lesssim 1.
    $
The proof of Lemma \ref{lem:KeyLem1} is complete.
  \end{proof}

\begin{proof}[Proof of Lemma \ref{lem:KeyLem2}]
We use the following notations:
\begin{align*}
&\one = \{1,\ldots, l\},
\quad
\two = \{l+1, \ldots, \rho\},
\quad
\three = \{\rho+1, \ldots, m\},
\\
&
A=\{1,\ldots, m\}=\one \cup \two \cup \three.
\end{align*}
Recall that we are assuming $\one\neq \emptyset$ and
$\two \neq \emptyset$
(the set $\three$ might be empty).
For a subset $B=\{i_1, \ldots , i_k\}\subset A$,
we write $y_{B}=(y_{i_1}, \dots,  y_{i_k})$
and $dy_{B}=dy_{i_1}\cdots dy_{i_k}$.
We take a smooth function $\varphi$ on ${\mathbb R}^{n}$ such that
$
\mathrm{supp}\, \varphi \subset
\{\xi \in {\mathbb R}^{n}\mid 4^{-1} a<\abs{\xi}<4\}
$
and $\varphi (\xi)=1$ on $2^{-1}a\le \abs{\xi}\le 2$,
where $a$ is the constant in the definition of $V(k_1, k_2)$,
and define $\Delta_j$, $j\in \mathbb Z$, by \eqref{equ:Deltaj}.
We set $s=\min \{s_1, \dots, s_m\}$ and
take a number $q$ such that
$$
\max \{1,\, n/s \}
<q<2;
$$
this is possible since $s_1, \dots, s_m>n/2$.

Let $a_i$ ($i\in \one$) and $f_i$ ($i\in \two \cup \three$)
be functions as mentioned in the lemma.
Without loss of generality,
we may assume $\norm{f_i}_{L^\infty}=1$ for $i \in \three$.
We use the decomposition
\eqref{equ:SigmaDecomposition}
and write
\begin{align*}
&g= T_{\sigma}(a_1, \dots, a_l, f_{l+1}, \dots, f_{m}),
\\
&g_{j}= T_{\sigma_{j}}(a_1, \dots, a_l, f_{l+1}, \dots, f_{m}).
\end{align*}
Thus $g=\sum_{j\in \mathbb Z}g_j$.

To prove the pointwise estimate \eqref{equ:pointwiseFirstCase},
we divide ${\mathbb R}^{n}$ as
${\mathbb R}^{n}=\bigcup_{J\subset \one}E_J$,
where $J$ runs all subsets of $\one$ and $E_J$ is defined by
$$
E_J = \bigcap_{i\in J}(Q_i^{\ast})^{c}
\cap \bigcap_{i\in \one \setminus J} Q_i^{\ast}.
$$
In order to prove \eqref{equ:pointwiseFirstCase}, it is
sufficient to construct functions
$b^{J}_{i}$ ($i\in \one$) and $\widetilde{f}^{J}_{i}$ ($i\in \two$),
for each $J\subset \one$, such that
\begin{equation}\label{equ:009}
\abs{g(x)} \chi_{E_J}(x)
\lesssim
b^{J}_{1}(x) \dots
b^{J}_{l}(x)
\widetilde{f}^{J}_{l+1}(x)
\dots
\widetilde{f}^{J}_{\rho}(x),
\end{equation}
where
the function $b^{J}_{i}$ depends only on
$m$, $n$, $(s_i)_{i\in A}$, $(p_i)_{i\in A}$,
$\sigma$, $J$, $i$, $a_i$, and $(f_i)_{i\in \three}$;
the function $\widetilde{f}^{J}_i$
depends only on
$m$, $n$, $(s_i)_{i\in A}$,
$J$, $i$, $f_i$, and $(f_i)_{i\in \three}$;
and they satisfy the estimates
\begin{align}
&
\big\|{b^{J}_{i}}\big\|_{L^{p_i}}\lesssim 1,
\label{equ:100}
\\
&
\big\|{\widetilde{f}^{J}_i}\big\|_{L^{2}}\lesssim \norm{f_i}_{L^2}.
\label{equ:101}
\end{align}
In fact, if this is proved, then
the desired functions can be obtained by
$b_{i}=\sum_{J\subset \one} b^{J}_{i}$
and
$\widetilde{f}_{i}=\sum_{J\subset \one} \widetilde{f}^{J}_i$.

First, we shall prove the estimate \eqref{equ:009} for $J=\emptyset$,
$E_{\emptyset}= Q_{1}^{\ast}\cap \dots \cap Q_{l}^{\ast}$.
The argument to be given below will show the estimate \eqref{equ:009}
with some combination of the following choices of
$b^{\emptyset}_{i}$ and
$\widetilde{f}^{\emptyset}_{i}$:
\begin{align}
&
b^{\emptyset}_{i}(x)=
M_{q}(a_i)(x)
\chi_{Q_{i}^{\ast}}(x),
\label{equ:011}
\\
&
b^{\emptyset}_{i}(x)=
\bigg(
\sum_{j\in \mathbb Z}
M_{q}(\Delta_{j} a_i)(x)^2
\bigg)^{1/2}
\chi_{Q_{i}^{\ast}}(x),
\label{equ:010}
\\
&
b^{\emptyset}_{i}(x)=
\bigg(
\sum_{j\in \mathbb Z}
(\zeta_{j}\ast \abs{a_i}^{q})(x)^{2/q}
(\zeta_{j}\ast \abs{\Delta_{j}f_{k}}^{q})(x)^{2/q}
\bigg)^{1/2}
\chi_{Q_{i}^{\ast}}(x),
\quad k\in \three,
\label{equ:012}
\\
&
\widetilde{f}^{\emptyset}_{i}(x)=
M_{q}(f_i)(x),
\label{equ:014}
\\
&
\widetilde{f}^{\emptyset}_{i}(x)=
\bigg(
\sum_{j\in \mathbb Z}
M_{q}(\Delta_{j} f_i)(x)^2
\bigg)^{1/2},
\label{equ:013}
\\
&
\widetilde{f}^{\emptyset}_{i}(x)=
\bigg(
\sum_{j\in \mathbb Z}
(\zeta_{j}\ast \abs{f_i}^{q})(x)^{2/q}
(\zeta_{j}\ast \abs{\Delta_{j}f_{k}}^{q})(x)^{2/q}
\bigg)^{1/2},
\quad k\in \three,
\label{equ:015}
\end{align}
where $\zeta_{j}(x)=2^{jn}(1+\abs{2^j x})^{-sq}$ is the function
in Lemma \ref{lem:Tomita} and
$M_{q}$ denotes the maximal operator defined by
$$
M_{q}(f)(x)
=\sup_{r>0}
\bigg(\frac{1}{r^{n}}
\int_{\abs{x-y}<r}
\abs{f(y)}^{q}\,
dy
\bigg)^{1/q}.
$$
The above functions $b^{\emptyset}_{i}$ and
$\widetilde{f}^{\emptyset}_{i}$
depend on other things as mentioned in the lemma.
We shall see that they also satisfy the estimates
\eqref{equ:100} and \eqref{equ:101}.
For $\widetilde{f}^{\emptyset}_{i}$ given by \eqref{equ:014}
or \eqref{equ:013}, the $L^2$-boundedness of $M_{q}$, $q<2$,
and Lemma \ref{lem:CarlesonMeasure} \eqref{equ:aaa}
give the $L^2$-estimate \eqref{equ:101}.
For $\widetilde{f}^{\emptyset}_{i}$ given by \eqref{equ:015},
Lemma \ref{lem:CarlesonMeasure} \eqref{equ:ccc} yields the same
$L^2$-estimate since
$\norm{f_k}_{BMO}
\lesssim \norm{f_{k}}_{L^\infty}=1$ for $k\in \three$.
For
$b^{\emptyset}_{i}$ given by \eqref{equ:011},
the $L^2$-estimate
$\norm{M_q (a_i)}_{L^2}\lesssim \norm{a_i}_{L^2}$
and H\"older's inequality give the estimate \eqref{equ:100}:
\begin{equation*}
\big\|{b^{\emptyset}_{i}}\big\|_{L^{p_i}}
\le
\norm{
M_{q}(a_i)
}_{L^2}
\abs{Q_{i}^{\ast}}^{1/p_i -1/2}
\lesssim
\norm{a_i}_{L^2}
\abs{Q_{i}}^{1/p_i -1/2}
\le 1.
\end{equation*}
For $b^{\emptyset}_{i}$ given by \eqref{equ:010}
or \eqref{equ:012}, the same estimate is proved in a similar way.

Now we shall divide the proof of \eqref{equ:009} for $J=\emptyset$
into the following
six cases, (1)--(6), depending on the indices
$k_1$ and $k_2$ involved in the assumption
\eqref{equ:support}.

(1) $k_1,k_2\in \one$.
In this case, without loss of generality,
we assume $\{k_1,k_2\}=\{1,2\}\subset \one$.
Then, by the assumption \eqref{equ:support},
it follows that
$2^{j-1}a\le \abs{\xi_1}\le 2^{j+1}$
and $2^{j-1}a\le \abs{\xi_2}\le 2^{j+1}$
for all $\xi \in \mathrm{supp}\, \sigma_{j}$,
and hence $\varphi (2^{-j}\xi_{1})=\varphi (2^{-j}\xi_{2})=1$
on $\mathrm{supp}\, \sigma_{j}$.
Hence we can write
$$
g_{j}=T_{\sigma_{j}}
(\Delta_{j}a_1, \Delta_{j}a_2, a_3, \dots, a_l,
f_{l+1}, \dots, f_{\rho}, \dots, f_{m}).
$$
Hence, by Lemma \ref{lem:Tomita}, we have the
pointwise estimate
\begin{align*}
\abs{g_j}
\lesssim
&(\zeta_{j}\ast \abs{\Delta_{j}a_1}^q)^{1/q}
(\zeta_{j}\ast \abs{\Delta_{j}a_2}^q)^{1/q}
(\zeta_{j}\ast \abs{a_3}^q)^{1/q}
\cdots
(\zeta_{j}\ast \abs{a_l}^q)^{1/q}
\\
&
\times (\zeta_{j}\ast \abs{f_{l+1}}^q)^{1/q}
\cdots
(\zeta_{j}\ast \abs{f_{\rho}}^q)^{1/q}
\cdots
(\zeta_{j}\ast \abs{f_{m}}^q)^{1/q}
\\
\lesssim
&
M_{q}(\Delta_{j}a_1)
M_{q}(\Delta_{j}a_2)
M_{q}(a_3)
\cdots
M_{q}(a_l)
M_{q}(f_{l+1})
\cdots
M_{q}(f_\rho).
\end{align*}
(Notice that
the inequality
$(\zeta_{j}\ast \abs{f}^q)^{1/q}
\lesssim
M_q (f)$ holds because $sq>n$.)
Summing over $j\in \mathbb Z$ and using the
Cauchy-Schwarz inequality, we obtain
\begin{align*}
\abs{g}
\lesssim
&
\bigg(
\sum_{j\in \mathbb Z}
\{M_{q}(\Delta_{j}a_1)\}^{2}
\bigg)^{1/2}
\bigg(
\sum_{j\in \mathbb Z}
\{M_{q}(\Delta_{j}a_2)\}^{2}
\bigg)^{1/2}
\\
&\times
M_{q}(a_3)
\cdots
M_{q}(a_l)
M_{q}(f_{l+1})
\cdots
M_{q}(f_\rho).
\end{align*}
This implies \eqref{equ:009} for $J=\emptyset$
with
$b^{\emptyset}_{i}$ of \eqref{equ:010} for
$i=1,2$,
with
$b^{\emptyset}_{i}$ of \eqref{equ:011} for
$3\le i\le l$,
and
with
$\widetilde{f}^{\emptyset}_{i}$
of \eqref{equ:014} for
$l+1\le i\le \rho$.

(2) $k_1,k_2\in \two$.
In this case, without loss of generality,
we assume $\{k_1,k_2\}=\{l+1,l+2\}\subset \two$.
Then we can write
$$
g_{j}=T_{\sigma_{j}}
(a_1, \dots, a_l, \Delta_{j}f_{l+1}, \Delta_{j}f_{l+2},
f_{l+3}, \dots, f_{\rho}, \dots, f_{m}).
$$
Hence, by Lemma \ref{lem:Tomita},
\begin{align*}
\abs{g_j}
&\lesssim
M_{q}(a_1)
\cdots
M_{q}(a_l)
M_{q}(\Delta_{j}f_{l+1})
M_{q}(\Delta_{j}f_{l+2})
M_{q}(f_{l+3})
\cdots
M_{q}(f_{\rho}).
\end{align*}
Taking sum over $j\in \mathbb Z$ and using the
Cauchy-Schwarz inequality, we obtain
\begin{align*}
\abs{g}
&
\lesssim
M_{q}(a_1)
\cdots
M_{q}(a_l)
\bigg(
\sum_{j\in \mathbb Z}
\{M_{q}(\Delta_{j}f_{l+1})\}^{2}
\bigg)^{1/2}
\bigg(
\sum_{j\in \mathbb Z}
\{M_{q}(\Delta_{j}f_{l+2})\}^{2}
\bigg)^{1/2}
\\
&\qquad\qquad\times
M_{q}(f_{l+3})
\cdots
M_{q}(f_{\rho}).
\end{align*}
This implies \eqref{equ:009} for $J=\emptyset$
with
$b^{\emptyset}_{i}$ of \eqref{equ:011} for
$1\le i\le l$,
with
$\widetilde{f}^{\emptyset}_{i}$
of \eqref{equ:013} for
$i=l+1, l+2$,
and with
$\widetilde{f}^{\emptyset}_{i}$
of \eqref{equ:014} for
$l+3\le i\le \rho$.

(3) $k_1,k_2\in \three$.
Without loss of generality,
we assume $\{k_1,k_2\}=\{\rho+1,\rho+2\}\subset \three$.
Then $g_j$ can be written as
$$
g_{j}=T_{\sigma_{j}}
(a_1, \dots, a_l, f_{l+1}, \dots , f_{\rho},
\Delta_{j}f_{\rho+1},
\Delta_{j}f_{\rho+2},
f_{\rho+3}, \dots, f_{m})
$$
and Lemma \ref{lem:Tomita} yields
\begin{align*}
\abs{g_j}
\lesssim
&(\zeta_{j}\ast \abs{a_1}^{q})^{1/q}
M_{q}(a_2)
\cdots
M_{q}(a_l)
\\
&\times
(\zeta_{j}\ast \abs{f_{l+1}}^{q})^{1/q}
M_{q}(f_{l+2})
\cdots
M_{q}(f_{\rho})
(\zeta_{j}\ast \abs{\Delta_{j}f_{\rho+1}}^q)^{1/q}
(\zeta_{j}\ast \abs{\Delta_{j}f_{\rho+2}}^q)^{1/q}.
\end{align*}
Taking sum over $j\in \mathbb Z$ and using the
Cauchy-Schwarz inequality, we obtain
\begin{align*}
\abs{g}
\lesssim
&
\bigg(
\sum_{j\in \mathbb Z}
(\zeta_{j}\ast \abs{a_1}^{q})^{2/q}
(\zeta_{j}\ast \abs{\Delta_{j}f_{\rho+1}}^{q}
)^{2/q}
\bigg)^{1/2}
M_{q}(a_2)
\cdots
M_{q}(a_l)
\\
&\times
\bigg(
\sum_{j\in \mathbb Z}
(\zeta_{j}\ast \abs{f_{l+1}}^{q})^{2/q}
(\zeta_{j}\ast \abs{\Delta_{j}f_{\rho+2}}^{q}
)^{2/q}
\bigg)^{1/2}
M_{q}(f_{l+2})
\cdots
M_{q}(f_{\rho}).
\end{align*}
This implies \eqref{equ:009} for $J=\emptyset$
with the following functions:
$b^{\emptyset}_{1}$ is \eqref{equ:012} with $i=1$ and $k=\rho+1$;
$b^{\emptyset}_{i}$ is \eqref{equ:011} for $2\le i\le l$;
$\widetilde{f}^{\emptyset}_{l+1}$ is
\eqref{equ:015} with $i=l+1$ and $k=\rho+2$;
and
$\widetilde{f}^{\emptyset}_{i}$ is
\eqref{equ:014} for
$l+2\le i\le \rho$.

(4) $k_1\in \one$ and $k_2\in \two$.
Without loss of generality,
we assume $k_1=1$ and $k_2=l+1$.
Then
$$
g_{j}=T_{\sigma_{j}}
(\Delta_{j}a_1, a_2, \dots, a_l, \Delta_{j}f_{l+1},
f_{l+2}, \dots, f_{\rho}, \dots, f_{m})
$$
and
Lemma \ref{lem:Tomita} yields
\begin{align*}
\abs{g_j}
&\lesssim
M_{q}(\Delta_{j}a_1)
M_{q}(a_2)
\cdots
M_{q}(a_l)
M_{q}(\Delta_{j}f_{l+1})
M_{q}(f_{l+2})
\cdots
M_{q}(f_{\rho}).
\end{align*}
Taking sum over $j\in \mathbb Z$ and using the
Cauchy-Schwarz inequality, we obtain
\begin{align*}
\abs{g}
\lesssim
&
\bigg(\sum_{j\in \mathbb Z}
\{M_{q}(\Delta_{j}a_{1})\}^{2}
\bigg)^{1/2}
M_{q}(a_2)
\cdots
M_{q}(a_l)
\\
&\times
\bigg(
\sum_{j\in \mathbb Z}
\{M_{q}(\Delta_{j}f_{l+1})\}^{2}
\bigg)^{1/2}
M_{q}(f_{l+2})
\cdots
M_{q}(f_{\rho}).
\end{align*}
This implies \eqref{equ:009} for $J=\emptyset$
with
$b^{\emptyset}_{i}$ of \eqref{equ:010} for
$i=1$,
$b^{\emptyset}_{i}$ of \eqref{equ:011} for
$2\le i\le l$,
with
$\widetilde{f}^{\emptyset}_{i}$
of \eqref{equ:013} for
$i=l+1$,
and with
$\widetilde{f}^{\emptyset}_{i}$
of \eqref{equ:014} for
$l+2\le i\le \rho$.

(5) $k_1\in \two$ and $k_2\in \three$.
Without loss of generality,
we assume $k_1=l+1$ and $k_2=\rho+1$.
Then we have
$$
g_{j}=T_{\sigma_{j}}
(a_1, \dots, a_l, \Delta_{j}f_{l+1},
f_{l+2}, \dots, f_{\rho},
\Delta_{j}f_{\rho+1},
f_{\rho+2},\dots, f_m)
$$
and Lemma \ref{lem:Tomita} yields
\begin{align*}
\abs{g_j}
&\lesssim
(\zeta_{j}\ast \abs{a_1}^{q})^{1/q}
M_{q}(a_2)
\cdots
M_{q}(a_l)
\\
&\times
M_{q}(\Delta_{j}f_{l+1})
M_{q}(f_{l+2})
\cdots
M_{q}(f_{\rho})
(\zeta_{j}\ast \abs{\Delta_{j}f_{\rho+1}}^{q})^{1/q}.
\end{align*}
Taking sum over $j\in \mathbb Z$ and using the
Cauchy-Schwarz inequality, we obtain
\begin{align*}
\abs{g}
\lesssim
&
\bigg(\sum_{j\in \mathbb Z}
(\zeta_{j}\ast \abs{a_{1}}^{q})^{2/q}
(\zeta_{j}\ast \abs{\Delta_{j}f_{\rho+1}}^{q})^{2/q}
\bigg)^{1/2}
M_{q}(a_2)
\cdots
M_{q}(a_l)
\\
&\times
\bigg(
\sum_{j\in \mathbb Z}
\{M_{q}(\Delta_{j}f_{l+1})\}^{2}
\bigg)^{1/2}
M_{q}(f_{l+2})
\cdots
M_{q}(f_{\rho}).
\end{align*}
This implies \eqref{equ:009} for $J=\emptyset$
with the following functions:
$b^{\emptyset}_{1}$ is \eqref{equ:012} with $i=1$ and $k=\rho+1$;
$b^{\emptyset}_{i}$ is \eqref{equ:011} for
$2\le i\le l$;
$\widetilde{f}^{\emptyset}_{i}$
is \eqref{equ:013} for
$i=l+1$;
and
$\widetilde{f}^{\emptyset}_{i}$
is \eqref{equ:014} for
$l+2\le i\le \rho$.

(6) $k_1\in \one$ and $k_2\in \three$.
Without loss of generality,
we assume $k_1=1$ and $k_2=\rho+1$.
Then $g_j$ can be written as
$$
g_{j}=T_{\sigma_{j}}
(\Delta_{j}a_1, a_2, \dots, a_l, f_{l+1},
\dots, f_{\rho},
\Delta_{j}f_{\rho+1},
f_{\rho+2},\dots, f_m)
$$
and Lemma \ref{lem:Tomita} yields
\begin{align*}
\abs{g_j}
&\lesssim
M_{q}(\Delta_{j}a_1)
M_{q}(a_2)
\cdots
M_{q}(a_l)
\\
&\times
(\zeta_{j}\ast \abs{f_{l+1}}^{q})^{1/q}
M_{q}(f_{l+2})
\cdots
M_{q}(f_{\rho})
(\zeta_{j}\ast \abs{\Delta_{j}f_{\rho+1}}^{q})^{1/q}.
\end{align*}
Using the Cauchy-Schwarz inequality, we obtain
\begin{align*}
\abs{g}
\lesssim
&
\bigg(\sum_{j\in \mathbb Z}
\{M_{q}(\Delta_{j} a_{1})\}^{2}
\bigg)^{1/2}
M_{q}(a_2)
\cdots
M_{q}(a_l)
\\
&\times
\bigg(
\sum_{j\in \mathbb Z}
(\zeta_{j}\ast \abs{f_{l+1}}^{q})^{2/q}
(\zeta_{j}\ast \abs{\Delta_{j}f_{\rho+1}}^{q})^{2/q}
\bigg)^{1/2}
M_{q}(f_{l+2})
\cdots
M_{q}(f_{\rho}).
\end{align*}
This implies \eqref{equ:009} for $J=\emptyset$
with the following functions:
$b^{\emptyset}_{i}$ is \eqref{equ:010} for
$i=1$;
$b^{\emptyset}_{i}$ is \eqref{equ:011} for
$2\le i\le l$;
$\widetilde{f}^{\emptyset}_{l+1}$ is
\eqref{equ:015} with
$i=l+1$ and $k=\rho+1$;
and $\widetilde{f}^{\emptyset}_{i}$
is \eqref{equ:014} for
$l+2\le i\le \rho$.
Thus we have proved
\eqref{equ:009} for $J=\emptyset$.

Next we shall prove
\eqref{equ:009} for $J\neq \emptyset$.
Here we will not use the assumption
\eqref{equ:support}.
We fix a nonempty subset $J\subset \one$.
We shall prove that there exist
functions
$u^{J}_{k,j}$, $k\in J$, $j\in \mathbb Z$,
such that
\begin{equation}\label{equ:019}
\abs{g_j(x)}
\chi_{E_{J}}(x)
\lesssim
\prod_{k\in J}
u^{J}_{k,j}(x)\cdot
\prod_{i\in \one \setminus J}
\abs{Q_{i}}^{-1/p_i}
\chi_{Q_{i}^{\ast}}(x)
\cdot
\prod_{i\in \two }
M_{q}(f_{i})(x)
\end{equation}
for all $j\in \mathbb Z$ and all $x\in {\mathbb R}^{n}$;
the function $u^{J}_{k,j}$ depends only on
$m$, $n$, $(s_{i})_{i\in A}$, $(p_{i})_{i\in A}$, $\sigma$,
$J$, $k$, $j$, $N_k$, and $Q_{k}$,
and satisfies the estimate
\begin{equation}\label{equ:020}
\norm{u^{J}_{k,j}}_{L^{p_k}}
\lesssim
\min \{
(2^{j}\ell (Q_{k}))^{\gamma_k}, \,
(2^{j}\ell (Q_{k}))^{-\delta_k}
\},
\end{equation}
where $\gamma_{k}$ and $\delta_{k}$ are
positive constants that
will be given in terms of
$n$, $k$, $J$, $(s_{i})_{i\in J}$, $(p_{i})_{i\in J}$,
and $N_{k}$.
If we have these functions $u^{J}_{k,j}$,
then we have \eqref{equ:009} with the functions
\begin{align*}
&b^{J}_{k}=\sum_{j\in \mathbb Z}u^{J}_{k,j}
\quad\text{for}\quad k\in J,
\\
&b^{J}_{i}=
\abs{Q_{i}}^{-1/p_i}\chi_{Q_{i}^{\ast}}
\quad\text{for}\quad i\in \one \setminus J,
\\
&\widetilde{f}^{J}_{i}=M_{q}(f_i)
\quad\text{for}\quad i\in \two.
\end{align*}
In fact, $b^{J}_{k}$, $k\in J$,
depends only on
$m$, $n$, $(s_{i})_{i\in J}$, $(p_{i})_{i\in J}$,
$\sigma$, $J$, $k$, $N_{k}$, and $Q_{k}$,
and the estimate
\eqref{equ:100} follows from \eqref{equ:020}.
The estimate \eqref{equ:100} for $b^{J}_{i}$ with
$i\in \one \setminus J$ is obvious and
the estimate \eqref{equ:101} for $\widetilde{f}_{i}$
with $i\in \two$ holds by the $L^2$-boundedness of $M_{q}$, $q<2$.
Thus it is sufficient to construct the functions $u^{J}_{k,j}$.

Before we proceed to the construction of $u^{J}_{k,j}$,
we observe that it is sufficient to treat only the
case $j=0$.
In fact, if we have \eqref{equ:019}-\eqref{equ:020} for $j=0$,
then the case of general $j\in \mathbb Z$ can be derived
by the use of the dilation formula
$$
T_{\sigma_{j}}
(f_1,\dots, f_{m})(x)
=
T_{\sigma_{j}(2^{j}\cdot )}
(f_1(2^{-j}\cdot),\dots, f_{m}(2^{-j}\cdot))(2^{j}x)
$$
and by simple computation.

Thus we shall consider $g_{0}(x)$.
Using $K_{0}=(\sigma_{0})^{\vee}$
(the inverse Fourier transform of $\sigma_{0}$),
we write
\begin{equation}\label{equ:021}
g_{0}(x)
=\int_{{\mathbb R}^{mn}}
K_{0}(x-y_1,\dots, x-y_m)
\prod_{i\in \one}
a_{i}(y_i)
\cdot
\prod_{i\in\two \cup \three }
f_{i}(y_i)\,
dy_1\cdots dy_m.
\end{equation}
We write $c_i$ to denote the center of the cube $Q_i$.
Since $\abs{x-y_i}\approx \abs{x-c_i}$ for $x\not\in Q_{i}^{\ast}$
and $y_{i}\in Q_{i}$, from
\eqref{equ:021}
we see that
the following inequalities hold for $x\in E_{J}$:
\begin{align*}
&
\prod_{i\in J}\langle x-c_i \rangle^{s_i}
\cdot \abs{g_{0}(x)}
\\
&
\lesssim
\int_{{\mathbb R}^{mn}}
\prod_{i\in J}\langle x-y_i \rangle^{s_i}
\cdot
\abs{
K_{0}(x-y_1,\dots, x-y_m)
}
\prod_{i\in \one}
\abs{a_{i}(y_i)}
\cdot
\prod_{i\in\two \cup \three }
\abs{f_{i}(y_i)}
\,dy_1\cdots dy_m
\\
&
\le
\int_{{\mathbb R}^{mn}}
\prod_{i\in J}\langle x-y_i \rangle^{s_i}
\cdot
\abs{
K_{0}(x-y_1,\dots, x-y_m)
}
\\
&\quad \quad \quad \quad
\times
\prod_{i\in \one}
\abs{Q_{i}}^{-1/p_i}\chi_{Q_{i}}(y_i)
\cdot
\prod_{i\in\two}
\abs{f_{i}(y_i)}
\,dy_1\cdots dy_m.
\end{align*}
We take a $k\in J$ and estimate the last integral as
\begin{align*}
\le
&\int_{{\mathbb R}^{n}}
\norm{
\prod_{i\in J\cup \two}
\langle x-y_i \rangle^{s_i}
\cdot
K_{0}(x-y_1,\dots, x-y_m)
}
_{
L^{\infty}(y_{J\setminus \{k\}})
L^{1}(y_{\one \setminus J})
L^{q^{\prime}}(y_{\two})
L^{1}(y_{\three})
}
\\
&\times
\norm{
\prod_{i\in J}
\abs{Q_{i}}^{-1/p_i}\chi_{Q_{i}}(y_i)
}_{L^{1}(y_{J\setminus \{k\}})}
\norm{
\prod_{i\in \one \setminus J}
\abs{Q_{i}}^{-1/p_i}\chi_{Q_{i}}(y_i)
}_{L^{\infty}(y_{\one \setminus J})}
\\
&
\times
\norm{
\prod_{i\in \two}
\langle
x-y_i
\rangle^{-s_i}
f_{i}(y_i)
}_{L^{q}(y_{\two})}
\, dy_{k},
\end{align*}
where we used the following notation
for mixed norm and its obvious generalization:
$$
\norm{
F(z_1,z_2)
}_{L^{p}(z_1)L^{q}(z_2)}
=
\left[
\int_{{\mathbb R}^n}
\bigg(\int_{{\mathbb R}^n}
\abs{F(z_1,z_2)}^{p}\, dz_1
\bigg)^{q/p}
\,dz_2
\right]^{1/q}.
$$
Recall that the mixed norms satisfy
\begin{equation}\label{equ:022}
\norm{
F(z_1,z_2)
}_{L^{p}(z_1)L^{q}(z_2)}
\le
\norm{
F(z_1,z_2)
}_{L^{q}(z_2)L^{p}(z_1)}
\quad \text{if}\quad p<q.
\end{equation}
Since $s_{i}>n/2$, the Cauchy-Schwarz inequality
gives
\begin{equation}\label{equ:023}
\norm{
F(x-y_1,\dots, x-y_m)
}_{L^{1}(y_{B})}
\lesssim
\norm{
\prod_{i\in B}\langle x-y_i \rangle ^{s_i}
\cdot
F(x-y_1,\dots, x-y_m)
}_{L^{2}(y_{B})}.
\end{equation}
Now repeated applications of
\eqref{equ:022}, \eqref{equ:023},
and Lemma \ref{lem:LInfL2} yield
\begin{align*}
&
\norm{
\prod_{i\in J\cup \two}
\langle x-y_i \rangle^{s_i}
\cdot
K_{0}(x-y_1,\dots, x-y_m)
}
_{
L^{\infty}(y_{J\setminus \{k\}})
L^{1}(y_{\one \setminus J})
L^{q^{\prime}}(y_{\two})
L^{1}(y_{\three})
}
\\
&
\lesssim
\norm{
\prod_{i\in A}
\langle x-y_i \rangle^{s_i}
\cdot
K_{0}(x-y_1,\dots, x-y_m)
}
_{
L^{\infty}(y_{J\setminus \{k\}})
L^{2}(y_{\one \setminus J})
L^{q^{\prime}}(y_{\two})
L^
{2}(y_{\three})
}
\\
&\lesssim
\norm{
\prod_{i\in A}
\langle x-y_i \rangle^{s_i}
\cdot
K_{0}(x-y_1,\dots, x-y_m)
}
_{
L^{2}(y_{A\setminus \{k\}})
}
\\
&
=
\bigg\Vert
\langle x-y_k \rangle^{s_k}
\prod_{i\in A\setminus \{k\}}
\langle z_i \rangle^{s_i}
\cdot
K_{0}(z_1,\dots, x-y_k,\dots, z_m)
\bigg\Vert
_{
L^{2}(z_{A\setminus \{k\}})
}.
\end{align*}
Since $s_i q>n$ by our choice of $q$, we have
$$
\norm{
\prod_{i\in \two}
\langle
x-y_i
\rangle^{-s_i}
f_{i}(y_i)
}_{L^{q}(y_{\two})}
\lesssim
\prod_{i\in \two}
M_{q}
(f_{i})(x).
$$
Combining the above inequalities, we obtain
the following estimate for $x\in E_{J}$:
\begin{equation}\label{equ:024}
\begin{split}
&\prod_{i\in J}\langle x-c_i \rangle^{s_i}
\cdot \abs{g_{0}(x)}
\\
&\lesssim
h^{(k,0)}(x)
\prod_{i\in J}\abs{Q_i}^{-1/p_i +1}
\cdot
\prod_{i\in \one \setminus J}\abs{Q_i}^{-1/p_i}
\cdot
\prod_{i\in \two }M_{q}(f_i)(x),
\end{split}
\end{equation}
where
\begin{align*}
&h^{(k,0)}(x)
\\
&
=\abs{Q_k}^{-1}\int_{Q_k}
\bigg\Vert
\langle x-y_k \rangle^{s_k}
\prod_{i\in A\setminus \{k\}}
\langle
z_i
\rangle^{s_i}
\cdot
K_{0}(
z_1, \dots , x-y_k, \dots, z_m)
\bigg\Vert
_{L^2(z_{A\setminus \{k\}})}
\, dy_k.
\end{align*}
We have
\begin{align*}
&
\norm{h^{(k,0)}}_{L^2({\mathbb R}^n)}
\\
&
\le
\abs{Q_k}^{-1}\int_{Q_k}
\bigg\Vert
\langle x-y_k \rangle^{s_k}
\prod_{i\in A\setminus \{k\}}
\langle
z_i
\rangle^{s_i}
\cdot
K_{0}(
z_1, \dots , x-y_k, \dots, z_m)
\bigg\Vert
_{L^2(z_{A\setminus \{k\}})L^{2}(x)}
\, dy_k
\\
&
=
\norm{
\prod_{i\in A}
\langle
z_i
\rangle^{s_i}
\cdot
K_{0}(
z_1, \dots, z_m)
}
_{L^2(z_{A})}
=
\norm{\sigma_{0}}_{W^{(s_1,\dots,s_m)}}.
\end{align*}
Thus, by the assumption \eqref{equ:normalize},
\begin{equation}\label{equ:025}
\big\|{h^{(k,0)}}\big\|_{L^2({\mathbb R}^n)}
\le
1.
\end{equation}

On the other hand,
using the vanishing moment condition of $a_k$ and Taylor's formula,
we can write $g_{0}(x)$ as
$$
\begin{aligned}
g_0(x)
 \, =
&\,
\sum_{{\left\vert{\alpha}\right\vert}=N_k}
C_{\alpha}
\int_{\mathbb R^{mn}}
\Big\{
\int_0^1
(1-t)^{N_k-1}
\\
&\times
\partial^{\alpha}_{k}
K_0
\Big(x-y_1,\ldots,x^{t}_{c_k,y_k},
\ldots, x-y_m
\Big)\\
&\times(y_k-c_k)^{\alpha}
a_1(y_1)\cdots a_l(y_l)
f_{l+1}(y_{l+1})\cdots f_m(y_m)
\, dt
\Big\}
\, dy_1\cdots dy_m,
\end{aligned}
$$
where
$\partial^{\alpha}_{k}
K_0 (z_1,\dots, z_m)
=
\partial_{z_k}^{\alpha}K_0 (z_1,\dots, z_m)$
and
$x^{t}_{c_k,y_k}=x-c_k-t(y_k-c_k)$.
Hence the following inequality holds for $x\in E_J$:
\begin{align*}
&
\prod_{i\in J}\langle x- c_i \rangle^{s_i}\cdot
\abs{g_0(x)}
\\
&\lesssim
\sum_{{\left\vert{\alpha}\right\vert}=N_k}
\int_{\mathbb R^{mn}}
\Big\{
\int_0^1
\langle x^{t}_{c_k,y_k}\rangle^{s_k}
\prod_{i\in J\setminus \{k\}}
\langle x- y_i \rangle^{s_i}
\\
&\times
\abs{
\partial^{\alpha}_{k}K_0
\Big(x-y_1,\ldots,x^{t}_{c_k,y_k},
\ldots, x-y_m
\Big)
}
\\
&\times
\ell (Q_k)^{N_k}
\prod_{i\in \one}\abs{Q_i}^{-1/p_i}
\chi_{Q_i}(y_i)
\cdot
\prod_{i\in \two}
\abs{f_i (y_i)}
\, dt
\Big\}
\, dy_1\cdots dy_m.
\end{align*}
Using this inequality and arguing
in the same way as before, we
obtain the following estimate for $x\in E_{J}$:
\begin{equation}\label{equ:026}
\begin{split}
&\prod_{i\in J}\langle x-c_i \rangle^{s_i}
\cdot \abs{g_{0}(x)}
\\
&\lesssim
h^{(k,1)}(x)
\prod_{i\in J}\abs{Q_i}^{-1/p_i +1}
\cdot
\prod_{i\in \one \setminus J}\abs{Q_i}^{-1/p_i}
\cdot
\prod_{i\in \two }M_{q}(f_i)(x),
\end{split}
\end{equation}
where
\begin{align*}
&h^{(k,1)}(x)
=\abs{Q_k}^{-1+N_k/n}
\sum_{\abs{\alpha}=N_k}
\int_{\substack{
0<t<1
\\
y_k\in Q_k
}}
\\
&
\bigg\Vert
\langle x^{t}_{c_k,y_k} \rangle^{s_k}
\prod_{i\in A\setminus \{k\}}
\langle
z_i
\rangle^{s_i}
\cdot
\partial_{k}^{\alpha}K_{0}(
z_1, \dots , x^{t}_{c_k,y_k}, \dots, z_m)
\bigg\Vert
_{L^2(z_{A\setminus \{k\}})}
\, dtdy_k.
\end{align*}
Using Lemma \ref{lem:LInfL2}, we obtain
\begin{equation}\label{equ:027}
\norm{h^{(k,1)}}_{L^2({\mathbb R}^n)}
\lesssim
\abs{Q_k}^{N_k/n}.
\end{equation}

From the two estimates \eqref{equ:024} and \eqref{equ:026}, we
obtain
\begin{align*}
\abs{g_{0}(x)}
\lesssim
&\prod_{i\in J}\langle x-c_i \rangle^{-s_i}
\abs{Q_i}^{-1/p_i +1}
\cdot
\prod_{i\in \one \setminus J}\abs{Q_i}^{-1/p_i}
\cdot
\prod_{i\in \two }M_{q}(f_i)(x)
\\
&\times
\min \{h^{(k,0)}(x),\, h^{(k,1)}(x)\}
\end{align*}
for all $x\in E_J$ and all $k\in J$.
We take positive numbers $(\beta_{k})_{k\in J}$
satisfying $\sum_{k\in J}\beta_k =1$ and
take a geometric mean of the above estimates to obtain
\begin{equation*}
\abs{g_{0}(x)}
\chi_{E_J}(x)
\lesssim
\prod_{k\in J}
u^{J}_{k}(x)
\cdot
\prod_{i\in \one \setminus J}\abs{Q_i}^{-1/p_i}
\chi_{Q_i^{\ast}}(x)
\cdot
\prod_{i\in \two }M_{q}(f_i)(x),
\end{equation*}
where
$$
u^{J}_{k}(x)
=
\langle x-c_k \rangle^{-s_k}
\abs{Q_k}^{-1/p_k +1}\chi_{(Q_k^{\ast})^{c}}(x)
\left(
\min \{h^{(k,0)}(x),\, h^{(k,1)}(x)\}
\right)^{\beta_k}.
$$
We choose $\beta_k$, $k\in J$, so that we have
$$
\beta_k >0,
\quad
\frac{s_k}{n}>\frac{1}{p_k}- \frac{\beta_{k}}{2},
\quad \sum_{k\in J}\beta_k =1.
$$
This is possible since
$1/2 > \sum_{k\in J}\max \{0, {1}/{p_k} - {s_k}/{n}\}$
by virtue of our condition \eqref{equ:SIndexCond}.
If we write ${1}/{p_k} - {\beta_k}/{2}=1/r_k$, then
$r_k>0$ and H\"older's
inequality gives
\begin{align*}
\norm{u^{J}_{k}}_{L^{p_k}}
\le
&
\norm{
\langle x-c_k \rangle ^{-s_k}
\abs{Q_k}^{-1/p_k +1}\chi_{(Q_k^{\ast})^{c}}(x)
}_{L^{r_k}}
\\
&
\qquad \qquad \times
\norm{
\left(
\min \{h^{(k,0)}(x),\, h^{(k,1)}(x)\}
\right)^{\beta_k}
}_{L^{2/\beta_k}}.
\end{align*}
Since $s_k r_k>n$, we have
\begin{equation*}
\norm{
\langle x-c_k \rangle ^{-s_k}
\abs{Q_k}^{-1/p_k +1}\chi_{(Q_k^{\ast})^{c}}(x)
}_{L^{r_k}}
\approx
\begin{cases}
{\abs{Q_k}^{-1/p_k +1}} & {\quad\text{if $\abs{Q_k}\le 1$}\quad }
\\
{\abs{Q_k}^{-1/p_k +1-s_k/n + 1/r_k}}
& {\quad\text{if $\abs{Q_k}> 1$. }\quad }
\end{cases}
\end{equation*}
By \eqref{equ:025} and \eqref{equ:027}, we have
\begin{align*}
\norm{
\left(
\min \{h^{(k,0)}(x),\, h^{(k,1)}(x)\}
\right)^{\beta_k}
}_{L^{2/\beta_k}}
&\le
\min \left\{
\norm{h^{(k,0)}}_{L^2}^{\beta_k},\,
\norm{h^{(k,1)}}_{L^2}^{\beta_k}
\right\}
\\
&\lesssim
\begin{cases}
{\abs{Q_k}^{N_k \beta_k/n}} &
{\quad\text{if $\abs{Q_k}\le 1$}\quad }
\\
{1}
& {\quad\text{if $\abs{Q_k}> 1$.}\quad }
\end{cases}
\end{align*}
Thus
$$
\norm{
u^{J}_{k}
}_{L^{p_k}}
\lesssim
\begin{cases}
{\abs{Q_k}^{N_k \beta_k/n - 1/p_k +1}}
& {\quad\text{if $\abs{Q_k}\le 1$}\quad }
\\
{\abs{Q_k}^{-1/p_k +1-s_k/n + 1/r_k}}
& {\quad\text{if $\abs{Q_k}> 1$,}\quad }
\end{cases}
$$
which implies \eqref{equ:020} for $j=0$
with
$\gamma_k= N_k \beta_k - n/p_k +n$ and
$\delta_k=n/p_k -n+s_k - n/r_k$.
We have $\gamma_k>0$ since $N_k$ is sufficiently large
and $\delta_{k}>0$ since
$\delta_{k}=n \beta_k/2 -n +s_k\ge n \beta_k/2-n/p_k +s_k >0$
by our choice of $\beta_k$.
This completes the proof of Lemma \ref{lem:KeyLem2}.
\end{proof}

\begin{proof}[Proof of Lemma \ref{lem:KeyLem3}]
Since the proof is similar to that of Lemma \ref{lem:KeyLem2},
we shall briefly indicate only the key points.
We   use the same notation as in the proof of
Lemma \ref{lem:KeyLem2}.
We also write
$$
G(x) =GT_{\sigma}(a_1, \dots, a_l, f_{l+1}, \dots, f_m)(x)
=\bigg(
\sum_{j\in \mathbb Z}
\abs{g_j (x)}^{2}
\bigg)^{1/2}.
$$
It is sufficient to prove the estimate
\begin{equation}\label{equ:028}
G(x) \chi_{E_J}(x)
\lesssim
\prod_{i\in \one} b^{J}_{i}(x)\cdot
\prod_{i\in \two} \widetilde{f}_{i}(x)
\end{equation}
for each subset $J\subset \one$, where
$b^{J}_{i}$ and $\widetilde{f}_{i}$ have the
same properties as in \eqref{equ:009}.

First we consider the case $J=\emptyset$,
$E_{\emptyset}=Q_{1}^{\ast}\cap \cdots \cap Q_{l}^{\ast}$.
We divide the proof into the following three cases, (1)--(3),
depending on the index $k_1$ involved in the assumption
\eqref{equ:support} with $k_2=m+1$.

(1) $k_1\in \one$.
Without loss of generality,
we assume $k_1=1$.
We can write
$$
g_{j}
=T_{\sigma_{j}}
(\Delta_{j}a_1, a_2, \dots, a_l, f_{l+1}, \dots, f_{\rho}, \dots, f_{m}).
$$
By Lemma \ref{lem:Tomita}, we have
\begin{equation*}
\abs{g_j}
\lesssim
M_{q}(\Delta_{j}a_1)
M_{q}(a_2)
\cdots
M_{q}(a_l)
M_{q}(f_{l+1})
\cdots
M_{q}(f_\rho).
\end{equation*}
Hence
\begin{equation*}
G
\lesssim
\bigg(
\sum_{j\in \mathbb Z}
\{M_{q}(\Delta_{j}a_1)\}^{2}
\bigg)^{1/2}
M_{q}(a_2)
\cdots
M_{q}(a_l)
M_{q}(f_{l+1})
\cdots
M_{q}(f_\rho).
\end{equation*}
Thus we obtain \eqref{equ:028} for $J=\emptyset$ with
\begin{align*}
&
b^{\emptyset}_{1}=
\bigg(
\sum_{j\in \mathbb Z}
\{M_{q}(\Delta_{j}a_1)\}^{2}
\bigg)^{1/2}
\chi_{Q_1^{\ast}},
\\
&
b^{\emptyset}_{i}=
M_{q}(a_i)
\chi_{Q_i^{\ast}}
\quad \text{for}\quad 2\le i\le l,
\\
&
\widetilde{f}^{\emptyset}_{i}=
M_{q}(f_i)
\quad \text{for}\quad l+1\le i\le \rho.
\end{align*}

(2) $k_1\in \two$.
Without loss of generality,
we assume $k_1=l+1$.
We can write
$$
g_{j}
=T_{\sigma_{j}}
(a_1, \dots, a_l, \Delta_{j}f_{l+1}, f_{l+2}, \dots, f_{\rho}, \dots, f_{m}).
$$
By Lemma \ref{lem:Tomita}, we have
\begin{equation*}
\abs{g_j}
\lesssim
M_{q}(a_1)
\cdots
M_{q}(a_l)
M_{q}(\Delta_{j}f_{l+1})
M_{q}(f_{l+2})
\cdots
M_{q}(f_\rho).
\end{equation*}
Hence
\begin{equation*}
G
\lesssim
M_{q}(a_1)
\cdots
M_{q}(a_l)
\bigg(
\sum_{j\in \mathbb Z}
\{M_{q}(\Delta_{j}f_{l+1})\}^{2}
\bigg)^{1/2}
M_{q}(f_{l+2})
\cdots
M_{q}(f_\rho).
\end{equation*}
Thus we obtain \eqref{equ:028} for $J=\emptyset$ with
\begin{align*}
&
b^{\emptyset}_{i}=
M_{q}(a_i)
\chi_{Q_i^{\ast}}
\quad \text{for}\quad 1\le i\le l,
\\
&
\widetilde{f}^{\emptyset}_{l+1}=
\bigg(
\sum_{j\in \mathbb Z}
\{M_{q}(\Delta_{j}f_{l+1})\}^{2}
\bigg)^{1/2},
\\
&
\widetilde{f}^{\emptyset}_{i}=
M_{q}(f_i)
\quad \text{for}\quad l+2\le i\le \rho.
\end{align*}

(3) $k_1\in \three$.
Without loss of generality,
we assume $k_1=\rho+1$.
We can write
$$
g_{j}
=T_{\sigma_{j}}
(a_1, \dots, a_l, f_{l+1}, \dots,
f_{\rho}, \Delta_{j}f_{\rho+1}, \dots, f_{m}).
$$
Lemma \ref{lem:Tomita} yields
\begin{equation*}
\abs{g_j}
\lesssim
(\zeta_{j}\ast \abs{a_1}^q )^{1/q}
M_{q}(a_2)\cdots
M_{q}(a_l)
M_{q}(f_{l+1})
\cdots
M_{q}(f_\rho)
(\zeta_{j}\ast \abs{\Delta_{j}f_{\rho+1}}^q )^{1/q}.
\end{equation*}
Hence
\begin{equation*}
G
\lesssim
\bigg(
\sum_{j\in \mathbb Z}
(\zeta_{j}\ast \abs{a_1}^q )^{2/q}
(\zeta_{j}\ast \abs{\Delta_{j}f_{\rho+1}}^q )^{2/q}
\bigg)^{1/2}
M_{q}(a_2)\cdots M_{q}(a_l)
M_{q}(f_{l+1})
\cdots
M_{q}(f_\rho).
\end{equation*}
Thus we obtain \eqref{equ:028} for $J=\emptyset$ with
\begin{align*}
&
b^{\emptyset}_{1}=
\bigg(
\sum_{j\in \mathbb Z}
(\zeta_{j}\ast \abs{a_1}^q )^{2/q}
(\zeta_{j}\ast \abs{\Delta_{j}f_{\rho+1}}^q )^{2/q}
\bigg)^{1/2}\chi_{Q_1^{\ast}},
\\
&
b^{\emptyset}_{i}=
M_{q}(a_i)
\chi_{Q_i^{\ast}}
\quad \text{for}\quad 2\le i\le l,
\\
&
\widetilde{f}^{\emptyset}_{i}=
M_{q}(f_i)
\quad \text{for}\quad l+1\le i\le \rho.
\end{align*}

Finally we prove \eqref{equ:028} for $J\neq \emptyset$.
The proof is immediate.
Observe that
the estimate
of $g_{j}(x)$ on $E_{J}$, $J\neq \emptyset$,
given in the latter half of
the proof of Lemma \ref{lem:KeyLem2}
holds in the present case as well,
since we did not use the assumption
\eqref{equ:support} in that argument.
Also observe that there we have actually proved the estimate
\begin{equation*}
\sum_{j\in \mathbb Z} \abs{g_j (x)}
\chi_{E_{J}}(x)
\lesssim
b^{J}_{1}(x)\cdots
b^{J}_{l}(x)
\widetilde{f}^{J}_{l+1}(x)
\cdots
\widetilde{f}^{J}_{\rho}(x)
\end{equation*}
for $J\neq \emptyset$.
Thus
the estimate \eqref{equ:028} for $J\neq \emptyset$
also holds since
$$
G(x)
=\bigg(\sum_{j\in \mathbb Z} \abs{g_j (x)}^{2}
\bigg)^{1/2}
\le
\sum_{j\in \mathbb Z} \abs{g_j (x)}.
$$
This completes the proof of Lemma \ref{lem:KeyLem3}.
\end{proof}

\section{The space $L^1$ and weak type estimates}\label{section:WeakEstimate}
In this section, we   prove that if we replace $H^1$ by $L^1$, then
we obtain the weak type estimate for $T_{\sigma}$
under the same regularity assumption on the multipliers.
Precisely, we   prove the following theorem.

\begin{thm}\label{thm:WeakEstimate}
Let $s_1, \dots, s_m$, $p_1,\ldots,p_m$, and $p$ satisfy
the same assumptions as in Theorem \ref{thm:General}.
Define $X_i$, $i=1,\dots, m$, by
$X_i=H^{p_i}$ if $p_i \neq 1$ and
$X_i=L^1$ if $p_i = 1$.
Then
\begin{equation}\label{equ:TSigmWeakBound}
\norm{T_{\sigma}}_{X_1\times\cdots\times X_m
\longrightarrow  L^{(p,\infty)}}
\lesssim
\sup_{j\in\mathbb Z}
{\left\Vert\sigma(2^j\cdot)\widehat\psi\,
\right\Vert}_{W^{(s_1,\ldots,s_m)}}.
\end{equation}
The conditions given above are optimal
in the sense that if \eqref{equ:TSigmWeakBound}
holds
then
we must have
$s_1, \dots, s_m \ge n/2$ and \eqref{equ:SIndexCondGe}
for every nonempty subset $J\subset\left\{1,2,\ldots,m\right\}$.
\end{thm}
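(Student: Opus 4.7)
The necessity of $s_i\ge n/2$ and \eqref{equ:SIndexCondGe} follows from the same counterexamples used in \cite[Theorem 5.1]{GraHan} for the strong-type bound, since the extremal inputs constructed there live in $L^{p_i}$ and produce unbounded contributions even in $L^{p,\infty}$.

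For sufficiency, the plan is to reduce to Theorem \ref{thm:General} via a multilinear Calder\'on--Zygmund decomposition of the $L^1$-inputs. After relabeling, assume $p_1=\cdots=p_r=1$ (so $X_i=L^1$) and $p_i\ne 1$ for $i>r$. Normalize $\norm{f_i}_{X_i}=1$, $A=1$, and fix $\alpha>0$; the target is $\abs{\set{\abs{T_\sigma(\vec f)}>c\alpha}}\lesssim \alpha^{-p}$. For $i\le r$, CZ-decompose $f_i$ at height $\lambda:=\alpha^p$, writing $f_i=g_i+b_i$ with $b_i=\sum_k b_{i,k}$, $\supp b_{i,k}\subset Q_{i,k}$, $\int b_{i,k}=0$, $\norm{b_{i,k}}_{L^1}\lesssim \lambda\abs{Q_{i,k}}$, $\sum_k \abs{Q_{i,k}}\lesssim \alpha^{-p}$, $\norm{g_i}_{L^\infty}\lesssim\lambda$, and $\norm{g_i}_{L^1}\le 1$. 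Setting $\Omega=\bigcup_{i\le r}\bigcup_k Q^*_{i,k}$ gives $\abs{\Omega}\lesssim \alpha^{-p}$, so it suffices to control $T_\sigma(\vec f)$ on $\Omega^c$. Expanding multilinearly, $T_\sigma(\vec f)=\sum_{S\subset\set{1,\ldots,r}}T_\sigma(h^S)$, where $h^S_i=b_i$ for $i\in S$, $h^S_i=g_i$ for $i\in\set{1,\ldots,r}\setminus S$, and $h^S_i=f_i$ for $i>r$.

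For the purely good term $S=\emptyset$, choose $q_i>1$ slightly above $1$ for $i\le r$; since $1/q_i<1/p_i=1$, the strict Sobolev condition \eqref{equ:SIndexCond} is preserved, and Theorem \ref{thm:General} supplies the strong bound $L^{q_1}\times\cdots\times L^{q_r}\times X_{r+1}\times\cdots\times X_m\to L^q$ with $1/q=\sum_{i\le r}1/q_i+\sum_{i>r}1/p_i$. The log-convexity estimate $\norm{g_i}_{L^{q_i}}\le \norm{g_i}_{L^1}^{1/q_i}\norm{g_i}_{L^\infty}^{1-1/q_i}\lesssim \alpha^{p(1-1/q_i)}$ combined with the identity $p\sum_{i\le r}(1-1/q_i)=1-p/q$ gives $\norm{T_\sigma(g_1,\ldots,g_r,f_{r+1},\ldots,f_m)}_{L^q}\lesssim \alpha^{1-p/q}$, so Chebyshev produces $\alpha^{-q}\cdot \alpha^{q-p}=\alpha^{-p}$, as required.

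For $S\ne\emptyset$, I adapt Lemma \ref{lem:KeyLem1}. Rescale $\tilde a_{i,k}=b_{i,k}/(\lambda\abs{Q_{i,k}})$ for $i\in S$; these are $L^1$-normalized atoms ($\int\tilde a_{i,k}=0$, $\supp\tilde a_{i,k}\subset Q_{i,k}$, $\norm{\tilde a_{i,k}}_{L^1}\le C$). At the endpoint $p_i=1$ the $L^\infty$-atom bound $\norm{a}_{L^\infty}\le \abs{Q}^{-1}$ used in the proof of Lemma \ref{lem:KeyLem1} enters only through the aggregated control $\int\abs{a}\le 1$, which is exactly what the $L^1$-normalization provides; the Cauchy--Schwarz and Taylor arguments of that proof therefore carry over essentially unchanged, producing on $\Omega^c$ a pointwise bound
\[
\abs{T_\sigma(h^S)(x)}\lesssim \prod_{i\in S}\Big(\sum_{k_i}\lambda\abs{Q_{i,k_i}}U_{i,k_i}(x)\Big)\cdot \prod_{\substack{i\le r\\i\notin S}}\norm{g_i}_{L^\infty}\cdot \prod_{i>r}F_i(x),
\]
with $\norm{U_{i,k_i}}_{L^1}\lesssim 1$ and each $F_i$ an appropriate maximal-type majorant of $f_i$ with $\norm{F_i}_{L^{p_i}}\lesssim 1$. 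Since $\sum_{k_i}\lambda\abs{Q_{i,k_i}}\lesssim 1$, the atomic sums are $L^1$-bounded by a constant; combining via H\"older with $L^{q_i}$-control of the $g_i$-factors (as in the case $S=\emptyset$) and applying Chebyshev yields the weak bound $\alpha^{-p}$ for each nonempty $S$, completing the argument. The principal obstacle is precisely this adaptation of Lemma \ref{lem:KeyLem1} to $L^1$-normalized atoms at $p_i=1$; the strict inequality \eqref{equ:SIndexCond} supplies the requisite slack in the exponents and degenerates exactly to the necessary condition $s_i\ge n/2$ at this endpoint.
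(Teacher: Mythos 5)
Your approach is genuinely different from the paper's. The paper proves a single-variable linear interpolation result (Lemma \ref{lem:Stein}, a Stein-style weak-type interpolation at $L^1$ via one-dimensional Calder\'on--Zygmund decomposition of the frozen operator $f_1\mapsto T_\sigma(f_1,f_2,\dots,f_m)$), applies Theorem \ref{thm:General} at the nearby exponents $1\pm\epsilon$, and iterates over the $L^1$-slots one at a time; this is fully modular and reuses the strong-type theorem as a black box. You instead run a direct multilinear CZ decomposition at height $\lambda=\alpha^p$ on all $L^1$-inputs simultaneously, handle the all-good term via the strong bound at $q_i>1$ with a correct exponent count, and bound the bad terms pointwise off the exceptional set by adapting Lemma \ref{lem:KeyLem1}.

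Your exponent arithmetic is right (in the all-good and all-bad cases the Chebyshev estimate gives exactly $\alpha^{-p}$), your observation that at $p_i=1$ the $L^\infty$-normalization $|Q_i|^{-1/p_i}$ degenerates to $|Q_i|^0=1$ and can be replaced by the $L^1$-normalization $\int|a_i|\lesssim 1$ is valid, and the CZ cancellation $N_i=1$ is enough since $\gamma_k=N_k\beta_k>0$ at $p_k=1$. However, the sentence ``the Cauchy--Schwarz and Taylor arguments of that proof therefore carry over essentially unchanged'' undersells what remains. Lemma \ref{lem:KeyLem1} is stated only for the configuration (atoms)$\times$($L^\infty$); your setting also has (i) the good parts $g_i$, which are bounded but \emph{not} compactly supported, so they cannot be placed in the atom slots and must be fed into the ``$L^\infty$'' slots of a suitably rearranged variant; and (ii) indices $i>r$ with $p_i\in(1,\infty)$, for which a maximal-type majorant with $\|F_i\|_{L^{p_i}}\lesssim 1$ is exactly the content of the square-function machinery in Lemmas \ref{lem:KeyLem2}--\ref{lem:KeyLem3} (cone decomposition of the symbol, $\Delta_j$ and $M_q$, Carleson-measure estimates), not of Lemma \ref{lem:KeyLem1}. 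You would therefore need to merge and re-derive a hybrid of Lemmas \ref{lem:KeyLem1}--\ref{lem:KeyLem3} tailored to $L^1$-atoms, $L^\infty$-good parts, and $H^{p_i}/L^{p_i}$ inputs all at once, and then track the summation over the CZ cubes. That is a substantial piece of new work, not an ``essentially unchanged'' re-run; by contrast, the paper's Lemma \ref{lem:Stein} avoids all of it at the cost of a short $\epsilon$-perturbation of the already-established strong bounds.
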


The proof depends on the following lemma, which is
a slight generalization of the remark given in Stein \cite[5.24]{Stein}.

\begin{lem}\label{lem:Stein}
Let $p_0, p_1, q_0, q_1, r$ satisfy
$n/(n+1)<p_0 < 1 < p_1 < \infty$,
$0<q_0<r< q_1<\infty$,
and $1/p_0  - 1/q_0=1/p_1 - 1/q_1= 1 - 1/r $.
Let $T$ be a linear mapping of
$L^1 ({\mathbb R}^n)$ into $\mathcal{M}({\mathbb R})^n$,
the space of all measurable functions on ${\mathbb R}^n$.
Assume the estimates
\begin{align}
&\norm{Tf}_{L^{(q_0,\infty)}}
\le M_0 \norm{f}_{H^{p_0}},
\label{equ:WeakHp0Bound}
\\
&\norm{Tf}_{L^{(q_1,\infty)}}
\le M_1 \norm{f}_{L^{p_1}}
\label{equ:WeakLp1Bound}
\end{align}
holds for all $f\in L^1 ({\mathbb R}^{n})$ with
the right hand sides finite, where $M_0$ and $M_1$ are
positive constants.
Then
\begin{equation*}
\norm{Tf}_{L^{(r,\infty)}}
\le C M_{0}^{1-\theta}M_{1}^{\theta}
\norm{f}_{L^1}
\end{equation*}
for all $f\in L^1 (\mathbb R^n)$, where
$C$ is a constant
depending only on
$p_0, p_1, q_0, q_1, r$, and $n$,
and $\theta$ is given by
$1=(1-\theta)/p_0 + \theta / p_1$.
\end{lem}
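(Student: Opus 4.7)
The plan is to follow the Marcinkiewicz-type interpolation argument of Stein \cite[5.24]{Stein}, combining the Calder\'on--Zygmund decomposition at a height $\alpha$ (to be optimized) with the two hypothesized weak-type estimates. For $f\in L^1$ and $\alpha>0$, I would decompose $f=g+b$ via the CZ decomposition so that $\norm{g}_{L^\infty}\lesssim \alpha$, $\norm{g}_{L^1}\le \norm{f}_{L^1}$, and $b=\sum_k b_k$ with $\supp b_k\subset Q_k$, $\int b_k=0$, $\norm{b_k}_{L^1}\lesssim \alpha\abs{Q_k}$, and $\sum_k\abs{Q_k}\lesssim \norm{f}_{L^1}/\alpha$. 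Sublinearity then gives
\[
\abs{\set{\abs{Tf}>\lambda}}\le \abs{\set{\abs{Tg}>\lambda/2}}+\abs{\set{\abs{Tb}>\lambda/2}}.
\]

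Since $\norm{g}_{L^{p_1}}^{p_1}\le \norm{g}_{L^\infty}^{p_1-1}\norm{g}_{L^1}\lesssim \alpha^{p_1-1}\norm{f}_{L^1}$, hypothesis \eqref{equ:WeakLp1Bound} yields
\[
\abs{\set{\abs{Tg}>\lambda/2}}\lesssim M_1^{q_1}\lambda^{-q_1}\alpha^{q_1(1-1/p_1)}\norm{f}_{L^1}^{q_1/p_1}.
\]
For the bad part the assumption $p_0>n/(n+1)$ is crucial: in this range an atom for $H^{p_0}$ requires only the vanishing integral (no higher-order moments), so each $b_k/(C\alpha\abs{Q_k}^{1/p_0})$ qualifies as an $H^{p_0}$-atom (using $L^1$- or $L^q$-atom normalization as convenient). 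The $p_0$-subadditivity of the $H^{p_0}$ quasi-norm then gives
\[
\norm{b}_{H^{p_0}}^{p_0}\lesssim \alpha^{p_0}\sum_k \abs{Q_k}\lesssim \alpha^{p_0-1}\norm{f}_{L^1},
\]
and hypothesis \eqref{equ:WeakHp0Bound} produces
\[
\abs{\set{\abs{Tb}>\lambda/2}}\lesssim M_0^{q_0}\lambda^{-q_0}\alpha^{q_0(1-1/p_0)}\norm{f}_{L^1}^{q_0/p_0}.
\]

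The last step is to optimize in $\alpha$. The common scaling identity $1/p_i-1/q_i=1-1/r$ rewrites as $q_i(1-1/p_i)=q_i/r-1$ for both $i=0,1$, so the choice $\alpha=c\lambda^r\norm{f}_{L^1}^{1-r}$ simultaneously collapses the $\lambda$-exponent to $-r$ and converts the $\norm{f}_{L^1}$-exponent to $r$ in both estimates. Solving for the $c$ that equates the good- and bad-part bounds gives $c=(M_0^{q_0}/M_1^{q_1})^{r/(q_1-q_0)}$, and the resulting combined constant simplifies, via the standard interpolation identities $r(1-\theta)=q_0(q_1-r)/(q_1-q_0)$ and $r\theta=q_1(r-q_0)/(q_1-q_0)$, to $CM_0^{r(1-\theta)}M_1^{r\theta}\norm{f}_{L^1}^r\lambda^{-r}$; taking $\sup_{\lambda>0}$ then yields the claim. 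The main obstacle, apart from the careful exponent bookkeeping just sketched, is to justify that each $b_k$ can be treated as a genuine $H^{p_0}$-atom, for which the hypothesis $p_0>n/(n+1)$ is precisely what is required; a minor additional point to verify is that $g\in L^1\cap L^{p_1}$ and $b\in L^1\cap H^{p_0}$, so that the hypothesized weak-type bounds actually apply to each piece.
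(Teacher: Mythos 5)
Your proposal is correct and follows essentially the same Calder\'on--Zygmund argument as the paper: decompose $f=g+b$ at an appropriately scaled height, treat $g$ via the weak $L^{p_1}\to L^{(q_1,\infty)}$ hypothesis, treat $b$ as a sum of $H^{p_0}$-atoms (possible precisely because $p_0>n/(n+1)$ requires only the vanishing mean), and optimize the cutoff height. The only cosmetic difference is that you carry $\norm{f}_{L^1}$ through the estimates while the paper normalizes $\norm{f}_{L^1}=1$; otherwise the exponent bookkeeping and the choice of CZ height are the same.
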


\begin{proof}
Let $f\in L^1 (\mathbb R^n)$ and
we assume $\norm{f}_{L^1}=1$.
Let $0<\lambda < \infty$ be given.
We apply the Calder\'on-Zygmund decomposition to $f$
at height $A \lambda^{r}$,
where $A$ is a positive constant to be determined later.
Thus we obtain a family of disjoint cubes $\{Q_j\}$ such that
\begin{align*}
&
A \lambda^{r} <
\frac{1}{\abs{Q_j}}
\int_{Q_j} \abs{f(x)}\, dx
\le 2^{n} A \lambda^{r},
\\
&
\abs{f(x)} \le A \lambda^{r}
\quad \text{for a.\,e.} \quad x \not\in \bigcup_{j}Q_j,
\\
&
\sum_{j}
\abs{Q_j}
\le \left(A \lambda^{r} \right)^{-1},
\end{align*}
and we write $f= g + b$, $b=\sum_{j} b_j$ with
\begin{equation*}
b_j(x) = \left( f(x) - f_{Q_j} \right)  \chi_{Q_j}(x),
\quad
f_{Q_j}= \frac{1}{\abs{Q_j}}
\int_{Q_j} f(x)\, dx.
\end{equation*}
For $g$, we have
\begin{equation*}
\norm{g}_{L^{p_1}}^{p_1}
\le \norm{g}_{L^\infty}^{p_1 - 1}\norm{g}_{L^1}
\lesssim  \left(A \lambda^{r}\right)^{p_1 -1}.
\end{equation*}
Thus \eqref{equ:WeakLp1Bound} gives
\begin{align*}
&\abs{
\{x :\,\, \abs{Tg (x)} > \lambda\}
}
\le \left(
M_{1} \norm{g}_{L^{p_1}} \lambda^{-1}
\right)^{q_1}
\\
&\lesssim
\left(
M_1 (A \lambda^r)^{1-1/p_1} \lambda^{-1}
\right)^{q_1}
=
\left(M_1 A^{1-1/p_1}\right)^{q_1}
\lambda^{-r}.
\end{align*}
Each $b_j$ satisfies
\begin{equation*}
\mathrm{supp}\, b_j \subset Q_j,
\quad
\int b_j(x)\, dx=0,
\quad
\frac{1}{\abs{Q_j}}
\int_{Q_j} \abs{b_j (x)}\, dx \lesssim
A \lambda^{r},
\end{equation*}
and thus
$\abs{Q_j}^{-1/p_0}
\left(A \lambda^{r}\right)^{-1} b_j $ is a constant multiple of
an $L^1$-atom for $H^{p_0}$
since $n/(n+1)< p_0<1$.
Hence we have
\begin{equation*}
\norm{b}_{H^{p_0}} ^{p_0}
\lesssim
\sum_{j}
\left(
\abs{Q_j}^{1/p_0}
A \lambda^{r}
\right)^{p_0}
\le
\left(
A \lambda^{r}
\right)^{p_0}
\left(A\lambda^{r}\right)^{-1}
= \left(
A \lambda^{r}
\right)^{p_0-1}.
\end{equation*}
Thus \eqref{equ:WeakHp0Bound} gives
\begin{align*}
&\abs{
\{x :\,\, \abs{Tb (x)} > \lambda\}
}
\le \left(
M_{0} \norm{b}_{H^{p_0}} \lambda^{-1}
\right)^{q_0}
\\
&\lesssim
\left(M_0 \left(A \lambda^{r} \right)^{1-1/p_0}
\lambda^{-1}
\right)^{q_0}
=\left(M_0 A^{1-1/p_0} \right)^{q_0}
\lambda^{-r}.
\end{align*}
Thus for $Tf =Tg + Tb$, combining the above estimates, we obtain
\begin{equation*}
\abs{
\{x:\,\, \abs{Tf (x)} > 2 \lambda\}
}
\lesssim
\left\{
\left(M_0 A^{1-1/p_0} \right)^{q_0}
+ \left(M_1 A^{1-1/p_1} \right)^{q_1}
\right\}
\lambda^{-r}.
\end{equation*}
We choose $A$ so that it minimizes the last expression,
and we obtain
\begin{equation*}
\abs{
\{x :\,\, \abs{Tf (x)} > 2 \lambda\}
}
\lesssim
\left(
M_0^{1-\theta} M_{1}^{\theta}
\lambda^{-1}
\right)^{r}.
\end{equation*}
This completes the proof of Lemma \ref{lem:Stein}.
\end{proof}

\begin{proof}[Proof of Theorem \ref{thm:WeakEstimate}]
Suppose $s_1, \dots, s_m$ and $p_1, \dots, p_m$ satisfy the assumptions of the theorem
and suppose for example $p_1=1$.
If we take $\epsilon>0$ sufficiently small,
then $s_1, \dots, s_m$ also satisfy the assumptions of the theorem
with $p_1=1$ replaced by $1 \pm \epsilon$.
Thus Theorem \ref{thm:General} yields
two estimates
\begin{align*}
&\norm{T_{\sigma}(f_1, f_2, \dots, f_m)
}_{L^{(p_{-},\infty)}}
\lesssim
A \norm{f_1}_{H^{1-\epsilon}}\norm{f_2}_{H^{p_2}}\cdots \norm{f_m}_{H^{p_m}},
\\
&\norm{T_{\sigma}(f_1, f_2, \dots, f_m)
}_{L^{(p_{+},\infty)}}
\lesssim
A \norm{f_1}_{L^{1+\epsilon}}\norm{f_2}_{H^{p_2}}\cdots \norm{f_m}_{H^{p_m}},
\end{align*}
where
$A=\sup_{j\in \mathbb Z} \norm{\sigma (2^j \cdot)\widehat{\psi}}
_{W^{(s_1, \dots, s_m)}}$ and
$p_{\pm}$ is given by
$1/(1 \pm \epsilon) + 1/p_2 + \dots + 1/p_m =1/p_{\pm}$.
We freeze the functions $f_2, \dots, f_m$ and
apply Lemma \ref{lem:Stein} to the linear operator
$f_{1}\mapsto T_{\sigma}(f_1, f_2, \dots, f_m)$ to obtain
\begin{equation*}
\norm{
T_{\sigma}(f_1, f_2, \dots, f_m)
}_{  L^{(p,\infty)}   }
\lesssim
A \norm{f_1}_{L^{1}} \norm{f_2}_{H^{p_2}} \cdots \norm{f_m}_{H^{p_m}}.
\end{equation*}
Repeated application of the same argument gives the desired weak type estimate.

 The necessity of the conditions
$s_i \ge n/2$ and \eqref{equ:SIndexCondGe} can be shown
by the same method as in \cite[Theorem 5.1]{GraHan}.
This completes the proof of Theorem \ref{thm:WeakEstimate}.
\end{proof}
\bibliographystyle{amsplain}

\end{document}